\title{The cohomology of the height four Morava stabilizer group at large primes.}
\date{June 2016.}
\begin{document}
\maketitle
\begin{abstract}
This is an announcement of some new computational methods in stable homotopy theory, in particular, methods for using the cohomology of small-height Morava stabilizer groups to compute the cohomology of large-height Morava stabilizer groups. As an application, the cohomology of the height four Morava stabilizer group is computed at large primes (its rank turns out to be $3440$). Consequently we are able to formulate a plausible conjecture on the rank of the large-primary cohomology of the Morava stabilizer groups at all heights. 
\end{abstract}
\tableofcontents

\section{Introduction.}

This is an announcement of some new computations in homotopy theory, in particular, the computation of the cohomology of the height four Morava stabilizer group scheme at primes greater than five, and the computation of the homotopy groups of the $K(4)$-local Smith-Toda complex $V(3)$ at the primes at which it exists. 
These computations are possible using some new ``height-shifting'' techniques in computational homotopy theory, which decompose ``higher-height'' families of elements in stable homotopy in terms of copies of ``lower-height'' families of elements.
More specifically, in this document I construct of a sequence of spectral sequences whose first spectral sequence has as its input the cohomology of the height $n$ Morava stabilizer group scheme, and whose last spectral sequence has as its output the cohomology of the height $2n$ Morava stabilizer group scheme, at large primes. In the case $n=2$ this set of computations is actually accomplished at primes greater than five. I give quite a bit of detail about the computations\footnote{The spectral sequence computations described in this document are of a really substantial scale (see Proposition~\ref{coh of E 4 4 2}, for example), and although these spectral sequences have been worked through and checked once, it would not be unreasonable for the reader to regard the results of these computations as provisional until they have been completely written up and checked by others.} in this announcement, but some parts involve lengthy computer calculations (the cohomology algebra I compute in the end has $\mathbb{F}_p$-rank $3,440$) and due to the difficulty in presenting these kinds of calculations in a paper, these parts are not yet typed up; I plan to update electronically-available copies of this document with those details as they are typed up. (Once they are typed up in their entirety, this document will cease to be an ``announcement'' and will become an actual ``paper.'') 

The computation of the cohomology of Morava stabilizer groups is known to be of central importance in computational stable homotopy theory, but until now, very little has been done at heights greater than $2$, and as far as I know, nothing has been done at heights greater than $3$; largely this seems to be because computations at each given height start ``from scratch,'' i.e., there have been no available techniques to use ``lower-height'' computations as input for ``higher-height'' computations. Such techniques are developed and used in this document, and result in some patterns in the dimensions of the cohomologies of the Morava stabilizer groups at large heights, which suggest a conjecture about how the cohomology of the Morava stabilizer group scheme of a given height is ``built from'' the cohomology of the Morava stabilizer group scheme at smaller heights; see Conjecture~\ref{recursion conjecture}.

Now here is a bit of explanation about what Morava stabilizer group schemes are, and why they matter.
Let $p$ be a prime number and let $\mathbb{G}$ be a one-dimensional formal group law of $p$-height $n$ over $\mathbb{F}_p$.
The automorphism group scheme $\Aut(\mathbb{G})$ is called the {\em $p$-primary height $n$ Morava stabilizer group scheme.}
The group scheme cohomology $H^*(\Aut(\mathbb{G}); \mathbb{F}_p)$ is the input
for several spectral sequences of interest in algebraic topology:
\begin{itemize}
\item Beginning with $H^*(\Aut(\mathbb{G}); \mathbb{F}_p)$, a sequence of $n$ Bockstein spectral sequences computes
$\Cotor_{BP_*BP}^{*,*}(BP_*,v_n^{-1}BP_*/I_n^{\infty})$, the height $n$ layer in the $E_1$-term of the chromatic spectral sequence, which in turn converges
to $\Cotor_{BP_*BP}^{*,*}(BP_*,BP_*) \cong H_{fl}^{*,*}(\mathcal{M}_{fg}; \mathcal{O})$, the flat cohomology of the moduli stack of one-dimensional formal groups over $\Spec \mathbb{Z}_{(p)}$;
this cohomology is, in turn, the $E_2$-term of the Adams-Novikov spectral sequence, which converges to $\pi_*(S)_{(p)}$, the $p$-local stable homotopy groups of spheres.
See~\cite{MR860042} for this material (chapter 6 for the Bockstein spectral sequences, chapter 5 for the chromatic spectral sequence, chapter 4 for the Adams-Novikov spetral sequences).
\item Beginning with $H^*(\Aut(\mathbb{G}); \mathbb{F}_p)$, a sequence of $n$ Bockstein spectral sequences computes
$H^*(\Aut(\mathbb{G}); E(\mathbb{G})_*)$, the cohomology of $\Aut(\mathbb{G})$ with coefficients in the homotopy groups of the Morava $E$-theory spectrum $E(\mathbb{G})_*$; this is, in turn, the input for a descent spectral sequence converging to $\pi_*(L_{K(n)}S)$, the homotopy groups of the $K(n)$-local sphere spectrum. 
The long exact sequences induced in homotopy by ``fracture squares'' relating $L_{K(n)}S, L_{E(n)}S, L_{E(n-1)}S$, and $L_{E(n-1)}L_{K(n)}S$ allow one
(in principle) to inductively compute $L_{E(n)}S$ from knowledge of $L_{K(0)}S, L_{K(1)}S, \dots , L_{K(n)}S$, and the chromatic convergence theorem of Hopkins and Ravenel (see~\cite{MR1192553}) tells us that the $p$-local sphere spectrum is weakly equivalent to the homotopy limit $\holim_n L_{E(n)}S$.
See~\cite{MR1333942} and~\cite{MR2030586} for this material.
Since $E(\mathbb{G})_* \cong \Gamma(\Def(\mathbb{G}))[u^{\pm 1}]$, where $\Gamma(\Def(\mathbb{G}))$ is the ring of global section of the Lubin-Tate moduli space of deformations of $\mathbb{G}$, the computation of $H^*(\Aut(\mathbb{G}); E(\mathbb{G})_*)$ is perhaps of some independent interest in number theory.  
\item If the $p$-local Smith-Toda complex $V(n-1)$ exists, then $H^*(\Aut(\mathbb{G}); \mathbb{F}_p)[v_n^{-1}]$ is the input for the $E(n)$-based Adams spectral sequence which converges to $\pi_*(L_{E(n)}V(n-1)) \cong \pi_*(L_{K(n)}V(n-1))$. 
\item More generally, if $X$ is a $p$-local type $n$ finite complex (i.e., $X$ is $E(n-1)$-acyclic but not $E(n)$-acyclic), then $E(n)_*(X)$ admits a finite filtration whose filtration quotients are each isomorphic to $\mathbb{F}_p$, and this filtration gives rise to a spectral sequence
whose input is a number of copies of $H^*(\Aut(\mathbb{G}); \mathbb{F}_p)$ and whose output is the $E_2$-term $\Cotor^{*,*}_{E(n)_*E(n)}(E(n)_*,E(n)_*(X))$ of the $E(n)$-based Adams spectral sequence which converges to $\pi_*(L_{E(n)} X)$.
\item There are further variations and combinations of these methods for computing $K(n)$-local, $E(n)$-local, or global stable homotopy groups of various finite $CW$-complexes.
\end{itemize}
So, in order to make computations of stable homotopy groups, it is very, very helpful to be able to compute $H^*(\Aut(\mathbb{G}); \mathbb{F}_p)$. 
This has been accomplished when $n=1$ (classical; see Theorem~6.3.21 of~\cite{MR860042}, but one can even extract it from~\cite{MR1697859} without any mention of homotopy theory at all), when $n=2$ (see Theorems~6.3.22 and~6.3.24 of~\cite{MR860042}, and Theorem~6.3.27 for the $p=2$ and $n=2$ answer modulo multiplicative extensions), and when $n=3$ and $p>3$ (see Theorem~6.3.34 of~\cite{MR860042}).

In this document, we construct some spectral sequences which are designed to be used in conjunction with Ravenel's spectral sequences, from section 6.3 of~\cite{MR860042}, to compute the cohomology of large-height Morava stabilizer group schemes in terms of lower-height Morava stabilizer group schemes. Here are the main steps from our Strategy~\ref{main strategy}:
\begin{description}
\item[From a height $n$ formal group to a height $n$ formal module] Let $n$ be a positive integer and let $p$ be a sufficiently large prime (for example, $p>5$ suffices when $n=2$, and for arbitrary $n$, $p>2n+1$ ``usually'' suffices; the bound on $p$ is so that the group scheme cohomology $H^*(\strictAut(\mathbb{G}_{1/n}) ; \mathbb{F}_p)$ of the height $n$ Morava stabilizer group scheme $\strictAut(\mathbb{G}_{1/n})$ is isomorphic to the cohomology of a certain DGA, as in section~6.3 of~\cite{MR860042}). Then there exists a spectral sequence whose input is a tensor product of $H^*(\strictAut(\mathbb{G}_{1/n}) ; \mathbb{F}_p)$ with an exterior algebra on $n^2$ generators, and whose
output is $H^*(\strictAut(\mathbb{G}^{\hat{\mathbb{Z}}_p\left[ \sqrt{p}\right]}_{1/n}) ; \mathbb{F}_p)$, the group scheme cohomology  of the strict automorphism group scheme of a $\hat{\mathbb{Z}}_p\left[ \sqrt{p}\right]$-height $n$ formal $\hat{\mathbb{Z}}_p\left[ \sqrt{p}\right]$-module over $\mathbb{F}_p$. 

(Conceptually, this is appealing: $H^*(\strictAut(\mathbb{G}^{\hat{\mathbb{Z}}_p\left[ \sqrt{p}\right]}_{1/n}) ; \mathbb{F}_p)$ is ``built from'' $2^{(n^2)}$ copies of $H^*(\strictAut(\mathbb{G}_{1/n}) ; \mathbb{F}_p)$. But for practical computations, this spectral sequence can be broken down into a sequence of $n$ intermediate spectral sequences which are much smaller and more tractable; the $n=2$ case of these intermediate spectral sequence calculations are Propositions~\ref{coh of E 4 3 0} and  \ref{coh of E 4 4 0} in the present paper.)
\item[From a height $n$ formal module to a height $2n$ formal group] We then use a ``height-shifting spectral sequence'' whose input is
the tensor product of $H^*(\strictAut(\mathbb{G}^{\hat{\mathbb{Z}}_p\left[ \sqrt{p}\right]}_{1/n}) ; \mathbb{F}_p)$ with an exterior algebra on $2n^2$ generators, and whose output is the cohomology of a certain differential graded algebra $\mathcal{E}(2n,2n,2n)$, which, if $p>2n+1$, is isomorphic to the cohomology of the restricted Lie algebra $PE^0\mathbb{F}_p[\strictAut(\mathbb{G}_{1/2n})]$ of primitives in the associated graded Hopf algebra of Ravenel's filtration on the profinite group ring $\mathbb{F}_p[\strictAut(\mathbb{G}_{1/2n})]$.
The cohomology of this restricted Lie algebra is, in turn, the input for Ravenel's May spectral sequence (see Theorem~\ref{ravenels may spectral sequences}, or see Theorem~6.3.4 of~\cite{MR860042}) which converges to the group scheme cohomology $H^*(\strictAut(\mathbb{G}_{1/2n}); \mathbb{F}_p)$ of the height $2n$ Morava stabilizer group scheme.

(Again, conceptually, this is appealing: $H^*(\strictAut(\mathbb{G}_{1/2n}) ; \mathbb{F}_p)$ is ``built from'' $2^{2(n^2)}$ copies of $H^*(\strictAut(\mathbb{G}^{\hat{\mathbb{Z}}_p\left[ \sqrt{p}\right]}_{1/n}) ; \mathbb{F}_p)$. But for practical computations, this spectral sequence can be broken down into a sequence of $2n$ intermediate spectral sequences which are, again, much smaller and more tractable; $n=2$ cases of these intermediate spectral sequence calculations are Propositions~\ref{coh of E 4 4 1} and  \ref{coh of E 4 4 2} in the present paper.)
\end{description}
A ``formal $A$-module'' is a formal group with complex multiplication by $A$; the definition is recalled in Definition~\ref{def of formal module}, and a few basic properties of formal modules are in the definitions and theorems immediately following Definition~\ref{def of formal module}.

Carrying out these computations in the case $n=2$ (i.e., to compute the cohomology of the height $4$ Morava stabilizer group scheme using, as input, the cohomology of the height $2$ Morava stabilizer group scheme, at primes $p>5$), we get:
\begin{itemize}
\item In Proposition~\ref{cohomology of ht 2 morava stab grp} we compute the cohomology of the height $2$ Morava stabilizer group scheme (which was already known; see e.g. Theorem~6.3.22 of~\cite{MR860042}); $H^*(\strictAut(\mathbb{G}_{1/2}); \mathbb{F}_p)$ has cohomological dimension $4$, total rank (i.e., dimension as an $\mathbb{F}_p$-vector space) $12$, and Poincar\'{e} series 
\[ (1+s)^2(1+s+s^2) = 1+3s+4s^2+3s^3+s^4.\]

For topological applications, one also needs to know topological degrees of cohomology classes. These topological degrees are are all divisible by $2(p-1)$, and due to $v_2$-periodicity, are only defined modulo $2(p^2-1)$. If we write a two-variable Poincar\'{e} series for $H^*(\strictAut(\mathbb{G}_{1/2}); \mathbb{F}_p)$, using $s$ to indicate cohomological dimension and using $t$ to indicate topological dimension divided by $2(p-1)$ (e.g. $2s^2t^{p^3}$ is a $2$-dimensional $\mathbb{F}_p$-vector space in cohomological degree $2$ and topological degree $2p^3(p-1)$), then the resulting Poincar\'{e} series is
\[ (1 + s)(1 + st + st^p + s^2t + s^2t^p + s^3).\]
\item In Proposition~\ref{coh of ht 2 fm} we compute the  cohomology of the strict automorphism group scheme of a $\hat{\mathbb{Z}}_p\left[\sqrt{p}\right]$-height $2$ formal $\hat{\mathbb{Z}}_p\left[\sqrt{p}\right]$-module. This is a new computation. We get that $H^*(\strictAut(\mathbb{G}^{\hat{\mathbb{Z}}_p\left[\sqrt{p}\right]}_{1/2}); \mathbb{F}_p)$ has cohomological dimension $8$, total rank $80$, and Poincar\'{e} series 
\begin{dmath*} (1+s)^4(1+3s^2+s^4) =  1+ 4s + 9s^2 + 16s^3 + 20s^4 + 16s^5 + 9s^6 + 4s^7 + s^8.\end{dmath*}

If we write a two-variable Poincar\'{e} series for $H^*(\strictAut(\mathbb{G}^{\hat{\mathbb{Z}}_p\left[\sqrt{p}\right]}_{1/2}); \mathbb{F}_p)$, then the resulting Poincar\'{e} series is
\begin{dmath*} (1+st^{p+1})(1+s)(1 + st + st^p + s^2t^{2} + s^2t^{2p} + s^2t + s^2t^p + 2s^3 + 2s^3t^2 + 2s^3t^{2p} + s^4t + s^4t^p + s^4t^2 + s^4t^{2p} + s^5t + s^5t^p + s^6).\end{dmath*}
\item We carry out (in Propositions~\ref{coh of E 4 4 1} and~\ref{coh of E 4 4 2}) two of the remaining five spectral sequences required to compute the cohomology of the height four Morava stabilizer group scheme. As is evident from the proofs presented in this document, these computations are not too difficult to do by hand.
The remaining spectral sequences were also run primarily by hand but for some parts computer assistance was used (specifically, I used a computer to find cocycle representatives for certain cohomology classes, in order to compute spectral sequence differentials); I plan to update the electronically-available copies of this document with full details of those computations as I get them typed up.
Here are the results I arrived at from running the remaining spectral sequences:
the Poincar\'{e} series of $H^*(\strictAut(\mathbb{G}_{1/4}); \mathbb{F}_p)$, the mod $p$ group scheme cohomology of the strict automorphism group scheme of a height $4$ formal group over $\mathbb{F}_p$, using $s$ to denote cohomological degree, is:
\begin{dmath*}  
(1+s)^4(s^4-s^3+5s^2-x+1)(1+2s+5s^2+9s^3+9s^4+9s^5+5s^6+2s^7+s^8) = 
1+  5 s^{ 1 }  +  18 s^{ 2 }  +  55 s^{ 3 }  +  129 s^{ 4 }  +  249 s^{ 5 }  +  409 s^{ 6 }  +  551 s^{ 7 } + 606 s^8 + 551 s^9 + 409 s^{10} + 249 s^{11} + 129 s^{12} + 55 s^{13} + 18 s^{14} + 5 s^{15} + s^{16} .
\end{dmath*}

The two-variable Poincar\'{e} series of $H^*(\strictAut(\mathbb{G}_{1/4}); \mathbb{F}_p)$, using $s$ to denote cohomological degree and $t$ to denote topological degree divided by $2(p-1)$, 
is:
\begin{dmath*}
1 + s^1 \left( 1  + t^{ p^3 }  + t^{ p^2 }  + t^{ p }  + t \right)
 + s^2 \left(  2 t^{ p^3 }  +  2 t^{ p^2 }  + t^{ p^2 + 2 p^3 }  + t^{ 2 p^2 + p^3 }  +  2 t^{ p }  + t^{ p + p^3 }  + t^{ p + 2 p^2 }  + t^{ 2 p + p^2 }  +  2 t  + t^{ 1 + 2 p^3 }  + t^{ 1 + p^2 }  + t^{ 1 + 2 p }  + t^{ 2 + p^3 }  + t^{ 2 + p } \right)
 + s^3 \left( 1  + t^{ p^3 }  + t^{ 2 p^3 }  + t^{ p^2 }  + t^{ p^2 + p^3 }  +  2 t^{ p^2 + 2 p^3 }  + t^{ 2 p^2 }  +  2 t^{ 2 p^2 + p^3 }  + t^{ 2 p^2 + 2 p^3 }  + t^{ p }  +  3 t^{ p + p^3 }  + t^{ p + p^2 }  +  2 t^{ p + 2 p^2 }  + t^{ p + 2 p^2 + 3 p^3 }  + t^{ p + 3 p^2 + p^3 }  + t^{ 2 p }  +  2 t^{ 2 p + p^2 }  + t^{ 2 p + p^2 + 2 p^3 }  + t^{ 2 p + 2 p^2 }  + t^{ 3 p + 2 p^2 + p^3 }  + t  + t^{ 1 + p^3 }  +  2 t^{ 1 + 2 p^3 }  +  3 t^{ 1 + p^2 }  + t^{ 1 + p^2 + 3 p^3 }  + t^{ 1 + 3 p^2 + 2 p^3 }  + t^{ 1 + p }  +  2 t^{ 1 + 2 p }  + t^{ 1 + 2 p + 2 p^3 }  + t^{ 1 + 2 p + 3 p^2 }  + t^{ 1 + 3 p + p^2 }  + t^{ 2 }  +  2 t^{ 2 + p^3 }  + t^{ 2 + 2 p^3 }  + t^{ 2 + 2 p^2 + p^3 }  +  2 t^{ 2 + p }  + t^{ 2 + p + 3 p^3 }  + t^{ 2 + p + 2 p^2 }  + t^{ 2 + 2 p }  + t^{ 2 + 3 p + p^3 }  + t^{ 3 + p^2 + 2 p^3 }  + t^{ 3 + p + p^3 }  + t^{ 3 + 2 p + p^2 } \right)
 + s^4 \left( 1  + t^{ p^3 }  +  2 t^{ 2 p^3 }  + t^{ p^2 }  +  2 t^{ p^2 + p^3 }  + t^{ p^2 + 2 p^3 }  + t^{ p^2 + 3 p^3 }  +  2 t^{ 2 p^2 }  + t^{ 2 p^2 + p^3 }  +  3 t^{ 2 p^2 + 2 p^3 }  + t^{ 3 p^2 + p^3 }  + t^{ p }  +  3 t^{ p + p^3 }  + t^{ p + 2 p^3 }  +  2 t^{ p + p^2 }  + t^{ p + p^2 + 2 p^3 }  + t^{ p + 2 p^2 }  + t^{ p + 2 p^2 + p^3 }  +  2 t^{ p + 2 p^2 + 3 p^3 }  + t^{ p + 3 p^2 }  +  3 t^{ p + 3 p^2 + p^3 }  + t^{ p + 3 p^2 + 3 p^3 }  +  2 t^{ 2 p }  + t^{ 2 p + p^3 }  + t^{ 2 p + 2 p^3 }  + t^{ 2 p + p^2 }  + t^{ 2 p + p^2 + p^3 }  +  2 t^{ 2 p + p^2 + 2 p^3 }  +  3 t^{ 2 p + 2 p^2 }  + t^{ 2 p + 2 p^2 + 3 p^3 }  + t^{ 2 p + 4 p^2 + 2 p^3 }  + t^{ 3 p + p^2 }  +  2 t^{ 3 p + 2 p^2 + p^3 }  + t^{ 3 p + 2 p^2 + 2 p^3 }  + t^{ 3 p + 3 p^2 + p^3 }  + t  +  2 t^{ 1 + p^3 }  + t^{ 1 + 2 p^3 }  + t^{ 1 + 3 p^3 }  +  3 t^{ 1 + p^2 }  + t^{ 1 + p^2 + 2 p^3 }  +  3 t^{ 1 + p^2 + 3 p^3 }  + t^{ 1 + 2 p^2 }  + t^{ 1 + 2 p^2 + p^3 }  +  2 t^{ 1 + 3 p^2 + 2 p^3 }  + t^{ 1 + 3 p^2 + 3 p^3 }  +  2 t^{ 1 + p }  + t^{ 1 + p + 2 p^3 }  + t^{ 1 + p + 2 p^2 }  + t^{ 1 + 2 p }  + t^{ 1 + 2 p + p^3 }  +  2 t^{ 1 + 2 p + 2 p^3 }  + t^{ 1 + 2 p + p^2 }  +  2 t^{ 1 + 2 p + 3 p^2 }  + t^{ 1 + 3 p }  +  3 t^{ 1 + 3 p + p^2 }  + t^{ 1 + 3 p + 3 p^2 }  +  2 t^{ 2 }  + t^{ 2 + p^3 }  +  3 t^{ 2 + 2 p^3 }  + t^{ 2 + p^2 }  + t^{ 2 + p^2 + p^3 }  + t^{ 2 + 2 p^2 }  +  2 t^{ 2 + 2 p^2 + p^3 }  + t^{ 2 + 2 p^2 + 4 p^3 }  + t^{ 2 + 3 p^2 + 2 p^3 }  + t^{ 2 + p }  + t^{ 2 + p + p^3 }  +  2 t^{ 2 + p + 3 p^3 }  + t^{ 2 + p + p^2 }  +  2 t^{ 2 + p + 2 p^2 }  +  3 t^{ 2 + 2 p }  + t^{ 2 + 2 p + 3 p^3 }  + t^{ 2 + 2 p + 3 p^2 }  +  2 t^{ 2 + 3 p + p^3 }  + t^{ 2 + 3 p + 2 p^3 }  + t^{ 2 + 4 p + 2 p^2 }  + t^{ 3 + p^3 }  +  2 t^{ 3 + p^2 + 2 p^3 }  + t^{ 3 + p^2 + 3 p^3 }  + t^{ 3 + 2 p^2 + 2 p^3 }  + t^{ 3 + p }  +  3 t^{ 3 + p + p^3 }  + t^{ 3 + p + 3 p^3 }  +  2 t^{ 3 + 2 p + p^2 }  + t^{ 3 + 2 p + 2 p^2 }  + t^{ 3 + 3 p + p^3 }  + t^{ 3 + 3 p + p^2 }  + t^{ 4 + 2 p + 2 p^3 } \right)
 + s^5 \left( 1  +  2 t^{ p^3 }  + t^{ 2 p^3 }  + t^{ 3 p^3 }  +  2 t^{ p^2 }  + t^{ p^2 + p^3 }  +  2 t^{ p^2 + 2 p^3 }  +  3 t^{ p^2 + 3 p^3 }  + t^{ 2 p^2 }  +  2 t^{ 2 p^2 + p^3 }  +  3 t^{ 2 p^2 + 2 p^3 }  + t^{ 3 p^2 }  +  3 t^{ 3 p^2 + p^3 }  + t^{ 3 p^2 + 3 p^3 }  +  2 t^{ p }  +  2 t^{ p + p^3 }  +  4 t^{ p + 2 p^3 }  + t^{ p + p^2 }  + t^{ p + p^2 + p^3 }  +  2 t^{ p + p^2 + 2 p^3 }  +  2 t^{ p + 2 p^2 }  +  2 t^{ p + 2 p^2 + p^3 }  + t^{ p + 2 p^2 + 3 p^3 }  +  3 t^{ p + 3 p^2 }  +  3 t^{ p + 3 p^2 + p^3 }  + t^{ p + 3 p^2 + 2 p^3 }  +  3 t^{ p + 3 p^2 + 3 p^3 }  + t^{ p + 4 p^2 + 2 p^3 }  + t^{ 2 p }  +  4 t^{ 2 p + p^3 }  +  2 t^{ 2 p + 2 p^3 }  +  2 t^{ 2 p + p^2 }  +  2 t^{ 2 p + p^2 + p^3 }  + t^{ 2 p + p^2 + 2 p^3 }  + t^{ 2 p + p^2 + 3 p^3 }  +  3 t^{ 2 p + 2 p^2 }  + t^{ 2 p + 2 p^2 + 2 p^3 }  +  3 t^{ 2 p + 2 p^2 + 3 p^3 }  + t^{ 2 p + 3 p^2 + p^3 }  + t^{ 2 p + 4 p^2 + p^3 }  +  2 t^{ 2 p + 4 p^2 + 2 p^3 }  + t^{ 2 p + 4 p^2 + 3 p^3 }  + t^{ 3 p }  +  3 t^{ 3 p + p^2 }  + t^{ 3 p + p^2 + 2 p^3 }  + t^{ 3 p + 2 p^2 + p^3 }  +  3 t^{ 3 p + 2 p^2 + 2 p^3 }  + t^{ 3 p + 3 p^2 }  +  3 t^{ 3 p + 3 p^2 + p^3 }  + t^{ 3 p + 3 p^2 + 3 p^3 }  + t^{ 3 p + 4 p^2 + 2 p^3 }  +  2 t  + t^{ 1 + p^3 }  +  2 t^{ 1 + 2 p^3 }  +  3 t^{ 1 + 3 p^3 }  +  2 t^{ 1 + p^2 }  + t^{ 1 + p^2 + p^3 }  +  2 t^{ 1 + p^2 + 2 p^3 }  +  3 t^{ 1 + p^2 + 3 p^3 }  +  4 t^{ 1 + 2 p^2 }  +  2 t^{ 1 + 2 p^2 + p^3 }  + t^{ 1 + 2 p^2 + 3 p^3 }  + t^{ 1 + 2 p^2 + 4 p^3 }  + t^{ 1 + 3 p^2 + 2 p^3 }  +  3 t^{ 1 + 3 p^2 + 3 p^3 }  + t^{ 1 + p }  + t^{ 1 + p + p^3 }  +  2 t^{ 1 + p + 2 p^3 }  + t^{ 1 + p + p^2 }  +  2 t^{ 1 + p + 2 p^2 }  +  2 t^{ 1 + 2 p }  +  2 t^{ 1 + 2 p + p^3 }  + t^{ 1 + 2 p + 2 p^3 }  + t^{ 1 + 2 p + 3 p^3 }  +  2 t^{ 1 + 2 p + p^2 }  + t^{ 1 + 2 p + 3 p^2 }  +  3 t^{ 1 + 3 p }  + t^{ 1 + 3 p + 2 p^3 }  +  3 t^{ 1 + 3 p + p^2 }  + t^{ 1 + 3 p + 2 p^2 }  +  3 t^{ 1 + 3 p + 3 p^2 }  + t^{ 1 + 4 p + 2 p^2 }  + t^{ 2 }  +  2 t^{ 2 + p^3 }  +  3 t^{ 2 + 2 p^3 }  +  4 t^{ 2 + p^2 }  +  2 t^{ 2 + p^2 + p^3 }  + t^{ 2 + p^2 + 3 p^3 }  + t^{ 2 + p^2 + 4 p^3 }  +  2 t^{ 2 + 2 p^2 }  + t^{ 2 + 2 p^2 + p^3 }  + t^{ 2 + 2 p^2 + 2 p^3 }  +  2 t^{ 2 + 2 p^2 + 4 p^3 }  + t^{ 2 + 3 p^2 + p^3 }  +  3 t^{ 2 + 3 p^2 + 2 p^3 }  + t^{ 2 + 3 p^2 + 4 p^3 }  +  2 t^{ 2 + p }  +  2 t^{ 2 + p + p^3 }  + t^{ 2 + p + 3 p^3 }  +  2 t^{ 2 + p + p^2 }  + t^{ 2 + p + 2 p^2 }  + t^{ 2 + p + 3 p^2 }  +  3 t^{ 2 + 2 p }  + t^{ 2 + 2 p + 2 p^3 }  +  3 t^{ 2 + 2 p + 3 p^3 }  + t^{ 2 + 2 p + 2 p^2 }  +  3 t^{ 2 + 2 p + 3 p^2 }  + t^{ 2 + 3 p + p^3 }  +  3 t^{ 2 + 3 p + 2 p^3 }  + t^{ 2 + 3 p + p^2 }  + t^{ 2 + 4 p + p^2 }  +  2 t^{ 2 + 4 p + 2 p^2 }  + t^{ 2 + 4 p + 3 p^2 }  + t^{ 3 }  +  3 t^{ 3 + p^3 }  + t^{ 3 + 3 p^3 }  + t^{ 3 + p^2 + 2 p^3 }  +  3 t^{ 3 + p^2 + 3 p^3 }  + t^{ 3 + 2 p^2 + p^3 }  +  3 t^{ 3 + 2 p^2 + 2 p^3 }  + t^{ 3 + 2 p^2 + 4 p^3 }  + t^{ 3 + 3 p^2 + 3 p^3 }  +  3 t^{ 3 + p }  +  3 t^{ 3 + p + p^3 }  + t^{ 3 + p + 2 p^3 }  +  3 t^{ 3 + p + 3 p^3 }  + t^{ 3 + p + 2 p^2 }  + t^{ 3 + 2 p + p^3 }  + t^{ 3 + 2 p + p^2 }  +  3 t^{ 3 + 2 p + 2 p^2 }  + t^{ 3 + 3 p }  +  3 t^{ 3 + 3 p + p^3 }  + t^{ 3 + 3 p + 3 p^3 }  +  3 t^{ 3 + 3 p + p^2 }  + t^{ 3 + 3 p + 3 p^2 }  + t^{ 3 + 4 p + 2 p^2 }  + t^{ 4 + p + 2 p^3 }  + t^{ 4 + 2 p + p^3 }  +  2 t^{ 4 + 2 p + 2 p^3 }  + t^{ 4 + 2 p + 3 p^3 }  + t^{ 4 + 3 p + 2 p^3 } \right) \end{dmath*}\begin{dmath*} 
 + s^6 \left( 1  +  2 t^{ p^3 }  +  2 t^{ 2 p^3 }  +  3 t^{ 3 p^3 }  +  2 t^{ p^2 }  + t^{ p^2 + p^3 }  +  4 t^{ p^2 + 2 p^3 }  +  3 t^{ p^2 + 3 p^3 }  +  2 t^{ 2 p^2 }  +  4 t^{ 2 p^2 + p^3 }  +  2 t^{ 2 p^2 + 2 p^3 }  + t^{ 2 p^2 + 3 p^3 }  +  3 t^{ 3 p^2 }  +  3 t^{ 3 p^2 + p^3 }  + t^{ 3 p^2 + 2 p^3 }  +  3 t^{ 3 p^2 + 3 p^3 }  +  2 t^{ p }  +  3 t^{ p + p^3 }  +  5 t^{ p + 2 p^3 }  + t^{ p + 3 p^3 }  + t^{ p + p^2 }  +  2 t^{ p + p^2 + p^3 }  + t^{ p + p^2 + 2 p^3 }  + t^{ p + p^2 + 3 p^3 }  +  4 t^{ p + 2 p^2 }  + t^{ p + 2 p^2 + p^3 }  + t^{ p + 2 p^2 + 2 p^3 }  + t^{ p + 2 p^2 + 3 p^3 }  +  3 t^{ p + 3 p^2 }  +  2 t^{ p + 3 p^2 + p^3 }  +  3 t^{ p + 3 p^2 + 2 p^3 }  +  3 t^{ p + 3 p^2 + 3 p^3 }  + t^{ p + 4 p^2 + p^3 }  +  3 t^{ p + 4 p^2 + 2 p^3 }  + t^{ p + 4 p^2 + 3 p^3 }  +  2 t^{ 2 p }  +  5 t^{ 2 p + p^3 }  + t^{ 2 p + 2 p^3 }  +  4 t^{ 2 p + p^2 }  + t^{ 2 p + p^2 + p^3 }  +  3 t^{ 2 p + p^2 + 2 p^3 }  +  3 t^{ 2 p + p^2 + 3 p^3 }  +  2 t^{ 2 p + 2 p^2 }  + t^{ 2 p + 2 p^2 + p^3 }  +  2 t^{ 2 p + 2 p^2 + 2 p^3 }  +  3 t^{ 2 p + 2 p^2 + 3 p^3 }  + t^{ 2 p + 3 p^2 }  +  3 t^{ 2 p + 3 p^2 + p^3 }  + t^{ 2 p + 3 p^2 + 3 p^3 }  +  3 t^{ 2 p + 4 p^2 + p^3 }  + t^{ 2 p + 4 p^2 + 2 p^3 }  +  3 t^{ 2 p + 4 p^2 + 3 p^3 }  +  3 t^{ 3 p }  + t^{ 3 p + p^3 }  +  3 t^{ 3 p + p^2 }  + t^{ 3 p + p^2 + p^3 }  +  3 t^{ 3 p + p^2 + 2 p^3 }  + t^{ 3 p + 2 p^2 }  + t^{ 3 p + 2 p^2 + p^3 }  +  3 t^{ 3 p + 2 p^2 + 2 p^3 }  + t^{ 3 p + 2 p^2 + 3 p^3 }  +  3 t^{ 3 p + 3 p^2 }  +  3 t^{ 3 p + 3 p^2 + p^3 }  + t^{ 3 p + 3 p^2 + 2 p^3 }  +  3 t^{ 3 p + 3 p^2 + 3 p^3 }  + t^{ 3 p + 4 p^2 + p^3 }  +  3 t^{ 3 p + 4 p^2 + 2 p^3 }  + t^{ 3 p + 4 p^2 + 3 p^3 }  +  2 t  + t^{ 1 + p^3 }  +  4 t^{ 1 + 2 p^3 }  +  3 t^{ 1 + 3 p^3 }  +  3 t^{ 1 + p^2 }  +  2 t^{ 1 + p^2 + p^3 }  + t^{ 1 + p^2 + 2 p^3 }  +  2 t^{ 1 + p^2 + 3 p^3 }  + t^{ 1 + p^2 + 4 p^3 }  +  5 t^{ 1 + 2 p^2 }  + t^{ 1 + 2 p^2 + p^3 }  + t^{ 1 + 2 p^2 + 2 p^3 }  +  3 t^{ 1 + 2 p^2 + 3 p^3 }  +  3 t^{ 1 + 2 p^2 + 4 p^3 }  + t^{ 1 + 3 p^2 }  + t^{ 1 + 3 p^2 + p^3 }  + t^{ 1 + 3 p^2 + 2 p^3 }  +  3 t^{ 1 + 3 p^2 + 3 p^3 }  + t^{ 1 + 3 p^2 + 4 p^3 }  + t^{ 1 + p }  +  2 t^{ 1 + p + p^3 }  + t^{ 1 + p + 2 p^3 }  + t^{ 1 + p + 3 p^3 }  +  2 t^{ 1 + p + p^2 }  + t^{ 1 + p + 2 p^2 }  + t^{ 1 + p + 3 p^2 }  +  4 t^{ 1 + 2 p }  + t^{ 1 + 2 p + p^3 }  +  3 t^{ 1 + 2 p + 2 p^3 }  +  3 t^{ 1 + 2 p + 3 p^3 }  + t^{ 1 + 2 p + p^2 }  + t^{ 1 + 2 p + 2 p^2 }  + t^{ 1 + 2 p + 3 p^2 }  +  3 t^{ 1 + 3 p }  + t^{ 1 + 3 p + p^3 }  +  3 t^{ 1 + 3 p + 2 p^3 }  +  2 t^{ 1 + 3 p + p^2 }  +  3 t^{ 1 + 3 p + 2 p^2 }  +  3 t^{ 1 + 3 p + 3 p^2 }  + t^{ 1 + 4 p + p^2 }  +  3 t^{ 1 + 4 p + 2 p^2 }  + t^{ 1 + 4 p + 3 p^2 }  +  2 t^{ 2 }  +  4 t^{ 2 + p^3 }  +  2 t^{ 2 + 2 p^3 }  + t^{ 2 + 3 p^3 }  +  5 t^{ 2 + p^2 }  + t^{ 2 + p^2 + p^3 }  + t^{ 2 + p^2 + 2 p^3 }  +  3 t^{ 2 + p^2 + 3 p^3 }  +  3 t^{ 2 + p^2 + 4 p^3 }  + t^{ 2 + 2 p^2 }  +  3 t^{ 2 + 2 p^2 + p^3 }  +  2 t^{ 2 + 2 p^2 + 2 p^3 }  + t^{ 2 + 2 p^2 + 4 p^3 }  +  3 t^{ 2 + 3 p^2 + p^3 }  +  3 t^{ 2 + 3 p^2 + 2 p^3 }  + t^{ 2 + 3 p^2 + 3 p^3 }  +  3 t^{ 2 + 3 p^2 + 4 p^3 }  +  4 t^{ 2 + p }  + t^{ 2 + p + p^3 }  + t^{ 2 + p + 2 p^3 }  + t^{ 2 + p + 3 p^3 }  + t^{ 2 + p + p^2 }  +  3 t^{ 2 + p + 2 p^2 }  +  3 t^{ 2 + p + 3 p^2 }  +  2 t^{ 2 + 2 p }  + t^{ 2 + 2 p + p^3 }  +  2 t^{ 2 + 2 p + 2 p^3 }  +  3 t^{ 2 + 2 p + 3 p^3 }  + t^{ 2 + 2 p + p^2 }  +  2 t^{ 2 + 2 p + 2 p^2 }  +  3 t^{ 2 + 2 p + 3 p^2 }  + t^{ 2 + 3 p }  + t^{ 2 + 3 p + p^3 }  +  3 t^{ 2 + 3 p + 2 p^3 }  + t^{ 2 + 3 p + 3 p^3 }  +  3 t^{ 2 + 3 p + p^2 }  + t^{ 2 + 3 p + 3 p^2 }  +  3 t^{ 2 + 4 p + p^2 }  + t^{ 2 + 4 p + 2 p^2 }  +  3 t^{ 2 + 4 p + 3 p^2 }  +  3 t^{ 3 }  +  3 t^{ 3 + p^3 }  + t^{ 3 + 2 p^3 }  +  3 t^{ 3 + 3 p^3 }  + t^{ 3 + p^2 }  + t^{ 3 + p^2 + p^3 }  + t^{ 3 + p^2 + 2 p^3 }  +  3 t^{ 3 + p^2 + 3 p^3 }  + t^{ 3 + p^2 + 4 p^3 }  +  3 t^{ 3 + 2 p^2 + p^3 }  +  3 t^{ 3 + 2 p^2 + 2 p^3 }  + t^{ 3 + 2 p^2 + 3 p^3 }  +  3 t^{ 3 + 2 p^2 + 4 p^3 }  + t^{ 3 + 3 p^2 + 2 p^3 }  +  3 t^{ 3 + 3 p^2 + 3 p^3 }  + t^{ 3 + 3 p^2 + 4 p^3 }  +  3 t^{ 3 + p }  +  2 t^{ 3 + p + p^3 }  +  3 t^{ 3 + p + 2 p^3 }  +  3 t^{ 3 + p + 3 p^3 }  + t^{ 3 + p + p^2 }  +  3 t^{ 3 + p + 2 p^2 }  + t^{ 3 + 2 p }  +  3 t^{ 3 + 2 p + p^3 }  + t^{ 3 + 2 p + 3 p^3 }  + t^{ 3 + 2 p + p^2 }  +  3 t^{ 3 + 2 p + 2 p^2 }  + t^{ 3 + 2 p + 3 p^2 }  +  3 t^{ 3 + 3 p }  +  3 t^{ 3 + 3 p + p^3 }  + t^{ 3 + 3 p + 2 p^3 }  +  3 t^{ 3 + 3 p + 3 p^3 }  +  3 t^{ 3 + 3 p + p^2 }  + t^{ 3 + 3 p + 2 p^2 }  +  3 t^{ 3 + 3 p + 3 p^2 }  + t^{ 3 + 4 p + p^2 }  +  3 t^{ 3 + 4 p + 2 p^2 }  + t^{ 3 + 4 p + 3 p^2 }  + t^{ 4 + p + p^3 }  +  3 t^{ 4 + p + 2 p^3 }  + t^{ 4 + p + 3 p^3 }  +  3 t^{ 4 + 2 p + p^3 }  + t^{ 4 + 2 p + 2 p^3 }  +  3 t^{ 4 + 2 p + 3 p^3 }  + t^{ 4 + 3 p + p^3 }  +  3 t^{ 4 + 3 p + 2 p^3 }  + t^{ 4 + 3 p + 3 p^3 } \right) \end{dmath*}\begin{dmath*} 
 + s^7 \left( 1  +  2 t^{ p^3 }  +  4 t^{ 2 p^3 }  +  3 t^{ 3 p^3 }  +  2 t^{ p^2 }  +  2 t^{ p^2 + p^3 }  +  2 t^{ p^2 + 2 p^3 }  +  2 t^{ p^2 + 3 p^3 }  +  4 t^{ 2 p^2 }  +  2 t^{ 2 p^2 + p^3 }  +  4 t^{ 2 p^2 + 2 p^3 }  +  3 t^{ 2 p^2 + 3 p^3 }  +  3 t^{ 3 p^2 }  +  2 t^{ 3 p^2 + p^3 }  +  3 t^{ 3 p^2 + 2 p^3 }  +  3 t^{ 3 p^2 + 3 p^3 }  +  2 t^{ p }  +  3 t^{ p + p^3 }  +  3 t^{ p + 2 p^3 }  +  4 t^{ p + 3 p^3 }  +  2 t^{ p + p^2 }  + t^{ p + p^2 + p^3 }  +  2 t^{ p + p^2 + 2 p^3 }  +  3 t^{ p + p^2 + 3 p^3 }  +  2 t^{ p + 2 p^2 }  +  2 t^{ p + 2 p^2 + p^3 }  +  2 t^{ p + 2 p^2 + 2 p^3 }  +  2 t^{ p + 2 p^2 + 3 p^3 }  +  2 t^{ p + 3 p^2 }  +  3 t^{ p + 3 p^2 + p^3 }  +  3 t^{ p + 3 p^2 + 2 p^3 }  +  2 t^{ p + 3 p^2 + 3 p^3 }  +  4 t^{ p + 4 p^2 + p^3 }  +  3 t^{ p + 4 p^2 + 2 p^3 }  +  4 t^{ p + 4 p^2 + 3 p^3 }  +  4 t^{ 2 p }  +  3 t^{ 2 p + p^3 }  +  2 t^{ 2 p + 2 p^3 }  +  2 t^{ 2 p + p^2 }  +  2 t^{ 2 p + p^2 + p^3 }  +  7 t^{ 2 p + p^2 + 2 p^3 }  +  3 t^{ 2 p + p^2 + 3 p^3 }  +  4 t^{ 2 p + 2 p^2 }  +  2 t^{ 2 p + 2 p^2 + p^3 }  + t^{ 2 p + 2 p^2 + 2 p^3 }  +  2 t^{ 2 p + 2 p^2 + 3 p^3 }  +  3 t^{ 2 p + 3 p^2 }  +  3 t^{ 2 p + 3 p^2 + p^3 }  + t^{ 2 p + 3 p^2 + 2 p^3 }  +  3 t^{ 2 p + 3 p^2 + 3 p^3 }  +  3 t^{ 2 p + 4 p^2 + p^3 }  +  2 t^{ 2 p + 4 p^2 + 2 p^3 }  +  3 t^{ 2 p + 4 p^2 + 3 p^3 }  +  3 t^{ 3 p }  +  4 t^{ 3 p + p^3 }  +  2 t^{ 3 p + p^2 }  +  3 t^{ 3 p + p^2 + p^3 }  +  3 t^{ 3 p + p^2 + 2 p^3 }  +  3 t^{ 3 p + 2 p^2 }  +  2 t^{ 3 p + 2 p^2 + p^3 }  +  2 t^{ 3 p + 2 p^2 + 2 p^3 }  +  4 t^{ 3 p + 2 p^2 + 3 p^3 }  +  3 t^{ 3 p + 3 p^2 }  +  2 t^{ 3 p + 3 p^2 + p^3 }  +  3 t^{ 3 p + 3 p^2 + 2 p^3 }  +  3 t^{ 3 p + 3 p^2 + 3 p^3 }  +  4 t^{ 3 p + 4 p^2 + p^3 }  +  3 t^{ 3 p + 4 p^2 + 2 p^3 }  +  4 t^{ 3 p + 4 p^2 + 3 p^3 }  +  2 t  +  2 t^{ 1 + p^3 }  +  2 t^{ 1 + 2 p^3 }  +  2 t^{ 1 + 3 p^3 }  +  3 t^{ 1 + p^2 }  + t^{ 1 + p^2 + p^3 }  +  2 t^{ 1 + p^2 + 2 p^3 }  +  3 t^{ 1 + p^2 + 3 p^3 }  +  4 t^{ 1 + p^2 + 4 p^3 }  +  3 t^{ 1 + 2 p^2 }  +  2 t^{ 1 + 2 p^2 + p^3 }  +  2 t^{ 1 + 2 p^2 + 2 p^3 }  +  3 t^{ 1 + 2 p^2 + 3 p^3 }  +  3 t^{ 1 + 2 p^2 + 4 p^3 }  +  4 t^{ 1 + 3 p^2 }  +  3 t^{ 1 + 3 p^2 + p^3 }  +  2 t^{ 1 + 3 p^2 + 2 p^3 }  +  2 t^{ 1 + 3 p^2 + 3 p^3 }  +  4 t^{ 1 + 3 p^2 + 4 p^3 }  +  2 t^{ 1 + p }  + t^{ 1 + p + p^3 }  +  2 t^{ 1 + p + 2 p^3 }  +  3 t^{ 1 + p + 3 p^3 }  + t^{ 1 + p + p^2 }  +  2 t^{ 1 + p + 2 p^2 }  +  3 t^{ 1 + p + 3 p^2 }  +  2 t^{ 1 + 2 p }  +  2 t^{ 1 + 2 p + p^3 }  +  7 t^{ 1 + 2 p + 2 p^3 }  +  3 t^{ 1 + 2 p + 3 p^3 }  +  2 t^{ 1 + 2 p + p^2 }  +  2 t^{ 1 + 2 p + 2 p^2 }  +  2 t^{ 1 + 2 p + 3 p^2 }  +  2 t^{ 1 + 3 p }  +  3 t^{ 1 + 3 p + p^3 }  +  3 t^{ 1 + 3 p + 2 p^3 }  +  3 t^{ 1 + 3 p + p^2 }  +  3 t^{ 1 + 3 p + 2 p^2 }  +  2 t^{ 1 + 3 p + 3 p^2 }  +  4 t^{ 1 + 4 p + p^2 }  +  3 t^{ 1 + 4 p + 2 p^2 }  +  4 t^{ 1 + 4 p + 3 p^2 }  +  4 t^{ 2 }  +  2 t^{ 2 + p^3 }  +  4 t^{ 2 + 2 p^3 }  +  3 t^{ 2 + 3 p^3 }  +  3 t^{ 2 + p^2 }  +  2 t^{ 2 + p^2 + p^3 }  +  2 t^{ 2 + p^2 + 2 p^3 }  +  3 t^{ 2 + p^2 + 3 p^3 }  +  3 t^{ 2 + p^2 + 4 p^3 }  +  2 t^{ 2 + 2 p^2 }  +  7 t^{ 2 + 2 p^2 + p^3 }  + t^{ 2 + 2 p^2 + 2 p^3 }  + t^{ 2 + 2 p^2 + 3 p^3 }  +  2 t^{ 2 + 2 p^2 + 4 p^3 }  +  3 t^{ 2 + 3 p^2 + p^3 }  +  2 t^{ 2 + 3 p^2 + 2 p^3 }  +  3 t^{ 2 + 3 p^2 + 3 p^3 }  +  3 t^{ 2 + 3 p^2 + 4 p^3 }  +  2 t^{ 2 + p }  +  2 t^{ 2 + p + p^3 }  +  2 t^{ 2 + p + 2 p^3 }  +  2 t^{ 2 + p + 3 p^3 }  +  2 t^{ 2 + p + p^2 }  +  7 t^{ 2 + p + 2 p^2 }  +  3 t^{ 2 + p + 3 p^2 }  +  4 t^{ 2 + 2 p }  +  2 t^{ 2 + 2 p + p^3 }  + t^{ 2 + 2 p + 2 p^3 }  +  2 t^{ 2 + 2 p + 3 p^3 }  +  2 t^{ 2 + 2 p + p^2 }  + t^{ 2 + 2 p + 2 p^2 }  +  2 t^{ 2 + 2 p + 3 p^2 }  +  3 t^{ 2 + 3 p }  +  2 t^{ 2 + 3 p + p^3 }  +  2 t^{ 2 + 3 p + 2 p^3 }  +  4 t^{ 2 + 3 p + 3 p^3 }  +  3 t^{ 2 + 3 p + p^2 }  + t^{ 2 + 3 p + 2 p^2 }  +  3 t^{ 2 + 3 p + 3 p^2 }  +  3 t^{ 2 + 4 p + p^2 }  +  2 t^{ 2 + 4 p + 2 p^2 }  +  3 t^{ 2 + 4 p + 3 p^2 }  +  3 t^{ 3 }  +  2 t^{ 3 + p^3 }  +  3 t^{ 3 + 2 p^3 }  +  3 t^{ 3 + 3 p^3 }  +  4 t^{ 3 + p^2 }  +  3 t^{ 3 + p^2 + p^3 }  +  2 t^{ 3 + p^2 + 2 p^3 }  +  2 t^{ 3 + p^2 + 3 p^3 }  +  4 t^{ 3 + p^2 + 4 p^3 }  +  3 t^{ 3 + 2 p^2 + p^3 }  +  2 t^{ 3 + 2 p^2 + 2 p^3 }  +  3 t^{ 3 + 2 p^2 + 3 p^3 }  +  3 t^{ 3 + 2 p^2 + 4 p^3 }  +  4 t^{ 3 + 3 p^2 + 2 p^3 }  +  3 t^{ 3 + 3 p^2 + 3 p^3 }  +  4 t^{ 3 + 3 p^2 + 4 p^3 }  +  2 t^{ 3 + p }  +  3 t^{ 3 + p + p^3 }  +  3 t^{ 3 + p + 2 p^3 }  +  2 t^{ 3 + p + 3 p^3 }  +  3 t^{ 3 + p + p^2 }  +  3 t^{ 3 + p + 2 p^2 }  +  3 t^{ 3 + 2 p }  +  3 t^{ 3 + 2 p + p^3 }  + t^{ 3 + 2 p + 2 p^3 }  +  3 t^{ 3 + 2 p + 3 p^3 }  +  2 t^{ 3 + 2 p + p^2 }  +  2 t^{ 3 + 2 p + 2 p^2 }  +  4 t^{ 3 + 2 p + 3 p^2 }  +  3 t^{ 3 + 3 p }  +  2 t^{ 3 + 3 p + p^3 }  +  3 t^{ 3 + 3 p + 2 p^3 }  +  3 t^{ 3 + 3 p + 3 p^3 }  +  2 t^{ 3 + 3 p + p^2 }  +  3 t^{ 3 + 3 p + 2 p^2 }  +  3 t^{ 3 + 3 p + 3 p^2 }  +  4 t^{ 3 + 4 p + p^2 }  +  3 t^{ 3 + 4 p + 2 p^2 }  +  4 t^{ 3 + 4 p + 3 p^2 }  +  4 t^{ 4 + p + p^3 }  +  3 t^{ 4 + p + 2 p^3 }  +  4 t^{ 4 + p + 3 p^3 }  +  3 t^{ 4 + 2 p + p^3 }  +  2 t^{ 4 + 2 p + 2 p^3 }  +  3 t^{ 4 + 2 p + 3 p^3 }  +  4 t^{ 4 + 3 p + p^3 }  +  3 t^{ 4 + 3 p + 2 p^3 }  +  4 t^{ 4 + 3 p + 3 p^3 } \right) \end{dmath*}\begin{dmath*} 
 + s^8 \left(  2  + t^{ p^3 }  +  2 t^{ 2 p^3 }  +  2 t^{ 3 p^3 }  + t^{ p^2 }  +  2 t^{ p^2 + p^3 }  + t^{ p^2 + 2 p^3 }  +  3 t^{ p^2 + 3 p^3 }  +  2 t^{ 2 p^2 }  + t^{ 2 p^2 + p^3 }  +  6 t^{ 2 p^2 + 2 p^3 }  +  3 t^{ 2 p^2 + 3 p^3 }  + t^{ 3 p^2 }  +  3 t^{ 3 p^2 + p^3 }  +  3 t^{ 3 p^2 + 2 p^3 }  +  2 t^{ 3 p^2 + 3 p^3 }  + t^{ p }  +  2 t^{ p + p^3 }  +  5 t^{ p + 2 p^3 }  +  6 t^{ p + 3 p^3 }  +  2 t^{ p + p^2 }  + t^{ p + p^2 + p^3 }  +  4 t^{ p + p^2 + 2 p^3 }  +  3 t^{ p + p^2 + 3 p^3 }  + t^{ p + 2 p^2 }  +  4 t^{ p + 2 p^2 + p^3 }  + t^{ p + 2 p^2 + 2 p^3 }  +  2 t^{ p + 2 p^2 + 3 p^3 }  +  3 t^{ p + 3 p^2 }  +  3 t^{ p + 3 p^2 + p^3 }  +  2 t^{ p + 3 p^2 + 2 p^3 }  +  3 t^{ p + 3 p^2 + 3 p^3 }  +  6 t^{ p + 4 p^2 + p^3 }  +  2 t^{ p + 4 p^2 + 2 p^3 }  +  6 t^{ p + 4 p^2 + 3 p^3 }  +  2 t^{ 2 p }  +  5 t^{ 2 p + p^3 }  +  4 t^{ 2 p + 2 p^3 }  + t^{ 2 p + p^2 }  +  4 t^{ 2 p + p^2 + p^3 }  +  5 t^{ 2 p + p^2 + 2 p^3 }  +  2 t^{ 2 p + p^2 + 3 p^3 }  +  6 t^{ 2 p + 2 p^2 }  + t^{ 2 p + 2 p^2 + p^3 }  +  2 t^{ 2 p + 2 p^2 + 2 p^3 }  +  3 t^{ 2 p + 2 p^2 + 3 p^3 }  +  3 t^{ 2 p + 3 p^2 }  +  2 t^{ 2 p + 3 p^2 + p^3 }  +  3 t^{ 2 p + 3 p^2 + 2 p^3 }  +  3 t^{ 2 p + 3 p^2 + 3 p^3 }  +  2 t^{ 2 p + 4 p^2 + p^3 }  +  4 t^{ 2 p + 4 p^2 + 2 p^3 }  +  2 t^{ 2 p + 4 p^2 + 3 p^3 }  +  2 t^{ 3 p }  +  6 t^{ 3 p + p^3 }  +  3 t^{ 3 p + p^2 }  +  3 t^{ 3 p + p^2 + p^3 }  +  2 t^{ 3 p + p^2 + 2 p^3 }  +  3 t^{ 3 p + 2 p^2 }  +  2 t^{ 3 p + 2 p^2 + p^3 }  +  3 t^{ 3 p + 2 p^2 + 2 p^3 }  +  6 t^{ 3 p + 2 p^2 + 3 p^3 }  +  2 t^{ 3 p + 3 p^2 }  +  3 t^{ 3 p + 3 p^2 + p^3 }  +  3 t^{ 3 p + 3 p^2 + 2 p^3 }  +  2 t^{ 3 p + 3 p^2 + 3 p^3 }  +  6 t^{ 3 p + 4 p^2 + p^3 }  +  2 t^{ 3 p + 4 p^2 + 2 p^3 }  +  6 t^{ 3 p + 4 p^2 + 3 p^3 }  + t  +  2 t^{ 1 + p^3 }  + t^{ 1 + 2 p^3 }  +  3 t^{ 1 + 3 p^3 }  +  2 t^{ 1 + p^2 }  + t^{ 1 + p^2 + p^3 }  +  4 t^{ 1 + p^2 + 2 p^3 }  +  3 t^{ 1 + p^2 + 3 p^3 }  +  6 t^{ 1 + p^2 + 4 p^3 }  +  5 t^{ 1 + 2 p^2 }  +  4 t^{ 1 + 2 p^2 + p^3 }  + t^{ 1 + 2 p^2 + 2 p^3 }  +  2 t^{ 1 + 2 p^2 + 3 p^3 }  +  2 t^{ 1 + 2 p^2 + 4 p^3 }  +  6 t^{ 1 + 3 p^2 }  +  3 t^{ 1 + 3 p^2 + p^3 }  +  2 t^{ 1 + 3 p^2 + 2 p^3 }  +  3 t^{ 1 + 3 p^2 + 3 p^3 }  +  6 t^{ 1 + 3 p^2 + 4 p^3 }  +  2 t^{ 1 + p }  + t^{ 1 + p + p^3 }  +  4 t^{ 1 + p + 2 p^3 }  +  3 t^{ 1 + p + 3 p^3 }  + t^{ 1 + p + p^2 }  +  4 t^{ 1 + p + 2 p^2 }  +  3 t^{ 1 + p + 3 p^2 }  + t^{ 1 + 2 p }  +  4 t^{ 1 + 2 p + p^3 }  +  5 t^{ 1 + 2 p + 2 p^3 }  +  2 t^{ 1 + 2 p + 3 p^3 }  +  4 t^{ 1 + 2 p + p^2 }  + t^{ 1 + 2 p + 2 p^2 }  +  2 t^{ 1 + 2 p + 3 p^2 }  +  3 t^{ 1 + 3 p }  +  3 t^{ 1 + 3 p + p^3 }  +  2 t^{ 1 + 3 p + 2 p^3 }  +  3 t^{ 1 + 3 p + p^2 }  +  2 t^{ 1 + 3 p + 2 p^2 }  +  3 t^{ 1 + 3 p + 3 p^2 }  +  6 t^{ 1 + 4 p + p^2 }  +  2 t^{ 1 + 4 p + 2 p^2 }  +  6 t^{ 1 + 4 p + 3 p^2 }  +  2 t^{ 2 }  + t^{ 2 + p^3 }  +  6 t^{ 2 + 2 p^3 }  +  3 t^{ 2 + 3 p^3 }  +  5 t^{ 2 + p^2 }  +  4 t^{ 2 + p^2 + p^3 }  + t^{ 2 + p^2 + 2 p^3 }  +  2 t^{ 2 + p^2 + 3 p^3 }  + t^{ 2 + p^2 + 4 p^3 }  +  4 t^{ 2 + 2 p^2 }  +  5 t^{ 2 + 2 p^2 + p^3 }  +  2 t^{ 2 + 2 p^2 + 2 p^3 }  +  3 t^{ 2 + 2 p^2 + 3 p^3 }  +  2 t^{ 2 + 2 p^2 + 4 p^3 }  +  2 t^{ 2 + 3 p^2 + p^3 }  +  3 t^{ 2 + 3 p^2 + 2 p^3 }  +  3 t^{ 2 + 3 p^2 + 3 p^3 }  + t^{ 2 + 3 p^2 + 4 p^3 }  + t^{ 2 + p }  +  4 t^{ 2 + p + p^3 }  + t^{ 2 + p + 2 p^3 }  +  2 t^{ 2 + p + 3 p^3 }  +  4 t^{ 2 + p + p^2 }  +  5 t^{ 2 + p + 2 p^2 }  +  2 t^{ 2 + p + 3 p^2 }  +  6 t^{ 2 + 2 p }  + t^{ 2 + 2 p + p^3 }  +  2 t^{ 2 + 2 p + 2 p^3 }  +  3 t^{ 2 + 2 p + 3 p^3 }  + t^{ 2 + 2 p + p^2 }  +  2 t^{ 2 + 2 p + 2 p^2 }  +  3 t^{ 2 + 2 p + 3 p^2 }  +  3 t^{ 2 + 3 p }  +  2 t^{ 2 + 3 p + p^3 }  +  3 t^{ 2 + 3 p + 2 p^3 }  +  6 t^{ 2 + 3 p + 3 p^3 }  +  2 t^{ 2 + 3 p + p^2 }  +  3 t^{ 2 + 3 p + 2 p^2 }  +  3 t^{ 2 + 3 p + 3 p^2 }  + t^{ 2 + 4 p + p^2 }  +  2 t^{ 2 + 4 p + 2 p^2 }  + t^{ 2 + 4 p + 3 p^2 }  +  2 t^{ 3 }  +  3 t^{ 3 + p^3 }  +  3 t^{ 3 + 2 p^3 }  +  2 t^{ 3 + 3 p^3 }  +  6 t^{ 3 + p^2 }  +  3 t^{ 3 + p^2 + p^3 }  +  2 t^{ 3 + p^2 + 2 p^3 }  +  3 t^{ 3 + p^2 + 3 p^3 }  +  6 t^{ 3 + p^2 + 4 p^3 }  +  2 t^{ 3 + 2 p^2 + p^3 }  +  3 t^{ 3 + 2 p^2 + 2 p^3 }  +  3 t^{ 3 + 2 p^2 + 3 p^3 }  +  2 t^{ 3 + 2 p^2 + 4 p^3 }  +  6 t^{ 3 + 3 p^2 + 2 p^3 }  +  2 t^{ 3 + 3 p^2 + 3 p^3 }  +  6 t^{ 3 + 3 p^2 + 4 p^3 }  +  3 t^{ 3 + p }  +  3 t^{ 3 + p + p^3 }  +  2 t^{ 3 + p + 2 p^3 }  +  3 t^{ 3 + p + 3 p^3 }  +  3 t^{ 3 + p + p^2 }  +  2 t^{ 3 + p + 2 p^2 }  +  3 t^{ 3 + 2 p }  +  2 t^{ 3 + 2 p + p^3 }  +  3 t^{ 3 + 2 p + 2 p^3 }  +  3 t^{ 3 + 2 p + 3 p^3 }  +  2 t^{ 3 + 2 p + p^2 }  +  3 t^{ 3 + 2 p + 2 p^2 }  +  6 t^{ 3 + 2 p + 3 p^2 }  +  2 t^{ 3 + 3 p }  +  3 t^{ 3 + 3 p + p^3 }  +  3 t^{ 3 + 3 p + 2 p^3 }  + t^{ 3 + 3 p + 3 p^3 }  +  3 t^{ 3 + 3 p + p^2 }  +  3 t^{ 3 + 3 p + 2 p^2 }  +  2 t^{ 3 + 3 p + 3 p^2 }  +  6 t^{ 3 + 4 p + p^2 }  +  2 t^{ 3 + 4 p + 2 p^2 }  +  6 t^{ 3 + 4 p + 3 p^2 }  +  6 t^{ 4 + p + p^3 }  +  2 t^{ 4 + p + 2 p^3 }  +  6 t^{ 4 + p + 3 p^3 }  +  2 t^{ 4 + 2 p + p^3 }  +  4 t^{ 4 + 2 p + 2 p^3 }  +  2 t^{ 4 + 2 p + 3 p^3 }  +  6 t^{ 4 + 3 p + p^3 }  +  2 t^{ 4 + 3 p + 2 p^3 }  +  6 t^{ 4 + 3 p + 3 p^3 } \right) \end{dmath*}\begin{dmath*} 
 + s^9 \left( 1  + t^{ p^3 }  + t^{ 2 p^3 }  +  3 t^{ 3 p^3 }  + t^{ p^2 }  +  2 t^{ p^2 + p^3 }  +  2 t^{ p^2 + 2 p^3 }  +  3 t^{ p^2 + 3 p^3 }  + t^{ 2 p^2 }  +  2 t^{ 2 p^2 + p^3 }  +  4 t^{ 2 p^2 + 2 p^3 }  +  2 t^{ 2 p^2 + 3 p^3 }  +  3 t^{ 3 p^2 }  +  3 t^{ 3 p^2 + p^3 }  +  2 t^{ 3 p^2 + 2 p^3 }  +  3 t^{ 3 p^2 + 3 p^3 }  + t^{ p }  +  3 t^{ p + p^3 }  +  7 t^{ p + 2 p^3 }  +  4 t^{ p + 3 p^3 }  +  2 t^{ p + p^2 }  +  2 t^{ p + p^2 + p^3 }  +  2 t^{ p + p^2 + 2 p^3 }  +  2 t^{ p + p^2 + 3 p^3 }  +  2 t^{ p + 2 p^2 }  +  2 t^{ p + 2 p^2 + p^3 }  +  2 t^{ p + 2 p^2 + 2 p^3 }  +  2 t^{ p + 2 p^2 + 3 p^3 }  +  3 t^{ p + 3 p^2 }  + t^{ p + 3 p^2 + p^3 }  +  3 t^{ p + 3 p^2 + 2 p^3 }  +  3 t^{ p + 3 p^2 + 3 p^3 }  +  4 t^{ p + 4 p^2 + p^3 }  +  3 t^{ p + 4 p^2 + 2 p^3 }  +  4 t^{ p + 4 p^2 + 3 p^3 }  + t^{ 2 p }  +  7 t^{ 2 p + p^3 }  +  2 t^{ 2 p + 2 p^3 }  +  2 t^{ 2 p + p^2 }  +  2 t^{ 2 p + p^2 + p^3 }  +  3 t^{ 2 p + p^2 + 2 p^3 }  +  3 t^{ 2 p + p^2 + 3 p^3 }  +  4 t^{ 2 p + 2 p^2 }  +  2 t^{ 2 p + 2 p^2 + p^3 }  +  4 t^{ 2 p + 2 p^2 + 2 p^3 }  +  3 t^{ 2 p + 2 p^2 + 3 p^3 }  +  2 t^{ 2 p + 3 p^2 }  +  3 t^{ 2 p + 3 p^2 + p^3 }  +  3 t^{ 2 p + 3 p^2 + 2 p^3 }  +  2 t^{ 2 p + 3 p^2 + 3 p^3 }  +  3 t^{ 2 p + 4 p^2 + p^3 }  +  2 t^{ 2 p + 4 p^2 + 2 p^3 }  +  3 t^{ 2 p + 4 p^2 + 3 p^3 }  +  3 t^{ 3 p }  +  4 t^{ 3 p + p^3 }  +  3 t^{ 3 p + p^2 }  +  2 t^{ 3 p + p^2 + p^3 }  +  3 t^{ 3 p + p^2 + 2 p^3 }  +  2 t^{ 3 p + 2 p^2 }  +  2 t^{ 3 p + 2 p^2 + p^3 }  +  3 t^{ 3 p + 2 p^2 + 2 p^3 }  +  4 t^{ 3 p + 2 p^2 + 3 p^3 }  +  3 t^{ 3 p + 3 p^2 }  +  3 t^{ 3 p + 3 p^2 + p^3 }  +  2 t^{ 3 p + 3 p^2 + 2 p^3 }  +  3 t^{ 3 p + 3 p^2 + 3 p^3 }  +  4 t^{ 3 p + 4 p^2 + p^3 }  +  3 t^{ 3 p + 4 p^2 + 2 p^3 }  +  4 t^{ 3 p + 4 p^2 + 3 p^3 }  + t  +  2 t^{ 1 + p^3 }  +  2 t^{ 1 + 2 p^3 }  +  3 t^{ 1 + 3 p^3 }  +  3 t^{ 1 + p^2 }  +  2 t^{ 1 + p^2 + p^3 }  +  2 t^{ 1 + p^2 + 2 p^3 }  + t^{ 1 + p^2 + 3 p^3 }  +  4 t^{ 1 + p^2 + 4 p^3 }  +  7 t^{ 1 + 2 p^2 }  +  2 t^{ 1 + 2 p^2 + p^3 }  +  2 t^{ 1 + 2 p^2 + 2 p^3 }  +  3 t^{ 1 + 2 p^2 + 3 p^3 }  +  3 t^{ 1 + 2 p^2 + 4 p^3 }  +  4 t^{ 1 + 3 p^2 }  +  2 t^{ 1 + 3 p^2 + p^3 }  +  2 t^{ 1 + 3 p^2 + 2 p^3 }  +  3 t^{ 1 + 3 p^2 + 3 p^3 }  +  4 t^{ 1 + 3 p^2 + 4 p^3 }  +  2 t^{ 1 + p }  +  2 t^{ 1 + p + p^3 }  +  2 t^{ 1 + p + 2 p^3 }  +  2 t^{ 1 + p + 3 p^3 }  +  2 t^{ 1 + p + p^2 }  +  2 t^{ 1 + p + 2 p^2 }  +  2 t^{ 1 + p + 3 p^2 }  +  2 t^{ 1 + 2 p }  +  2 t^{ 1 + 2 p + p^3 }  +  3 t^{ 1 + 2 p + 2 p^3 }  +  3 t^{ 1 + 2 p + 3 p^3 }  +  2 t^{ 1 + 2 p + p^2 }  +  2 t^{ 1 + 2 p + 2 p^2 }  +  2 t^{ 1 + 2 p + 3 p^2 }  +  3 t^{ 1 + 3 p }  +  2 t^{ 1 + 3 p + p^3 }  +  3 t^{ 1 + 3 p + 2 p^3 }  + t^{ 1 + 3 p + p^2 }  +  3 t^{ 1 + 3 p + 2 p^2 }  +  3 t^{ 1 + 3 p + 3 p^2 }  +  4 t^{ 1 + 4 p + p^2 }  +  3 t^{ 1 + 4 p + 2 p^2 }  +  4 t^{ 1 + 4 p + 3 p^2 }  + t^{ 2 }  +  2 t^{ 2 + p^3 }  +  4 t^{ 2 + 2 p^3 }  +  2 t^{ 2 + 3 p^3 }  +  7 t^{ 2 + p^2 }  +  2 t^{ 2 + p^2 + p^3 }  +  2 t^{ 2 + p^2 + 2 p^3 }  +  3 t^{ 2 + p^2 + 3 p^3 }  +  3 t^{ 2 + p^2 + 4 p^3 }  +  2 t^{ 2 + 2 p^2 }  +  3 t^{ 2 + 2 p^2 + p^3 }  +  4 t^{ 2 + 2 p^2 + 2 p^3 }  +  3 t^{ 2 + 2 p^2 + 3 p^3 }  +  2 t^{ 2 + 2 p^2 + 4 p^3 }  +  3 t^{ 2 + 3 p^2 + p^3 }  +  3 t^{ 2 + 3 p^2 + 2 p^3 }  +  2 t^{ 2 + 3 p^2 + 3 p^3 }  +  3 t^{ 2 + 3 p^2 + 4 p^3 }  +  2 t^{ 2 + p }  +  2 t^{ 2 + p + p^3 }  +  2 t^{ 2 + p + 2 p^3 }  +  2 t^{ 2 + p + 3 p^3 }  +  2 t^{ 2 + p + p^2 }  +  3 t^{ 2 + p + 2 p^2 }  +  3 t^{ 2 + p + 3 p^2 }  +  4 t^{ 2 + 2 p }  +  2 t^{ 2 + 2 p + p^3 }  +  4 t^{ 2 + 2 p + 2 p^3 }  +  3 t^{ 2 + 2 p + 3 p^3 }  +  2 t^{ 2 + 2 p + p^2 }  +  4 t^{ 2 + 2 p + 2 p^2 }  +  3 t^{ 2 + 2 p + 3 p^2 }  +  2 t^{ 2 + 3 p }  +  2 t^{ 2 + 3 p + p^3 }  +  3 t^{ 2 + 3 p + 2 p^3 }  +  4 t^{ 2 + 3 p + 3 p^3 }  +  3 t^{ 2 + 3 p + p^2 }  +  3 t^{ 2 + 3 p + 2 p^2 }  +  2 t^{ 2 + 3 p + 3 p^2 }  +  3 t^{ 2 + 4 p + p^2 }  +  2 t^{ 2 + 4 p + 2 p^2 }  +  3 t^{ 2 + 4 p + 3 p^2 }  +  3 t^{ 3 }  +  3 t^{ 3 + p^3 }  +  2 t^{ 3 + 2 p^3 }  +  3 t^{ 3 + 3 p^3 }  +  4 t^{ 3 + p^2 }  +  2 t^{ 3 + p^2 + p^3 }  +  2 t^{ 3 + p^2 + 2 p^3 }  +  3 t^{ 3 + p^2 + 3 p^3 }  +  4 t^{ 3 + p^2 + 4 p^3 }  +  3 t^{ 3 + 2 p^2 + p^3 }  +  3 t^{ 3 + 2 p^2 + 2 p^3 }  +  2 t^{ 3 + 2 p^2 + 3 p^3 }  +  3 t^{ 3 + 2 p^2 + 4 p^3 }  +  4 t^{ 3 + 3 p^2 + 2 p^3 }  +  3 t^{ 3 + 3 p^2 + 3 p^3 }  +  4 t^{ 3 + 3 p^2 + 4 p^3 }  +  3 t^{ 3 + p }  + t^{ 3 + p + p^3 }  +  3 t^{ 3 + p + 2 p^3 }  +  3 t^{ 3 + p + 3 p^3 }  +  2 t^{ 3 + p + p^2 }  +  3 t^{ 3 + p + 2 p^2 }  +  2 t^{ 3 + 2 p }  +  3 t^{ 3 + 2 p + p^3 }  +  3 t^{ 3 + 2 p + 2 p^3 }  +  2 t^{ 3 + 2 p + 3 p^3 }  +  2 t^{ 3 + 2 p + p^2 }  +  3 t^{ 3 + 2 p + 2 p^2 }  +  4 t^{ 3 + 2 p + 3 p^2 }  +  3 t^{ 3 + 3 p }  +  3 t^{ 3 + 3 p + p^3 }  +  2 t^{ 3 + 3 p + 2 p^3 }  +  3 t^{ 3 + 3 p + 3 p^3 }  +  3 t^{ 3 + 3 p + p^2 }  +  2 t^{ 3 + 3 p + 2 p^2 }  +  3 t^{ 3 + 3 p + 3 p^2 }  +  4 t^{ 3 + 4 p + p^2 }  +  3 t^{ 3 + 4 p + 2 p^2 }  +  4 t^{ 3 + 4 p + 3 p^2 }  +  4 t^{ 4 + p + p^3 }  +  3 t^{ 4 + p + 2 p^3 }  +  4 t^{ 4 + p + 3 p^3 }  +  3 t^{ 4 + 2 p + p^3 }  +  2 t^{ 4 + 2 p + 2 p^3 }  +  3 t^{ 4 + 2 p + 3 p^3 }  +  4 t^{ 4 + 3 p + p^3 }  +  3 t^{ 4 + 3 p + 2 p^3 }  +  4 t^{ 4 + 3 p + 3 p^3 } \right) \end{dmath*}\begin{dmath*} 
 + s^{10} \left( 1  +  2 t^{ p^3 }  +  2 t^{ 2 p^3 }  +  3 t^{ 3 p^3 }  +  2 t^{ p^2 }  + t^{ p^2 + p^3 }  + t^{ p^2 + 2 p^3 }  + t^{ p^2 + 3 p^3 }  +  2 t^{ 2 p^2 }  + t^{ 2 p^2 + p^3 }  +  2 t^{ 2 p^2 + 2 p^3 }  +  3 t^{ 2 p^2 + 3 p^3 }  +  3 t^{ 3 p^2 }  + t^{ 3 p^2 + p^3 }  +  3 t^{ 3 p^2 + 2 p^3 }  +  3 t^{ 3 p^2 + 3 p^3 }  +  2 t^{ p }  +  3 t^{ p + p^3 }  +  3 t^{ p + 2 p^3 }  + t^{ p + 3 p^3 }  + t^{ p + p^2 }  +  2 t^{ p + p^2 + p^3 }  + t^{ p + p^2 + 2 p^3 }  +  3 t^{ p + p^2 + 3 p^3 }  + t^{ p + 2 p^2 }  + t^{ p + 2 p^2 + p^3 }  +  4 t^{ p + 2 p^2 + 2 p^3 }  + t^{ p + 2 p^2 + 3 p^3 }  + t^{ p + 3 p^2 }  +  3 t^{ p + 3 p^2 + 2 p^3 }  + t^{ p + 3 p^2 + 3 p^3 }  + t^{ p + 4 p^2 + p^3 }  +  3 t^{ p + 4 p^2 + 2 p^3 }  + t^{ p + 4 p^2 + 3 p^3 }  +  2 t^{ 2 p }  +  3 t^{ 2 p + p^3 }  + t^{ 2 p + 2 p^3 }  + t^{ 2 p + p^2 }  + t^{ 2 p + p^2 + p^3 }  +  5 t^{ 2 p + p^2 + 2 p^3 }  +  3 t^{ 2 p + p^2 + 3 p^3 }  +  2 t^{ 2 p + 2 p^2 }  +  4 t^{ 2 p + 2 p^2 + p^3 }  +  2 t^{ 2 p + 2 p^2 + 2 p^3 }  + t^{ 2 p + 2 p^2 + 3 p^3 }  +  3 t^{ 2 p + 3 p^2 }  +  3 t^{ 2 p + 3 p^2 + p^3 }  +  2 t^{ 2 p + 3 p^2 + 2 p^3 }  +  3 t^{ 2 p + 3 p^2 + 3 p^3 }  +  3 t^{ 2 p + 4 p^2 + p^3 }  + t^{ 2 p + 4 p^2 + 2 p^3 }  +  3 t^{ 2 p + 4 p^2 + 3 p^3 }  +  3 t^{ 3 p }  + t^{ 3 p + p^3 }  + t^{ 3 p + p^2 }  +  3 t^{ 3 p + p^2 + p^3 }  +  3 t^{ 3 p + p^2 + 2 p^3 }  +  3 t^{ 3 p + 2 p^2 }  + t^{ 3 p + 2 p^2 + p^3 }  + t^{ 3 p + 2 p^2 + 2 p^3 }  + t^{ 3 p + 2 p^2 + 3 p^3 }  +  3 t^{ 3 p + 3 p^2 }  + t^{ 3 p + 3 p^2 + p^3 }  +  3 t^{ 3 p + 3 p^2 + 2 p^3 }  +  3 t^{ 3 p + 3 p^2 + 3 p^3 }  + t^{ 3 p + 4 p^2 + p^3 }  +  3 t^{ 3 p + 4 p^2 + 2 p^3 }  + t^{ 3 p + 4 p^2 + 3 p^3 }  +  2 t  + t^{ 1 + p^3 }  + t^{ 1 + 2 p^3 }  + t^{ 1 + 3 p^3 }  +  3 t^{ 1 + p^2 }  +  2 t^{ 1 + p^2 + p^3 }  + t^{ 1 + p^2 + 2 p^3 }  + t^{ 1 + p^2 + 4 p^3 }  +  3 t^{ 1 + 2 p^2 }  + t^{ 1 + 2 p^2 + p^3 }  +  4 t^{ 1 + 2 p^2 + 2 p^3 }  +  3 t^{ 1 + 2 p^2 + 3 p^3 }  +  3 t^{ 1 + 2 p^2 + 4 p^3 }  + t^{ 1 + 3 p^2 }  +  3 t^{ 1 + 3 p^2 + p^3 }  + t^{ 1 + 3 p^2 + 2 p^3 }  + t^{ 1 + 3 p^2 + 3 p^3 }  + t^{ 1 + 3 p^2 + 4 p^3 }  + t^{ 1 + p }  +  2 t^{ 1 + p + p^3 }  + t^{ 1 + p + 2 p^3 }  +  3 t^{ 1 + p + 3 p^3 }  +  2 t^{ 1 + p + p^2 }  + t^{ 1 + p + 2 p^2 }  +  3 t^{ 1 + p + 3 p^2 }  + t^{ 1 + 2 p }  + t^{ 1 + 2 p + p^3 }  +  5 t^{ 1 + 2 p + 2 p^3 }  +  3 t^{ 1 + 2 p + 3 p^3 }  + t^{ 1 + 2 p + p^2 }  +  4 t^{ 1 + 2 p + 2 p^2 }  + t^{ 1 + 2 p + 3 p^2 }  + t^{ 1 + 3 p }  +  3 t^{ 1 + 3 p + p^3 }  +  3 t^{ 1 + 3 p + 2 p^3 }  +  3 t^{ 1 + 3 p + 2 p^2 }  + t^{ 1 + 3 p + 3 p^2 }  + t^{ 1 + 4 p + p^2 }  +  3 t^{ 1 + 4 p + 2 p^2 }  + t^{ 1 + 4 p + 3 p^2 }  +  2 t^{ 2 }  + t^{ 2 + p^3 }  +  2 t^{ 2 + 2 p^3 }  +  3 t^{ 2 + 3 p^3 }  +  3 t^{ 2 + p^2 }  + t^{ 2 + p^2 + p^3 }  +  4 t^{ 2 + p^2 + 2 p^3 }  +  3 t^{ 2 + p^2 + 3 p^3 }  +  3 t^{ 2 + p^2 + 4 p^3 }  + t^{ 2 + 2 p^2 }  +  5 t^{ 2 + 2 p^2 + p^3 }  +  2 t^{ 2 + 2 p^2 + 2 p^3 }  +  2 t^{ 2 + 2 p^2 + 3 p^3 }  + t^{ 2 + 2 p^2 + 4 p^3 }  +  3 t^{ 2 + 3 p^2 + p^3 }  + t^{ 2 + 3 p^2 + 2 p^3 }  +  3 t^{ 2 + 3 p^2 + 3 p^3 }  +  3 t^{ 2 + 3 p^2 + 4 p^3 }  + t^{ 2 + p }  + t^{ 2 + p + p^3 }  +  4 t^{ 2 + p + 2 p^3 }  + t^{ 2 + p + 3 p^3 }  + t^{ 2 + p + p^2 }  +  5 t^{ 2 + p + 2 p^2 }  +  3 t^{ 2 + p + 3 p^2 }  +  2 t^{ 2 + 2 p }  +  4 t^{ 2 + 2 p + p^3 }  +  2 t^{ 2 + 2 p + 2 p^3 }  + t^{ 2 + 2 p + 3 p^3 }  +  4 t^{ 2 + 2 p + p^2 }  +  2 t^{ 2 + 2 p + 2 p^2 }  + t^{ 2 + 2 p + 3 p^2 }  +  3 t^{ 2 + 3 p }  + t^{ 2 + 3 p + p^3 }  + t^{ 2 + 3 p + 2 p^3 }  + t^{ 2 + 3 p + 3 p^3 }  +  3 t^{ 2 + 3 p + p^2 }  +  2 t^{ 2 + 3 p + 2 p^2 }  +  3 t^{ 2 + 3 p + 3 p^2 }  +  3 t^{ 2 + 4 p + p^2 }  + t^{ 2 + 4 p + 2 p^2 }  +  3 t^{ 2 + 4 p + 3 p^2 }  +  3 t^{ 3 }  + t^{ 3 + p^3 }  +  3 t^{ 3 + 2 p^3 }  +  3 t^{ 3 + 3 p^3 }  + t^{ 3 + p^2 }  +  3 t^{ 3 + p^2 + p^3 }  + t^{ 3 + p^2 + 2 p^3 }  + t^{ 3 + p^2 + 3 p^3 }  + t^{ 3 + p^2 + 4 p^3 }  +  3 t^{ 3 + 2 p^2 + p^3 }  + t^{ 3 + 2 p^2 + 2 p^3 }  +  3 t^{ 3 + 2 p^2 + 3 p^3 }  +  3 t^{ 3 + 2 p^2 + 4 p^3 }  + t^{ 3 + 3 p^2 + 2 p^3 }  +  3 t^{ 3 + 3 p^2 + 3 p^3 }  + t^{ 3 + 3 p^2 + 4 p^3 }  + t^{ 3 + p }  +  3 t^{ 3 + p + 2 p^3 }  + t^{ 3 + p + 3 p^3 }  +  3 t^{ 3 + p + p^2 }  +  3 t^{ 3 + p + 2 p^2 }  +  3 t^{ 3 + 2 p }  +  3 t^{ 3 + 2 p + p^3 }  +  2 t^{ 3 + 2 p + 2 p^3 }  +  3 t^{ 3 + 2 p + 3 p^3 }  + t^{ 3 + 2 p + p^2 }  + t^{ 3 + 2 p + 2 p^2 }  + t^{ 3 + 2 p + 3 p^2 }  +  3 t^{ 3 + 3 p }  + t^{ 3 + 3 p + p^3 }  +  3 t^{ 3 + 3 p + 2 p^3 }  +  3 t^{ 3 + 3 p + 3 p^3 }  + t^{ 3 + 3 p + p^2 }  +  3 t^{ 3 + 3 p + 2 p^2 }  +  3 t^{ 3 + 3 p + 3 p^2 }  + t^{ 3 + 4 p + p^2 }  +  3 t^{ 3 + 4 p + 2 p^2 }  + t^{ 3 + 4 p + 3 p^2 }  + t^{ 4 + p + p^3 }  +  3 t^{ 4 + p + 2 p^3 }  + t^{ 4 + p + 3 p^3 }  +  3 t^{ 4 + 2 p + p^3 }  + t^{ 4 + 2 p + 2 p^3 }  +  3 t^{ 4 + 2 p + 3 p^3 }  + t^{ 4 + 3 p + p^3 }  +  3 t^{ 4 + 3 p + 2 p^3 }  + t^{ 4 + 3 p + 3 p^3 } \right)\end{dmath*}\begin{dmath*} 
 + s^{11} \left( 1  + t^{ p^3 }  + t^{ 2 p^3 }  + t^{ 3 p^3 }  + t^{ p^2 }  + t^{ p^2 + p^3 }  + t^{ 2 p^2 }  +  3 t^{ 2 p^2 + 2 p^3 }  +  3 t^{ 2 p^2 + 3 p^3 }  + t^{ 3 p^2 }  +  3 t^{ 3 p^2 + 2 p^3 }  + t^{ 3 p^2 + 3 p^3 }  + t^{ p }  +  2 t^{ p + p^3 }  + t^{ p + 2 p^3 }  + t^{ p + p^2 }  +  2 t^{ p + p^2 + p^3 }  +  2 t^{ p + p^2 + 2 p^3 }  +  3 t^{ p + p^2 + 3 p^3 }  +  2 t^{ p + 2 p^2 + p^3 }  +  2 t^{ p + 2 p^2 + 2 p^3 }  + t^{ p + 2 p^2 + 3 p^3 }  + t^{ p + 3 p^2 + 2 p^3 }  + t^{ p + 4 p^2 + 2 p^3 }  + t^{ 2 p }  + t^{ 2 p + p^3 }  +  2 t^{ 2 p + 2 p^3 }  +  2 t^{ 2 p + p^2 + p^3 }  +  4 t^{ 2 p + p^2 + 2 p^3 }  + t^{ 2 p + p^2 + 3 p^3 }  +  3 t^{ 2 p + 2 p^2 }  +  2 t^{ 2 p + 2 p^2 + p^3 }  + t^{ 2 p + 2 p^2 + 2 p^3 }  +  3 t^{ 2 p + 3 p^2 }  + t^{ 2 p + 3 p^2 + p^3 }  +  3 t^{ 2 p + 3 p^2 + 2 p^3 }  +  3 t^{ 2 p + 3 p^2 + 3 p^3 }  + t^{ 2 p + 4 p^2 + p^3 }  +  2 t^{ 2 p + 4 p^2 + 2 p^3 }  + t^{ 2 p + 4 p^2 + 3 p^3 }  + t^{ 3 p }  +  3 t^{ 3 p + p^2 + p^3 }  + t^{ 3 p + p^2 + 2 p^3 }  +  3 t^{ 3 p + 2 p^2 }  + t^{ 3 p + 2 p^2 + p^3 }  + t^{ 3 p + 3 p^2 }  +  3 t^{ 3 p + 3 p^2 + 2 p^3 }  + t^{ 3 p + 3 p^2 + 3 p^3 }  + t^{ 3 p + 4 p^2 + 2 p^3 }  + t  + t^{ 1 + p^3 }  +  2 t^{ 1 + p^2 }  +  2 t^{ 1 + p^2 + p^3 }  +  2 t^{ 1 + p^2 + 2 p^3 }  + t^{ 1 + 2 p^2 }  +  2 t^{ 1 + 2 p^2 + p^3 }  +  2 t^{ 1 + 2 p^2 + 2 p^3 }  + t^{ 1 + 2 p^2 + 3 p^3 }  + t^{ 1 + 2 p^2 + 4 p^3 }  +  3 t^{ 1 + 3 p^2 + p^3 }  + t^{ 1 + 3 p^2 + 2 p^3 }  + t^{ 1 + p }  +  2 t^{ 1 + p + p^3 }  +  2 t^{ 1 + p + 2 p^3 }  +  3 t^{ 1 + p + 3 p^3 }  +  2 t^{ 1 + p + p^2 }  +  2 t^{ 1 + p + 2 p^2 }  +  3 t^{ 1 + p + 3 p^2 }  +  2 t^{ 1 + 2 p + p^3 }  +  4 t^{ 1 + 2 p + 2 p^3 }  + t^{ 1 + 2 p + 3 p^3 }  +  2 t^{ 1 + 2 p + p^2 }  +  2 t^{ 1 + 2 p + 2 p^2 }  + t^{ 1 + 2 p + 3 p^2 }  +  3 t^{ 1 + 3 p + p^3 }  + t^{ 1 + 3 p + 2 p^3 }  + t^{ 1 + 3 p + 2 p^2 }  + t^{ 1 + 4 p + 2 p^2 }  + t^{ 2 }  +  3 t^{ 2 + 2 p^3 }  +  3 t^{ 2 + 3 p^3 }  + t^{ 2 + p^2 }  +  2 t^{ 2 + p^2 + p^3 }  +  2 t^{ 2 + p^2 + 2 p^3 }  + t^{ 2 + p^2 + 3 p^3 }  + t^{ 2 + p^2 + 4 p^3 }  +  2 t^{ 2 + 2 p^2 }  +  4 t^{ 2 + 2 p^2 + p^3 }  + t^{ 2 + 2 p^2 + 2 p^3 }  +  3 t^{ 2 + 2 p^2 + 3 p^3 }  +  2 t^{ 2 + 2 p^2 + 4 p^3 }  + t^{ 2 + 3 p^2 + p^3 }  +  3 t^{ 2 + 3 p^2 + 3 p^3 }  + t^{ 2 + 3 p^2 + 4 p^3 }  +  2 t^{ 2 + p + p^3 }  +  2 t^{ 2 + p + 2 p^3 }  + t^{ 2 + p + 3 p^3 }  +  2 t^{ 2 + p + p^2 }  +  4 t^{ 2 + p + 2 p^2 }  + t^{ 2 + p + 3 p^2 }  +  3 t^{ 2 + 2 p }  +  2 t^{ 2 + 2 p + p^3 }  + t^{ 2 + 2 p + 2 p^3 }  +  2 t^{ 2 + 2 p + p^2 }  + t^{ 2 + 2 p + 2 p^2 }  +  3 t^{ 2 + 3 p }  + t^{ 2 + 3 p + p^3 }  + t^{ 2 + 3 p + p^2 }  +  3 t^{ 2 + 3 p + 2 p^2 }  +  3 t^{ 2 + 3 p + 3 p^2 }  + t^{ 2 + 4 p + p^2 }  +  2 t^{ 2 + 4 p + 2 p^2 }  + t^{ 2 + 4 p + 3 p^2 }  + t^{ 3 }  +  3 t^{ 3 + 2 p^3 }  + t^{ 3 + 3 p^3 }  +  3 t^{ 3 + p^2 + p^3 }  + t^{ 3 + p^2 + 2 p^3 }  + t^{ 3 + 2 p^2 + p^3 }  +  3 t^{ 3 + 2 p^2 + 3 p^3 }  + t^{ 3 + 2 p^2 + 4 p^3 }  + t^{ 3 + 3 p^2 + 3 p^3 }  + t^{ 3 + p + 2 p^3 }  +  3 t^{ 3 + p + p^2 }  + t^{ 3 + p + 2 p^2 }  +  3 t^{ 3 + 2 p }  + t^{ 3 + 2 p + p^3 }  +  3 t^{ 3 + 2 p + 2 p^3 }  +  3 t^{ 3 + 2 p + 3 p^3 }  + t^{ 3 + 2 p + p^2 }  + t^{ 3 + 3 p }  +  3 t^{ 3 + 3 p + 2 p^3 }  + t^{ 3 + 3 p + 3 p^3 }  +  3 t^{ 3 + 3 p + 2 p^2 }  + t^{ 3 + 3 p + 3 p^2 }  + t^{ 3 + 4 p + 2 p^2 }  + t^{ 4 + p + 2 p^3 }  + t^{ 4 + 2 p + p^3 }  +  2 t^{ 4 + 2 p + 2 p^3 }  + t^{ 4 + 2 p + 3 p^3 }  + t^{ 4 + 3 p + 2 p^3 } \right)
 + s^{12} \left( 1  +  2 t^{ p^2 + p^3 }  +  3 t^{ 2 p^2 + 2 p^3 }  + t^{ 2 p^2 + 3 p^3 }  + t^{ 3 p^2 + 2 p^3 }  +  3 t^{ p + p^3 }  +  2 t^{ p + 2 p^3 }  +  2 t^{ p + p^2 }  + t^{ p + p^2 + p^3 }  + t^{ p + p^2 + 2 p^3 }  + t^{ p + p^2 + 3 p^3 }  + t^{ p + 2 p^2 + p^3 }  + t^{ p + 2 p^2 + 2 p^3 }  +  2 t^{ p + 2 p^2 + 3 p^3 }  +  2 t^{ 2 p + p^3 }  + t^{ 2 p + 2 p^3 }  + t^{ 2 p + p^2 + p^3 }  + t^{ 2 p + p^2 + 2 p^3 }  +  3 t^{ 2 p + 2 p^2 }  + t^{ 2 p + 2 p^2 + p^3 }  +  2 t^{ 2 p + 2 p^2 + 2 p^3 }  + t^{ 2 p + 3 p^2 }  +  3 t^{ 2 p + 3 p^2 + 2 p^3 }  + t^{ 2 p + 3 p^2 + 3 p^3 }  + t^{ 2 p + 4 p^2 + 2 p^3 }  + t^{ 3 p + p^2 + p^3 }  + t^{ 3 p + 2 p^2 }  +  2 t^{ 3 p + 2 p^2 + p^3 }  + t^{ 3 p + 3 p^2 + 2 p^3 }  +  2 t^{ 1 + p^3 }  +  3 t^{ 1 + p^2 }  + t^{ 1 + p^2 + p^3 }  + t^{ 1 + p^2 + 2 p^3 }  +  2 t^{ 1 + 2 p^2 }  + t^{ 1 + 2 p^2 + p^3 }  + t^{ 1 + 2 p^2 + 2 p^3 }  + t^{ 1 + 3 p^2 + p^3 }  +  2 t^{ 1 + 3 p^2 + 2 p^3 }  +  2 t^{ 1 + p }  + t^{ 1 + p + p^3 }  + t^{ 1 + p + 2 p^3 }  + t^{ 1 + p + 3 p^3 }  + t^{ 1 + p + p^2 }  + t^{ 1 + p + 2 p^2 }  + t^{ 1 + p + 3 p^2 }  + t^{ 1 + 2 p + p^3 }  + t^{ 1 + 2 p + 2 p^3 }  + t^{ 1 + 2 p + p^2 }  + t^{ 1 + 2 p + 2 p^2 }  +  2 t^{ 1 + 2 p + 3 p^2 }  + t^{ 1 + 3 p + p^3 }  +  3 t^{ 2 + 2 p^3 }  + t^{ 2 + 3 p^3 }  +  2 t^{ 2 + p^2 }  + t^{ 2 + p^2 + p^3 }  + t^{ 2 + p^2 + 2 p^3 }  + t^{ 2 + 2 p^2 }  + t^{ 2 + 2 p^2 + p^3 }  +  2 t^{ 2 + 2 p^2 + 2 p^3 }  +  3 t^{ 2 + 2 p^2 + 3 p^3 }  + t^{ 2 + 2 p^2 + 4 p^3 }  + t^{ 2 + 3 p^2 + 3 p^3 }  + t^{ 2 + p + p^3 }  + t^{ 2 + p + 2 p^3 }  +  2 t^{ 2 + p + 3 p^3 }  + t^{ 2 + p + p^2 }  + t^{ 2 + p + 2 p^2 }  +  3 t^{ 2 + 2 p }  + t^{ 2 + 2 p + p^3 }  +  2 t^{ 2 + 2 p + 2 p^3 }  + t^{ 2 + 2 p + p^2 }  +  2 t^{ 2 + 2 p + 2 p^2 }  + t^{ 2 + 3 p }  +  2 t^{ 2 + 3 p + p^3 }  +  3 t^{ 2 + 3 p + 2 p^2 }  + t^{ 2 + 3 p + 3 p^2 }  + t^{ 2 + 4 p + 2 p^2 }  + t^{ 3 + 2 p^3 }  + t^{ 3 + p^2 + p^3 }  +  2 t^{ 3 + p^2 + 2 p^3 }  + t^{ 3 + 2 p^2 + 3 p^3 }  + t^{ 3 + p + p^2 }  + t^{ 3 + 2 p }  +  3 t^{ 3 + 2 p + 2 p^3 }  + t^{ 3 + 2 p + 3 p^3 }  +  2 t^{ 3 + 2 p + p^2 }  + t^{ 3 + 3 p + 2 p^3 }  + t^{ 3 + 3 p + 2 p^2 }  + t^{ 4 + 2 p + 2 p^3 } \right)
 + s^{13} \left( 1  + t^{ p^2 + p^3 }  + t^{ 2 p^2 + 2 p^3 }  +  3 t^{ p + p^3 }  + t^{ p + 2 p^3 }  + t^{ p + p^2 }  + t^{ p + p^2 + p^3 }  +  2 t^{ p + 2 p^2 + 2 p^3 }  + t^{ p + 2 p^2 + 3 p^3 }  + t^{ 2 p + p^3 }  + t^{ 2 p + 2 p^2 }  +  2 t^{ 2 p + 2 p^2 + p^3 }  + t^{ 2 p + 2 p^2 + 2 p^3 }  + t^{ 2 p + 3 p^2 + 2 p^3 }  + t^{ 3 p + 2 p^2 + p^3 }  + t^{ 1 + p^3 }  +  3 t^{ 1 + p^2 }  + t^{ 1 + p^2 + p^3 }  + t^{ 1 + 2 p^2 }  +  2 t^{ 1 + 2 p^2 + 2 p^3 }  + t^{ 1 + 3 p^2 + 2 p^3 }  + t^{ 1 + p }  + t^{ 1 + p + p^3 }  + t^{ 1 + p + p^2 }  +  2 t^{ 1 + 2 p + 2 p^2 }  + t^{ 1 + 2 p + 3 p^2 }  + t^{ 2 + 2 p^3 }  + t^{ 2 + p^2 }  +  2 t^{ 2 + p^2 + 2 p^3 }  + t^{ 2 + 2 p^2 + 2 p^3 }  + t^{ 2 + 2 p^2 + 3 p^3 }  +  2 t^{ 2 + p + 2 p^3 }  + t^{ 2 + p + 3 p^3 }  + t^{ 2 + 2 p }  +  2 t^{ 2 + 2 p + p^3 }  + t^{ 2 + 2 p + 2 p^3 }  +  2 t^{ 2 + 2 p + p^2 }  + t^{ 2 + 2 p + 2 p^2 }  + t^{ 2 + 3 p + p^3 }  + t^{ 2 + 3 p + 2 p^2 }  + t^{ 3 + p^2 + 2 p^3 }  + t^{ 3 + 2 p + 2 p^3 }  + t^{ 3 + 2 p + p^2 } \right)
 + s^{14} \left( t^{ p + p^3 }  +  2 t^{ p + p^2 + p^3 }  + t^{ p + 2 p^2 + 2 p^3 }  + t^{ 2 p + 2 p^2 + p^3 }  + t^{ 1 + p^2 }  +  2 t^{ 1 + p^2 + p^3 }  + t^{ 1 + 2 p^2 + 2 p^3 }  +  2 t^{ 1 + p + p^3 }  +  2 t^{ 1 + p + p^2 }  + t^{ 1 + 2 p + 2 p^2 }  + t^{ 2 + p^2 + 2 p^3 }  + t^{ 2 + p + 2 p^3 }  + t^{ 2 + 2 p + p^3 }  + t^{ 2 + 2 p + p^2 } \right)
 + s^{15} \left( 1  + t^{ p + p^2 + p^3 }  + t^{ 1 + p^2 + p^3 }  + t^{ 1 + p + p^3 }  + t^{ 1 + p + p^2 } \right)
 + s^{16}
\end{dmath*} 
\end{itemize}

The total $\mathbb{F}_p$-vector space dimension of the mod $p$ cohomology of the height $0,1,2,3,$ and $4$ Morava stabilizer group schemes, for $p>n+1$, is $1,2,12,152,$ and $3440$, respectively. These numbers fit into a distinct recursive pattern which bears some kind of resemblance to the way that cohomology of large-height Morava stabilizer group schemes is assembled from cohomology of lower-height Morava stabilizer group schemes, as in Strategy~\ref{main strategy}. I do not know if there is a connection between the numerical phenomenon and the cohomological phenomenon.

The cohomology $H^*(\strictAut(\mathbb{G}_{1/4}); \mathbb{F}_p)$, tensored with $\mathbb{F}_p[v_4^{\pm 1}]$, is the input for the $E(4)$-Adams spectral sequence computing $\pi_*(L_{E(4)}V(3))$, or equivalently (since $V(3)$ is $E(3)$-acyclic), $\pi_*(L_{K(4)}V(3))$. There is no room for nonzero differentials in this spectral sequence, and consequently $H^*(\strictAut(\mathbb{G}_{1/4}); \mathbb{F}_p)\otimes_{\mathbb{F}_p} \mathbb{F}_p[v_4^{\pm 1}]$ is isomorphic, after an appropriate regrading, to $\pi_*(L_{K(4)}V(3))$. At primes less than seven, $V(3)$ is known not to exist, so the computations in this document recover $\pi_*(L_{K(4)}V(3))$ at all primes at which $V(3)$ exists.
I have been told that M. Mahowald conjectured that $V(3)$ is the last Smith-Toda complex to exist, that is, $V(n)$ does not exist for $n>3$ and for all primes; if Mahowald's conjecture is true, then the computations in this document finish the problem of computing the $K(n)$-local stable homotopy groups of the Smith-Toda complex $V(n-1)$.

Although this document focuses on the cohomology of the Morava stabilizer group schemes at large primes, the methods in this document can be applied to the case $p = n+1$, in which case $H^*(\strictAut(\mathbb{G}_{1/n}); \mathbb{F}_p)$ is known to be infinite-dimensional. The computations when $p=n+1$ are more difficult, partly because the Lie-May spectral sequences~\ref{may 1 ss} and~\ref{relative may 1 ss} have nonzero differentials, and partly because the algebra is slightly different: one needs to use formal groups with complex multiplication by $\hat{\mathbb{Z}}_p\left[ \zeta_p\right] = \hat{\mathbb{Z}}_p\left[ \sqrt{-p}\right]$, rather than $\hat{\mathbb{Z}}_p\left[ \sqrt{p}\right]$, and this leads to a slightly different filtration of $\mathcal{K}(2n,2n)$ from the ``arithmetic filtration'' defined in Definition~\ref{def of K dga}.

This project is a continuation of the sequence of ideas found in chapter 6 of Ravenel's~\cite{MR860042}, in which the ideas of Milnor and Moore from~\cite{MR0174052} and of May from~\cite{MR2614527} are put to use to compute the cohomology of Morava stabilizer groups. 
The profound influence of D.C. Ravenel on this work should be 
unmistakable. I am grateful to Ravenel for his generous guidance and help.

\begin{convention}
All formal groups in this document are assumed commutative and one-dimensional.
\end{convention}

\section{Setting up the spectral sequences.}

The following sequence of integers was defined in Theorem~6.3.1 of~\cite{MR860042}.
\begin{definition}{\bf (Ravenel's numbers.)} \label{ravenel numbers} Fix a prime number $p$ and a positive integer $n$. Let $d_{n,i}$ be the integer defined by the formula
\[ d_{n,i} = \left\{ \begin{array}{ll} 0 & \mbox{\ if\ } i\leq 0 \\ \max\{ i,pd_{n,i-n}\} & \mbox{\ if\ } i>0.\end{array} \right. \]
(Clearly $d_{n,i}$ depends on the prime number $p$, but the choice of prime $p$ is suppressed from the notation for $d_{n,i}$.)
\end{definition}

The differential graded algebras we consider all admit an action by $C_n$. We fix a generator for $C_n$ and we write $\sigma$ for this generator.

\begin{definition}\label{def of K dga}
Let $p$ be a prime, $m$ a positive integer, $n$ a positive integer, and let $\mathcal{K}(n,m)$ denote the $\mathbb{Z}$-graded, $\mathbb{Z}/2(p^{n}-1)\mathbb{Z}$-graded,
$C_n$-equivariant differential graded $\mathbb{F}_p$-algebra 
\[\mathcal{K}(n,m) = \Lambda\left(h_{i,j}: 1\leq i\leq m, 0\leq j< n\right) \]
with differential
\[ d(h_{i,j}) = \sum_{k=1}^{i-1} h_{k,j} h_{i-k,j+k},\]
with the convention that $h_{i,k+n} = h_{i,k}$.
The gradings and $C_n$-action are as follows:
\begin{itemize}
\item
the cohomological grading is given by $\left| h_{i,j}\right| = 1$, 
\item 
the $\mathbb{Z}$-grading other than the cohomological $\mathbb{Z}$-grading is called the ``Ravenel grading,'' and is given by $\left| \left| h_{i,j}\right| \right| = d_{n,i}$ (the numbers $d_{n,i}$ are defined in Definition~\ref{ravenel numbers}),
\item the $\mathbb{Z}/2(p^{n}-1)\mathbb{Z}$-grading is called the ``internal'' (also called ``topological'') grading, and 
is given by $\left|\left|\left| h_{i,j} \right|\right| \right| = 2(p^i-1)p^j$,
\item and the $C_n$-action is given by $\sigma(h_{i,j}) = h_{i,j+1}$. Note that the $C_n$-action preserves the cohomological gradings and the Ravenel grading, but not the internal grading; this behavior will be typical in all of the multigraded DGAs we consider, and we adopt the convention that, whenever we speak of a ``multigraded equivariant DGA,'' we assume that the group action preserves all of the gradings except possibly the internal grading.
\end{itemize}

If $n$ is even, then filter $\mathcal{K}(n,m)$ by powers of the ideal $I_{n,m} = (
h_{i,j} - h_{i,j+n/2}: 1\leq i\leq m, 0\leq j<n)$.
We will write $E_0\mathcal{K}(n,m
)$ for the associated graded differential graded algebra of this filtration on $\mathcal{K}(n,m
)$.
\end{definition}
One checks easily that the ideal $I_{n,m}
$ is a $C_n$-equivariant differential graded Hopf ideal which is generated by elements that are homogeneous in both the Ravenel grading and the internal grading. Consequently $E_0\mathcal{K}(n,m
)$ is a bigraded $C_n$-equivariant $\mathbb{Z}/2(p^{n/2}-1)\mathbb{Z}$-graded differential graded $\mathbb{F}_p$-algebra, i.e.,  $E_0\mathcal{K}(n,m
)$ has a total of four gradings. (The reader who is unhappy with a quad-graded equivariant differential algebra will perhaps be reassured to know that any of these gradings, as well as the $C_n$-action, can be safely ignored when reading this document, as long as one is not trying to reproduce the spectral sequence calculations; the only ways in which all these gradings are used is to eliminate possible differentials in various spectral sequences later on in the document, by observing that the spectral sequence differentials respect these gradings. The $C_n$-action is similarly only used to reduce the amount of work in spectral sequence computations, by observing that certain spectral sequence differentials commute with the $C_n$-action.)
\begin{convention}
We adopt the convention that, given an element $x\in \mathcal{K}(n,m
)$,
\begin{itemize}
\item
$\left| x\right|$ denotes the cohomological degree,
\item  $\left|\left| x \right|\right|$ denotes the Ravenel degree,
\item and $\left|\left|\left| x \right|\right|\right|$ denotes the internal degree (that is, the degree in the $\mathbb{Z}/2(p^{n/2}-1)\mathbb{Z}$-grading).
\item When we need to describe all three gradings at once, we will say that $x$ is in tridegree $\left( \left|x\right|, \left|\left|x\right|\right|, \left|\left|\left|x\right|\right|\right|\right)$.
\item If $x$ is instead an element of $E_0\mathcal{K}(n,m)$, then we will use $\left| x\right|, \left|\left| x\right|\right|, \left|\left|\left| x\right|\right|\right|$ as above, and we will write $\left|\left|\left|\left| x\right|\right|\right|\right|$ for the arithmetic degree, i.e., the grading on $E_0\mathcal{K}(n,m)$ induced by the filtration by powers of $I$ on $\mathcal{K}(n,m)$. We then say that $x$ is in quad-degree $\left( \left|x\right|, \left|\left|x\right|\right|, \left|\left|\left|x\right|\right|\right|, \left|\left|\left|\left| x\right|\right|\right|\right|\right)$.
\item We also write ``graded DGA'' to mean a DGA with one grading {\em in addition to} its cohomological grading, `bigraded DGA'' to mean a DGA with two gradings {\em in addition to} its cohomological grading, and so on.
\end{itemize}
\end{convention}

\begin{prop}\label{presentation for associated graded}
The $C_n$-equivariant trigraded DGA $E_0\mathcal{K}(n,m
)$ is isomorphic to the exterior algebra
\[ \Lambda\left( h_{i,j}, w_{i,j^{\prime}}: 1\leq i\leq m, 0\leq j < n/2, 0\leq j^{\prime} \leq n/2 \right) ,\] 
with $h_{i,j}$ in quad-degree $(1,d_{n,i},2(p^i-1)p^j,0)$, with $w_{i,j^{\prime}}$ in tridegree $(1,d_{n,i},2(p^i-1)p^j,1)$,
with $C_n$-action given by $\sigma(h_{i,j}) = h_{i,j+1}$ and $\sigma(w_{i,j}) = w_{i,j+1}$, and with differential given by
\begin{align*}
 d(h_{i,j}) &= \sum_{k=1}^{i-1} h_{k,j} h_{i-k,j+k},\\ 
 d(w_{i,j}) &= \sum_{k=1}^{i-1} \left( 
     w_{k,j} h_{i-k,j+k} + 
     h_{k,j}w_{i-k,j+k}\right)
\end{align*}
with the convention that $h_{i,j+n/2} = 
h_{i,j}$ and $w_{i,j+n/2} = -
w_{i,j}$.
\end{prop}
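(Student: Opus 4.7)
The plan is to make a change of generators of the exterior algebra $\mathcal{K}(n,m)$ that realizes the filtration by powers of $I_{n,m}$ as coming from a direct sum decomposition of the generating space. Specifically, for $1 \leq i \leq m$ and $0 \leq j < n/2$, I will set $w_{i,j} = h_{i,j} - h_{i,j+n/2}$ in $\mathcal{K}(n,m)$. Since $h_{i,j+n/2} = h_{i,j} - w_{i,j}$, the collection $\{h_{i,j}, w_{i,j} : 1 \leq i \leq m,\, 0 \leq j < n/2\}$ is also a set of free exterior generators for $\mathcal{K}(n,m)$, and by construction $I_{n,m}$ is the ideal generated by the $w_{i,j}$. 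Hence $I_{n,m}^k$ is $\mathbb{F}_p$-spanned by monomials containing at least $k$ factors of $w$-type, and as a commutative $\mathbb{F}_p$-algebra $E_0\mathcal{K}(n,m)$ becomes the exterior algebra on the classes of the $h_{i,j}$ in arithmetic degree $0$ and of the $w_{i,j}$ in arithmetic degree $1$, matching the claimed presentation (with the convention $w_{i,j+n/2} = -w_{i,j}$ identifying any redundant generator indexed by $j' = n/2$).

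Next I would verify that the other three gradings and the $C_n$-action descend as stated. The cohomological and Ravenel gradings are preserved under this change of generators because $h_{i,j}$ and $h_{i,j+n/2}$ agree in both, so $w_{i,j}$ automatically inherits cohomological degree $1$ and Ravenel degree $d_{n,i}$. The internal degrees $2(p^i-1)p^j$ and $2(p^i-1)p^{j+n/2}$ differ in $\mathbb{Z}/2(p^n-1)$ in general but are congruent modulo $2(p^{n/2}-1)$, since $p^{n/2} \equiv 1 \pmod{p^{n/2}-1}$; this is exactly what forces the coarsening of the internal grading to $\mathbb{Z}/2(p^{n/2}-1)$ on $E_0$. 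The $C_n$-action satisfies $\sigma(w_{i,j}) = h_{i,j+1} - h_{i,j+1+n/2} = w_{i,j+1}$, and $w_{i,j+n/2} = h_{i,j+n/2} - h_{i,j} = -w_{i,j}$ gives the stated sign convention.

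The final computational step is the differential. The formula for $d(h_{i,j})$ is inherited verbatim from $\mathcal{K}(n,m)$ once the convention $h_{i,j+n/2} = h_{i,j}$ is imposed on $E_0$. For $w$, substituting $h_{a,b+n/2} = h_{a,b} - w_{a,b}$ into $d(h_{i,j+n/2}) = \sum_k h_{k,j+n/2}h_{i-k,j+k+n/2}$ and subtracting from $d(h_{i,j})$ produces
\[ d(w_{i,j}) = \sum_{k=1}^{i-1}\bigl[ h_{k,j}w_{i-k,j+k} + w_{k,j}h_{i-k,j+k} - w_{k,j}w_{i-k,j+k}\bigr] \]
in $\mathcal{K}(n,m)$. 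The right-hand side lies in $I_{n,m}$, so the Leibniz rule forces $d$ to preserve the entire filtration and descend to $E_0\mathcal{K}(n,m)$; modulo $I_{n,m}^2$ the final term vanishes, yielding exactly the formula claimed. The main obstacle in the whole argument is not conceptual but organizational: one must check carefully that the filtration is compatible with all three pre-existing gradings, particularly that the internal grading must be coarsened to $\mathbb{Z}/2(p^{n/2}-1)$ for $I_{n,m}$ to be a homogeneous ideal — once this observation is in hand, the rest is a direct derivation-rule calculation.
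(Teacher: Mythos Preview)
Your proof is correct and follows exactly the approach the paper takes: the paper defines $h_{i,j}$ and $w_{i,j}$ in $E_0\mathcal{K}(n,m)$ as the residue classes of $h_{i,j}$ modulo $I_{n,m}$ and of $h_{i,j}-h_{i,j+n/2}$ modulo $I_{n,m}^2$, respectively, and then declares the rest ``a routine computation.'' You have simply written out that routine computation in full, including the check that the $w\!\cdot\! w$ term in $d(w_{i,j})$ drops out modulo $I_{n,m}^2$ and the observation that the internal grading must be coarsened to $\mathbb{Z}/2(p^{n/2}-1)$.
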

\begin{proof}
Let $h_{i,j}\in E_0\mathcal{K}(n,m
)$ be the residue class of $h_{i,j}\in \mathcal{K}(n,m)$ modulo $I_{n,m}$, and let 
$w_{i,j}\in E_0\mathcal{K}(n,m
)$ be the residue class of $
h_{i,j}-h_{i,j+n/2} \in I_{n,m}
\subseteq \mathcal{K}(n,m)$ modulo $I_{n,m}^2$.
Then it is a routine computation to arrive at the given presentation for $E_0\mathcal{K}(n,m
)$.
\end{proof}

Recall that, given a formal group $\mathbb{G}$, the {\em strict automorphism group scheme of $\mathbb{G}$,} which I will write $\strictAut(\mathbb{G})$, is defined as the group scheme of automorphisms of $\mathbb{G}$ whose effect on the tangent space of $\mathbb{G}$ is the identity. In more concrete terms: given a commutative ring $R$ and a formal group law $F(X,Y)\in R[[X,Y]]$, an automorphism of $F$ is given by a power series $f(X)\in R[[X]]$, and we say that $f$ is strict if $f(X)\equiv X\mod X^2$. When $\mathbb{G}$ is defined over a finite field of characteristic $p$ and has $p$-height $n$, then $\strictAut(\mathbb{G})$ is sometimes called the {\em height $n$ strict Morava stabilizer group scheme.}
Reduction modulo the quadratic and higher terms gives us a short exact sequence of profinite group schemes
\[ 1 \rightarrow \strictAut(\mathbb{G}) \rightarrow \Aut(\mathbb{G}) \rightarrow \mathbb{G}_m \rightarrow 1\]
and since $R^{\times}$ is usually relatively understandable (e.g. when $R$ is a finite field), when some knowledge of the cohomology of $\Aut(\mathbb{G})$ is required, it is usually possible to arrive at that knowledge via the Lyndon-Hochschild-Serre spectral sequence, if one can compute the cohomology of $\strictAut(\mathbb{G})$.

\begin{theorem}{\bf (Ravenel.)}\label{ravenels may spectral sequences}
Let $\mathbb{G}$ be the formal group law of height $n$ over $\mathbb{F}_p$
which is classified by the map 
$BP_* \rightarrow \mathbb{F}_p$ sending $v_n$ to $1$ and sending $v_i$ to $0$ for all $i\neq n$.
Then there exist $C_n$-equivariant spectral sequences
\begin{align} 
\label{may 1 ss} E_2^{s,t,u,v} \cong H^{s,t,u}\left(\mathcal{K}(n,\floor{\frac{np}{p-1}})\right) \otimes_{\mathbb{F}_p} P\left(b_{i,j}: 1\leq i \leq \floor{\frac{n}{p-1}}, 0\leq j< n\right) &\Rightarrow 
\Cotor_{E_0\mathbb{F}_p[\strictAut(\mathbb{G})]^*}^{s,t,u}(\mathbb{F}_p,\mathbb{F}_p) \\
\nonumber d_r: E_r^{s,t,u,v} &\rightarrow E_r^{s+1,t,u,v+r-1} \\
\label{may 2 ss} E_1^{s,t,u} \cong \Cotor_{E_0\mathbb{F}_p[\strictAut(\mathbb{G})]^*}^{s,t,u}(\mathbb{F}_p,\mathbb{F}_p) &\Rightarrow H^s(\strictAut(\mathbb{G}); \mathbb{F}_p)\\
\nonumber d_r: E_r^{s,t,u} &\rightarrow E_r^{s+1,t-r,u}  ,\end{align}
where:
\begin{itemize}
\item $\floor{x}$ denotes the integer floor of $x$,
\item $b_{i,j}$ is in quad-degree $(2,pd_{n,i}, 2(p^i-1)p^{j+1},1)$,
\item $\sigma(b_{i,j}) = b_{i,j+1}$, 
\item $\mathbb{F}_p[\strictAut(\mathbb{G})]^*$ is the continuous $\mathbb{F}_p$-linear dual of the profinite group ring $\mathbb{F}_p[\strictAut(\mathbb{G})]$, 
\item
and where $E_0\mathbb{F}_p[\strictAut(\mathbb{G})]^*$ is the associated graded Hopf algebra of Ravenel's filtration (see~6.3.1 of~\cite{MR860042}) on $\mathbb{F}_p[\strictAut(\mathbb{G})]^*$.\end{itemize}
\end{theorem}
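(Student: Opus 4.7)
The plan is to treat the two spectral sequences separately, since despite being displayed together they arise from different constructions, and in each case to reduce the assertion to a standard machine applied to a structure that Ravenel has already set up in Chapter~6 of~\cite{MR860042}.

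For the second spectral sequence~\eqref{may 2 ss}, I would construct it as the spectral sequence of a filtered cobar complex. Ravenel's filtration on $\mathbb{F}_p[\strictAut(\mathbb{G})]^*$ (Definition~6.3.1 of~\cite{MR860042}) is a decreasing filtration of Hopf algebras, and passing to the cobar construction produces a bifiltered cochain complex. Standard homological machinery (e.g.\ the spectral sequence of a filtered complex, using that the filtration is complete Hausdorff on the cobar complex for profinite reasons) yields a spectral sequence with $E_1$-page $\Cotor^{*,*,*}_{E_0\mathbb{F}_p[\strictAut(\mathbb{G})]^*}(\mathbb{F}_p,\mathbb{F}_p)$ converging to $\Cotor^{*,*,*}_{\mathbb{F}_p[\strictAut(\mathbb{G})]^*}(\mathbb{F}_p,\mathbb{F}_p)$, which in turn is identified with $H^*(\strictAut(\mathbb{G});\mathbb{F}_p)$ by the standard identification of group-scheme cohomology with $\Cotor$ over the dual of the group ring. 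The differential $d_r\colon E_r^{s,t,u}\to E_r^{s+1,t-r,u}$ has the stated shape because Ravenel's filtration decreases by $r$ under a length-$r$ differential while raising cohomological degree by $1$; the internal degree is respected.

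For the first spectral sequence~\eqref{may 1 ss}, the key input is Ravenel's identification (Theorem~6.3.3 of~\cite{MR860042}) of the associated graded Hopf algebra $E_0\mathbb{F}_p[\strictAut(\mathbb{G})]^*$ as the restricted enveloping algebra of a restricted $\mathbb{F}_p$-Lie algebra $L_n$, which is primitively generated with primitives dual to the classes $h_{i,j}$. Once this is in hand, I would apply the May spectral sequence for $\Cotor$ over a restricted enveloping algebra of a restricted Lie algebra over $\mathbb{F}_p$ (this is the content of the classical May spectral sequence, cf.\ \cite{MR2614527}). The $E_2$-term is then the cohomology of the restricted Lie algebra $L_n$, which decomposes as a tensor product of the ordinary (non-restricted) Lie algebra cohomology with a polynomial algebra on classes $b_{i,j}$ representing the $p$th-power operation, exactly as displayed. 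The non-restricted Lie algebra cohomology is computed by the Chevalley--Eilenberg complex, which after dualization becomes precisely the DGA $\mathcal{K}(n,\lfloor np/(p-1)\rfloor)$ of Definition~\ref{def of K dga}; the upper bound $\lfloor np/(p-1)\rfloor$ on $i$ is exactly the range in which $h_{i,j}$ is indecomposable in the associated graded, beyond which it becomes a $p$th power of lower-index generators and is no longer an independent Lie algebra generator. The quad-degree of $b_{i,j}$ is forced by this identification with the $p$th-power operation: cohomological degree $2$, Ravenel degree $pd_{n,i}$, internal degree $p$ times that of $h_{i,j}$, and arithmetic degree $1$.

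The main obstacle, and the step where I would expect to spend most of the work, is the identification of $E_0\mathbb{F}_p[\strictAut(\mathbb{G})]^*$ with the restricted enveloping algebra of $L_n$ with the specified generators and relations, and the compatibility of all four gradings and the $C_n$-action throughout this identification. Once this structural identification is made and the May machinery for primitively generated Hopf algebras is invoked, both the shape of $E_2$ in~\eqref{may 1 ss} and the tridegrees of the generators fall out formally. The $C_n$-equivariance in both spectral sequences is automatic once one observes that Ravenel's filtration and the Frobenius cyclic action $\sigma(h_{i,j})=h_{i,j+1}$ are compatible at the level of the Hopf algebra $\mathbb{F}_p[\strictAut(\mathbb{G})]^*$, which is an artifact of the $\mathbb{F}_{p^n}$-structure on the formal group.
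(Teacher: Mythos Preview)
Your overall architecture matches the paper's: spectral sequence~\eqref{may 2 ss} is the May-type spectral sequence of Ravenel's filtration on the cobar complex, and spectral sequence~\eqref{may 1 ss} is May's restricted-to-unrestricted Lie algebra spectral sequence, applied after using Milnor--Moore to identify $\Cotor$ over the primitively generated $E_0\mathbb{F}_p[\strictAut(\mathbb{G})]^*$ with the cohomology of its restricted Lie algebra of primitives.

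There is one genuine error in your account, however. You explain the bound $\lfloor np/(p-1)\rfloor$ on the index $i$ in $\mathcal{K}(n,\lfloor np/(p-1)\rfloor)$ by asserting that beyond this range ``$h_{i,j}$ becomes a $p$th power of lower-index generators and is no longer an independent Lie algebra generator.'' This is not correct: the restricted Lie algebra of primitives in $E_0\mathbb{F}_p[\strictAut(\mathbb{G})]^*$ is infinite-dimensional, with independent generators $h_{i,j}$ for \emph{all} $i\geq 1$, and likewise the raw Lie--May $E_2$-term carries polynomial generators $b_{i,j}$ for all $i\geq 1$. The finite bounds in the displayed $E_2$-term do not come from any decomposability; they come from a nontrivial splitting that Ravenel proves in Theorem~6.3.4 of~\cite{MR860042}. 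The full Lie--May spectral sequence factors as a tensor product of a large piece with trivial $E_\infty$-term (in which the higher $h_{i,j}$ and $b_{i,j}$ cancel against one another) and the small piece whose $E_2$-term is exactly $H^{*,*,*}(\mathcal{K}(n,\lfloor np/(p-1)\rfloor))\otimes P(b_{i,j}:1\leq i\leq \lfloor n/(p-1)\rfloor)$. The Ravenel numbers $d_{n,i}$ are defined precisely so that $d_{n,i}=pd_{n,i-n}$ once $i>np/(p-1)$, which is what makes this cancellation possible, but establishing that the large tensor factor really does collapse is work, not a structural triviality. Your proposal should cite this splitting rather than the incorrect decomposability claim.
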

\begin{proof}
Theorem~6.3.4 of~\cite{MR860042}. The essential points here are:
\begin{itemize}
\item Using the isomorphism $\Cotor_{\mathbb{F}_p[\strictAut(\mathbb{G})]^*}^{*}(\mathbb{F}_p,\mathbb{F}_p) \cong H^*(\strictAut(\mathbb{G}); \mathbb{F}_p)$, 
spectral sequence~\ref{may 2 ss} is simply the spectral sequence of Ravenel's filtration of the Hopf algebra $\mathbb{F}_p[\strictAut(\mathbb{G})]^*$, analogous to J. P. May's famous spectral sequence, from~\cite{MR2614527}, computing the cohomology $\Ext^*_{A}(\mathbb{F}_p,\mathbb{F}_p)$ of the Steenrod algebra $A$ by filtering $A$ by powers of its augmentation ideal. Ravenel's filtration is the increasing filtration on 
\begin{align*} 
 \mathbb{F}_p[\strictAut(\mathbb{G})]^* 
  &\cong \mathbb{F}_p\otimes_{BP_*}BP_*BP\otimes_{BP_*}\mathbb{F}_p  \\
  &\cong \mathbb{F}_p[t_1, t_2, \dots ]/\left( t_i^{p^n} - t_i\mbox{\ \ for\ all\ } i\right)\end{align*}
given by letting 
the element $t_i^{j}$ be in filtration degree $s_p(j)d_{n,i}$, where $s_p(j)$ is the sum of the digits in the base $p$ expansion of $j$, and $d_{n,i}$ is as in Definition~\ref{ravenel numbers}. 
\item Spectral sequence~\ref{may 1 ss} is a special case of the lesser-known spectral sequence from J. P. May's thesis~\cite{MR2614527}, which computes the cohomology of a restricted Lie algebra from the cohomology of its underlying unrestricted Lie algebra. I have been calling this type of spectral sequence a ``Lie-May spectral sequence,'' to distinguish it from the spectral sequence of a filtered Hopf algebra, also introduced in May's thesis (and of which~\ref{may 1 ss} is a special case). 

The point here is that $\mathbb{F}_p[\strictAut(\mathbb{G})]$ is not primitively generated, but $E^0\mathbb{F}_p[\strictAut(\mathbb{G})]$ is; and from~\cite{MR0174052}, the cohomology of a primitively generated Hopf algebra over a finite field is isomorphic to the cohomology of its restricted Lie algebra of primitives.
In Theorem~6.3.4 of~\cite{MR860042} Ravenel shows that the resulting Lie-May spectral sequence splits as a tensor product of a large tensor factor with trivial $E_{\infty}$-term with a tensor factor whose $E_2$-term is the cohomology of a certain finite-dimensional solvable Lie algebra, tensored with 
$P(b_{i,j}: 1\leq i \leq \floor{\frac{n}{p-1}}, 0\leq j< n)$;
the Koszul complex of that finite-dimensional solvable Lie algebra is $\mathcal{K}(n,\floor{\frac{np}{p-1}})$.
\end{itemize}
\end{proof}

We will need a ``relative'' version of Theorem~\ref{ravenels may spectral sequences} in which we compare the cohomology of the automorphism group scheme of a formal group with complex multiplication to the automorphism group scheme of its underlying formal group.
This ``relative'' theorem is Theorem~\ref{relative ravenel-may ss}, but first, here are the necessary definitions:
\begin{definition}\label{def of formal module}
Let $A$ be a commutative ring, and let $R$ be a commutative $A$-algebra. A (one-dimensional) {\em formal group over $R$ with complex multiplication by $A$,} or for short a {\em formal $A$-module over $R$}, is a formal group law $F(X,Y)\in R[[X,Y]]$ 
together with a group homomorphism $\rho: A \rightarrow \Aut(F)$ such that
$\rho(a) \equiv aX \mod X^2$.

If $F,G$ are formal $A$-modules over $R$, a morphism $f: F\rightarrow G$ of formal $A$-modules is a power series $f(X) \in R[[X]]$ such that $f$ is a homomorphism of formal group laws (i.e., $f(F(X,Y)) = G(f(X),f(Y))$), and such that
$f(a)(\rho_F(X)) = \rho_G(f(a)(X))$ for all $a\in A$.
\end{definition}

The classical results on $p$-height and $p$-typicality (as in~\cite{MR0393050}) were generalized to formal $A$-modules, for $A$ a discrete valuation ring (the second and third claims are proven by M. Hazewinkel in~\cite{MR2987372}; the first claim is easier, but a proof can be found in~\cite{cmah2}):
\begin{theorem}
Let $A$ be a discrete valuation ring of characteristic zero, with finite residue field. Then the classifying Hopf algebroid $(L^A,L^AB)$ of formal $A$-modules admits a retract $(V^A,V^AT)$ with the following properties:
\begin{itemize}
\item The inclusion map
$(V^A,V^AT) \hookrightarrow (L^A,L^AB)$
and the retraction map $(L^A,L^AB) \hookrightarrow (V^A,V^AT)$
are maps of graded Hopf algebroids, and are
mutually homotopy-inverse.
\item If $F$ is a formal $A$-module over a commutative $A$-algebra $R$
and the underlying formal group law of $F$ admits a logarithm $\log_F(X)$, 
then the classifying map $L^A \rightarrow R$ factors through the retraction map
$L^A \rightarrow V^A$ if and only if $\log_F(X) = \sum_{n\geq 1} \alpha_n X^{q^n}$
for some $\alpha_1, \alpha_2, \dots \in R\otimes_{\mathbb{Z}} \mathbb{Q}$,
where $q$ is the cardinality of the residue field of $A$.
\item $V^A \cong A[v_1^A, v_2^A, \dots ]$ with $v_n^A$ in grading degree $2(q^n-1)$.
\end{itemize}
\end{theorem}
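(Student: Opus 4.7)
The plan is to generalize Cartier's $p$-typicalization theory from the classical setting of formal groups to the present setting of formal $A$-modules, by constructing a natural $A$-typicalization idempotent on the category of formal $A$-modules and identifying its image with the classifying ring $V^A$. All three bullets of the theorem will fall out of a single construction.

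First I would work rationally. Since $A$ has characteristic zero, every formal $A$-module over an $A \otimes \mathbb{Q}$-algebra has a logarithm, and rationally the universal formal $A$-module is classified by the coefficients of its logarithm: $L^A \otimes \mathbb{Q} \cong (A \otimes \mathbb{Q})[m_1, m_2, \dots]$ with $\log_F(X) = \sum_{n \geq 1} m_n X^n$. Define the $A$-typicalized logarithm by truncation to $q$-power exponents, $\log^{\mathrm{typ}}(X) = \sum_{n \geq 0} m_{q^n} X^{q^n}$, and let $\epsilon \colon L^A \otimes \mathbb{Q} \to L^A \otimes \mathbb{Q}$ be the classifying map for the formal $A$-module with this logarithm. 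One checks that $\epsilon$ is a ring homomorphism of $A$-algebras, is idempotent, respects grading, and, by applying the same construction to the source and target of a strict isomorphism, extends to an idempotent endomorphism of the Hopf algebroid $(L^A, L^A B) \otimes \mathbb{Q}$. Let $(V^A, V^A T) \otimes \mathbb{Q}$ be its image, which rationally is the classifying Hopf algebroid of those formal $A$-modules whose logarithm has the $q$-typical form in the second bullet.

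Next I would establish integrality: the map $\epsilon$ descends to a map $L^A \to L^A$ (and likewise on $L^A B$) before inverting the residue characteristic. This is the main obstacle, and it is the point at which Hazewinkel's functional equation integrality lemma, appropriately formulated for formal $A$-modules (the $A$-module analogue of the treatment in the chapters of \cite{MR2987372} on $p$-typical curves), is indispensable: one observes that $\log^{\mathrm{typ}}$ satisfies a functional equation of the standard Hazewinkel form with respect to a uniformizer of $A$, and then deduces that the coefficients of the resulting power series $F^{\mathrm{typ}}$, as well as of the universal strict isomorphism $F \to F^{\mathrm{typ}}$, lie in $L^A$ and $L^A B$ respectively. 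Granting integrality, define $(V^A, V^A T)$ as the image of $\epsilon$; the inclusion and retraction are mutually homotopy-inverse by construction (the homotopy is the universal $A$-typicalizing strict isomorphism, living in $L^A B$), proving the first bullet, and the second bullet is then the definitional statement that a classifying map factors through $\epsilon$ iff it is fixed by $\epsilon$ iff its logarithm is $q$-typical in the stated sense.

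Finally I would identify $V^A$ as a polynomial ring. Rationally, $V^A \otimes \mathbb{Q} = (A \otimes \mathbb{Q})[m_{q}, m_{q^2}, \dots]$ with $|m_{q^n}| = 2(q^n - 1)$. To produce integral polynomial generators $v_n^A \in V^A$, I would mimic the Araki change of variables used in the classical $p$-typical case ($A = \mathbb{Z}_{(p)}$, where $V^A = BP_*$ and the $v_n^A$ are the Araki generators): an inductive recursion expresses each $m_{q^n}$ in terms of $v_1^A, \dots, v_n^A$ via a Witt-style sum, with the recursion solvable integrally precisely because of the functional equation satisfied by $\log^{\mathrm{typ}}$. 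Comparing degrees, $|v_n^A| = 2(q^n - 1)$ as claimed. This proves the third bullet and completes the proof.
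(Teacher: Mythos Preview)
The paper does not supply its own proof of this theorem: it is stated as background, with the second and third bullets attributed to Hazewinkel~\cite{MR2987372} and the first to~\cite{cmah2}. Your proposal is a faithful outline of Hazewinkel's approach---construct the $q$-typicalization rationally by projecting the logarithm onto $q$-power exponents, invoke the functional-equation integrality lemma to descend to $L^A$, and then produce Araki/Hazewinkel-style polynomial generators $v_n^A$---so it is aligned with the source the paper cites, and the sketch is sound.

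One remark worth tightening: when you say ``by applying the same construction to the source and target of a strict isomorphism, extends to an idempotent endomorphism of the Hopf algebroid,'' you are implicitly using that typicalization is functorial for strict isomorphisms and that the universal typicalizing isomorphism $F \to F^{\mathrm{typ}}$ over $L^A$ is itself integral (i.e., lives in $L^AB$, not just $L^AB \otimes \mathbb{Q}$). Both facts do follow from the functional-equation lemma, but they are separate applications of it (one for the group law, one for the isomorphism), and the first bullet of the theorem---that inclusion and retraction are homotopy-inverse as maps of Hopf algebroids---rests on the second of these. Making that explicit would close the only visible gap in your outline.
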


\begin{definition}\label{def of typicality and height}
Let $A$ be a discrete valuation ring of characteristic zero, with uniformizer $\pi$, and with finite residue field. Let $R$ be a commutative $A$-algebra, and let $\mathbb{G}$ be a formal $A$-module over $R$.
\begin{itemize}
\item
We say that $\mathbb{G}$ is {\em $A$-typical} if the classifying map $L^A \rightarrow R$ factors through the retraction $L^A \rightarrow V^A$. 
\item 
If $\mathbb{G}$ is $A$-typical and $n$ is a nonnegative integer, we say that $\mathbb{G}$ has {\em $A$-height $\geq n$} if the classifying map $V^A \rightarrow R$ factors through the quotient map $V^A \rightarrow V^A/(\pi, v_1^A, \dots ,v_{n-1}^A)$. We say that $\mathbb{G}$ has {\em $A$-height $n$} if $\mathbb{G}$ has $A$-height $\geq n$ but not $A$-height $\geq n+1$. If $\mathbb{G}$ has $A$-height $\geq n$ for all $n$, then we say that $\mathbb{G}$ has $A$-height $\infty$.
\item 
The inclusion $V^A\rightarrow L^A$ associates, to each formal $A$-module, an $A$-typical formal $A$-module $\typ(\mathbb{G})$ which is isomorphic to it. If $\mathbb{G}$ is an arbitrary (not necessarily $A$-typical) formal $A$-module, we say that $\mathbb{G}$ has {\em $A$-height $n$} if $\typ(\mathbb{G})$ has $A$-height $n$.
\end{itemize}
\end{definition}

The following is proven in~\cite{MR2987372}:
\begin{prop}\label{height of underlying fgl}
Let $p$ be a prime number.
\begin{itemize}
\item Every formal group law over a commutative $\hat{\mathbb{Z}}_p$-algebra admits the unique structure of a formal $\hat{\mathbb{Z}}_p$-module. Consequently, there is an equivalence of categories between formal group laws over commutative $\hat{\mathbb{Z}}_p$-algebras, and formal $\hat{\mathbb{Z}}_p$-modules. Under this correspondence, a formal $\hat{\mathbb{Z}}_p$-module is $\hat{\mathbb{Z}}_p$-typical if and only if its underlying formal group law is $p$-typical. If $\mathbb{G}$ is a formal $\hat{\mathbb{Z}}_p$-module of $\hat{\mathbb{Z}}_p$-height $n$, then its underlying formal group law has $p$-height $n$.
\item If $L,K$ are finite extensions of $\mathbb{Q}_p$ with rings of integers $B,A$ respectively, if $L/K$ is a field extension of degree $d$, and if $\mathbb{G}$ is a formal $B$-module of $B$-height $n$, then the underlying formal $A$-module of $\mathbb{G}$ has $A$-height $dn$.
\item In particular, if $K/\mathbb{Q}_p$ is a field extension of degree $d$, if $K$ has ring of integers $A$, and if $\mathbb{G}$ is a formal $A$-module of $A$-height $n$, then the underlying formal group law of $\mathbb{G}$ has $p$-height $dn$.
Consequently, {\em the only formal groups which admit complex multiplication by $A$ have underlying formal groups of $p$-height divisible by $d$.}
\end{itemize}
\end{prop}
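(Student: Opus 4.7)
The plan is to handle the three bullet points in turn, each by unpacking the definitions of $A$-typicality and $A$-height in terms of the $[\pi]$-series and then comparing them across the change of base ring. The common theme is that after reducing modulo the maximal ideal, the $A$-height of a formal $A$-module is detected as the (logarithm in $q = |A/\pi A|$ of the) degree of the leading monomial of $[\pi]_{\mathbb{G}}(X)$, and this degree can be tracked under restriction of scalars.

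For the first bullet, let $R$ be a commutative $\hat{\mathbb{Z}}_p$-algebra and $F(X,Y) \in R[[X,Y]]$ a formal group law. The $[n]$-series for $n \in \mathbb{Z}$ give a ring homomorphism $\mathbb{Z} \to \mathrm{End}(F)$ which is continuous when $\mathrm{End}(F)$ is given the $(X)$-adic topology, so using that $R$ is a $\hat{\mathbb{Z}}_p$-algebra one extends continuously to $\rho \colon \hat{\mathbb{Z}}_p \to \mathrm{End}(F)$, and any such $\rho$ with $\rho(a) \equiv aX \pmod{X^2}$ is forced on the dense subring $\mathbb{Z} \subset \hat{\mathbb{Z}}_p$, giving uniqueness. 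Morphisms of formal group laws over $\hat{\mathbb{Z}}_p$-algebras commute with $[n]$ for $n\in\mathbb{Z}$, hence by continuity with the $\hat{\mathbb{Z}}_p$-action, yielding the equivalence of categories. To identify the two notions of typicality and height one compares the classifying Hopf algebroids: Hazewinkel's construction of $V^{\hat{\mathbb{Z}}_p} \hookrightarrow L^{\hat{\mathbb{Z}}_p}$ reduces, under the equivalence, to Quillen's construction of $BP_* \hookrightarrow MU_*$, and the generators $v_n^{\hat{\mathbb{Z}}_p}$ are sent to Hazewinkel's $v_n$'s (giving agreement of the logarithms and hence the typicality condition); the corresponding ideals $(p, v_1, \ldots, v_{n-1})$ then coincide, giving the equality of heights.

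For the second bullet, let $e$ and $f$ be the ramification index and residue degree of $L/K$, so $ef = d$, let $\pi_K, \pi_L$ be uniformizers of $A, B$, and let $q = |A/\pi_K A|$, $Q = |B/\pi_L B| = q^f$. Passing to the residue field of $B$, the $B$-height $n$ hypothesis means $[\pi_L]_{\mathbb{G}}(X) = vX^{Q^n} + (\text{higher})$ for a unit $v$. Writing $\pi_K = u\pi_L^e$ with $u \in B^{\times}$ and using that the $A$-module structure on $\mathbb{G}$ is the restriction along $A \hookrightarrow B$ of the $B$-module structure, we get
\[ [\pi_K]_{\mathbb{G}} \;=\; [u]_{\mathbb{G}} \circ [\pi_L]_{\mathbb{G}}^{\circ e}. \]
Since $[\pi_L]_{\mathbb{G}}^{\circ e}(X)$ has leading term of degree $Q^{en}$ and $[u]_{\mathbb{G}}$ is an automorphism (unit leading coefficient in degree $1$), the composite has leading term of degree $Q^{en} = q^{fen} = q^{dn}$, so $\mathbb{G}$ has $A$-height $dn$. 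The third bullet is the special case $K = \mathbb{Q}_p$, $A = \hat{\mathbb{Z}}_p$ combined with the first bullet.

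The main obstacle is the cross-identification in the first bullet: verifying that Hazewinkel's retract $(V^{\hat{\mathbb{Z}}_p}, V^{\hat{\mathbb{Z}}_p}T) \hookrightarrow (L^{\hat{\mathbb{Z}}_p}, L^{\hat{\mathbb{Z}}_p}B)$ matches the classical $p$-typical Quillen retract $(BP_*, BP_*BP) \hookrightarrow (MU_{(p)*}, MU_{(p)*}MU_{(p)})$ under the equivalence of categories, so that the Araki/Hazewinkel generators $v_n^{\hat{\mathbb{Z}}_p}$ and $v_n$ correspond and the height filtrations agree. The rest of the argument, once this identification is in hand, is the short and essentially formal computation with the $\pi$-series displayed above.
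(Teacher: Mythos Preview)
The paper does not actually give a proof of this proposition: it simply attributes the result to Hazewinkel's \cite{MR2987372} and moves on. Your sketch, by contrast, supplies the standard argument, and it is essentially correct.

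A couple of small remarks. In the first bullet, the phrase ``continuous when $\mathrm{End}(F)$ is given the $(X)$-adic topology'' is a little imprecise when $p$ is not nilpotent in $R$ (the linear coefficient of $[p^k]_F$ is $p^k$, which need not tend to zero $(X)$-adically). The cleaner formulation is that each coefficient of $[n]_F(X)$ is a polynomial in $n$ with integer coefficients, hence extends to a $p$-adically continuous function of $n$ once $R$ is a $\hat{\mathbb{Z}}_p$-algebra; this is what actually underlies the extension $\hat{\mathbb{Z}}_p \to \mathrm{End}(F)$. In the second bullet you are tacitly using the equivalence between the paper's definition of $A$-height (via the classifying map $V^A \to R$ and the ideal $(\pi, v_1^A,\dots,v_{n-1}^A)$) and the characterization via the leading term of $[\pi_A]_{\mathbb{G}}(X)$ over a $\pi_A$-torsion base. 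That equivalence is standard (and is in Hazewinkel), but it is worth flagging that it is being invoked for both $A$ and $B$, not only in the $A=\hat{\mathbb{Z}}_p$ case you single out as the ``main obstacle.'' With those clarifications, your $\pi$-series computation $[\pi_K]_{\mathbb{G}} = [u]_{\mathbb{G}}\circ [\pi_L]_{\mathbb{G}}^{\circ e}$ and the degree count $Q^{en}=q^{dn}$ are exactly the right way to see the height-multiplication formula.
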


Let $K/\mathbb{Q}_p$ be a finite field extension, let $A$ be the ring of integers of $K$, and let $k$ be the residue field of $A$.
It follows from the formal module generalization of Lazard's classification of formal group laws over $\overline{k}$ (see~\cite{MR0393050}) 
that, for each positive integer $n$, there exists
an $A$-height $n$ formal $A$-module over $\overline{k}$, and any two $A$-height $n$ formal $A$-modules over $\overline{k}$ are isomorphic.
Under the Barsotti-Tate module generalization of the Dieudonn\'{e}-Manin classification of $p$-divisible groups over $\overline{k}$ (see~\cite{MR0157972}; also see~\cite{MR1393439} for a nice treatment of the theory of Barsotti-Tate modules), it seems to be standard to write $\mathbb{G}_{r/s}$ for any indecomposable, slope $r/s$ Barsotti-Tate $A$-module over $\overline{k}$. Every formal $A$-module of $A$-height $n$ over $\overline{k}$, for $n$ positive and finite, has an associated Barsotti-Tate $A$-module, and this Barsotti-Tate $A$-module is indecomposable and isomorphic to $\mathbb{G}_{1/n}$ (if we were allowing formal $A$-modules of dimension greater than one, then we would get associated Barsotti-Tate modules which are not necessarily indecomposable, and with summands having slope of numerator greater than one).

To make computations of stable homotopy groups, we can work with formal modules defined over $\overline{\mathbb{F}}_p$, but typically this leads to the need to run an additional spectral sequence, namely a descent spectral sequence for the resulting $\Gal(\overline{\mathbb{F}}_p/\mathbb{F}_p)$-action on all the homotopy groups we compute. It is more convenient to work with formal modules over $\mathbb{F}_p$ whenever possible. To that end, we want to choose a $\Gal(\overline{k}/k)$-form of $\mathbb{G}_{1/n}$ for each $n$. In more concrete terms: we want to give a name to one particular convenient choice of $A$-height $n$ formal $A$-module over $k$ for each positive integer $n$. Here is our notation for this, in the particular cases we will study most closely in this document:
\begin{convention}
For each prime $p$ and each positive integer $n$, let $\mathbb{G}^{\hat{\mathbb{Z}}_p\left[ \sqrt{p}\right]}_{1/n}$ denote the $\hat{\mathbb{Z}}_p\left[ \sqrt{p}\right]$-typical formal $\hat{\mathbb{Z}}_p\left[ \sqrt{p}\right]$-module over $\mathbb{F}_p$
classified by the map $V^{\hat{\mathbb{Z}}_p\left[ \sqrt{p}\right]} \rightarrow \mathbb{F}_p$ sending $v_n^{\hat{\mathbb{Z}}_p\left[ \sqrt{p}\right]}$ to $1$ and sending $v_i^{\hat{\mathbb{Z}}_p\left[ \sqrt{p}\right]}$ to $0$ for all $i\neq n$.
We let $\mathbb{G}_{1/2n}$ denote the underlying formal group of $\mathbb{G}^{\hat{\mathbb{Z}}_p\left[ \sqrt{p}\right]}_{1/n}$.
\end{convention}
It is clear from Definition~\ref{def of typicality and height} that $\mathbb{G}^{\hat{\mathbb{Z}}_p\left[ \sqrt{p}\right]}_{1/n}$ has $\hat{\mathbb{Z}}_p\left[ \sqrt{p}\right]$-height $n$. In general, for a totally ramified finite extension $L/K$, the underlying formal $\mathcal{O}_K$-module of an $\mathcal{O}_L$-typical formal $\mathcal{O}_L$-module is also $\mathcal{O}_K$-typical (see~\cite{cmah4} for this fact). Consequently $\mathbb{G}_{1/2n}$ is $p$-typical, and furthermore has $p$-height $2n$, by Proposition~\ref{height of underlying fgl}. 
It follows from a computation of the map $BP_*\rightarrow V^{\hat{\mathbb{Z}}_p\left[ \sqrt{p}\right]}$ (e.g. in~\cite{cmah4} or in~\cite{formalmodules4}) that $\mathbb{G}_{1/2n}$ is the $p$-typical formal group law over $\mathbb{F}_p$ classified by the map $BP_*\rightarrow \mathbb{F}_p$ sending $v_n$ to $1$ and sending $v_i$ to $0$ for all $i\neq n$. 

\begin{theorem}\label{relative ravenel-may ss} 
There exist spectral sequences
\begin{align}
\label{relative may 1 ss} E_2^{s,t,u,v} \cong H^{s,t,u}\left(\mathcal{K}(2n,\floor{\frac{2np}{p-1}})/I_{n,m}\right) \otimes_{\mathbb{F}_p} P\left(b_{i,j}: 1\leq i \leq \floor{\frac{2n}{p-1}}, 0\leq j< n\right) &\Rightarrow 
\Cotor_{E_0\mathbb{F}_p[\strictAut(\mathbb{G}^{\hat{\mathbb{Z}}_p\left[ \sqrt{p}\right]}_{1/n})]^*}^{s,t,u}(\mathbb{F}_p,\mathbb{F}_p) \\
\nonumber d_r: E_r^{s,t,u,v} &\rightarrow E_r^{s+1,t,u,v+r-1} \\
\label{relative may 2 ss} \Cotor_{E_0\mathbb{F}_p[\strictAut(\mathbb{G}^{\hat{\mathbb{Z}}_p\left[ \sqrt{p}\right]}_{1/n})]^*}^{s,t,u}(\mathbb{F}_p,\mathbb{F}_p) &\Rightarrow H^s(\strictAut(\mathbb{G}^{\hat{\mathbb{Z}}_p\left[ \sqrt{p}\right]}_{1/n}); \mathbb{F}_p)\\
\nonumber d_r: E_r^{s,t,u} &\rightarrow E_r^{s+1,t-r,u}  \end{align}
and maps of spectral sequences
\[\xymatrix{
 H^{*,*,*}(\mathcal{K}(2n,\floor{\frac{2np}{p-1}})) \otimes_{\mathbb{F}_p} P(b_{i,j}: 1\leq i \leq \floor{\frac{2n}{p-1}}, 0\leq j< 2n) \ar@{=>}[r]\ar[d] & \Cotor_{E_0\mathbb{F}_p[\strictAut(\mathbb{G}_{1/2n})]^*}^{*,*,*}(\mathbb{F}_p,\mathbb{F}_p) \ar[d] \\
 H^{*,*,*}(\mathcal{K}(2n,\floor{\frac{2np}{p-1}})/I_{n,m}
  ) \otimes_{\mathbb{F}_p} P(b_{i,j}: 1\leq i \leq \floor{\frac{2n}{p-1}}, 0\leq j< n) \ar@{=>}[r] & \Cotor_{E_0\mathbb{F}_p[\strictAut(\mathbb{G}^{\hat{\mathbb{Z}}_p\left[ \sqrt{p}\right]}_{1/n})]^*}^{*,*,*}(\mathbb{F}_p,\mathbb{F}_p)  \\
 \Cotor_{E_0\mathbb{F}_p[\strictAut(\mathbb{G}_{1/2n})]^*}^{*,*,*}(\mathbb{F}_p,\mathbb{F}_p) \ar[d] \ar@{=>}[r] \ar[d] &
 H^*(\strictAut(\mathbb{G}_{1/2n}); \mathbb{F}_p) \ar[d] \\
 \Cotor_{E_0\mathbb{F}_p[\strictAut(\mathbb{G}^{\hat{\mathbb{Z}}_p\left[ \sqrt{p}\right]}_{1/n})]^*}^{*,*,*}(\mathbb{F}_p,\mathbb{F}_p) \ar@{=>}[r] &
 H^*(\strictAut(\mathbb{G}^{\hat{\mathbb{Z}}_p\left[ \sqrt{p}\right]}_{1/n}); \mathbb{F}_p) }\]
relating spectral sequences~\ref{relative may 1 ss} and~\ref{relative may 2 ss} to those of Theorem~\ref{ravenels may spectral sequences}.
\end{theorem}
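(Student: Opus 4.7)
The plan is to derive Theorem~\ref{relative ravenel-may ss} by running exactly the same machinery as Theorem~\ref{ravenels may spectral sequences} relative to the closed embedding of group schemes
\[ \iota : \strictAut\bigl(\mathbb{G}^{\hat{\mathbb{Z}}_p[\sqrt{p}]}_{1/n}\bigr) \hookrightarrow \strictAut(\mathbb{G}_{1/2n}), \]
whose existence is immediate from Proposition~\ref{height of underlying fgl}: $\mathbb{G}^{\hat{\mathbb{Z}}_p[\sqrt{p}]}_{1/n}$ has $\mathbb{G}_{1/2n}$ as its underlying formal group, and a strict formal-module automorphism is, in particular, a strict formal-group automorphism that happens to commute with the $\hat{\mathbb{Z}}_p[\sqrt{p}]$-action. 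Dually, $\iota$ yields a surjection of profinite Hopf algebras
\[ \pi :\mathbb{F}_p[\strictAut(\mathbb{G}_{1/2n})]^* \twoheadrightarrow \mathbb{F}_p\bigl[\strictAut(\mathbb{G}^{\hat{\mathbb{Z}}_p[\sqrt{p}]}_{1/n})\bigr]^*. \]

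The key technical step is to compute $\pi$ in Ravenel's coordinates $\mathbb{F}_p[t_1,t_2,\dots]/(t_i^{p^{2n}}-t_i)$ and to check that it takes Ravenel's filtration on the source to a filtration on the target whose associated graded is $E_0\mathbb{F}_p[\strictAut(\mathbb{G}_{1/2n})]^*$ modulo the ideal generated by the differences $t_i^{p^{j}} - t_i^{p^{j+n}}$. This identification should be extracted from the formal-module Hopf algebroid $(V^{\hat{\mathbb{Z}}_p[\sqrt{p}]}, V^{\hat{\mathbb{Z}}_p[\sqrt{p}]}T)$, the retraction $L^A \to V^A$, and its comparison to $(BP_*,BP_*BP)$, as developed in \cite{cmah4} and \cite{formalmodules4}. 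Dualizing at the level of indecomposables, the relation $t_i^{p^j}=t_i^{p^{j+n}}$ on the Hopf algebra side becomes the identification $h_{i,j}=h_{i,j+n}$ on the Koszul/primitive side; since the ideal $I_{n,m}$ of Definition~\ref{def of K dga}, applied in the case of period $2n$, is by construction generated by exactly those differences $h_{i,j}-h_{i,j+n}$, the associated graded Hopf algebra of the induced filtration on the target is precisely the $\pi$-quotient of $E_0\mathbb{F}_p[\strictAut(\mathbb{G}_{1/2n})]^*$ whose cobar complex (at the level of primitives) is $\mathcal{K}(2n,\lfloor 2np/(p-1)\rfloor)/I_{n,m}$.

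Once this identification is in hand, both spectral sequences are formal consequences of May's general machinery. Spectral sequence~\ref{relative may 2 ss} is the spectral sequence of Ravenel's filtration on $\mathbb{F}_p[\strictAut(\mathbb{G}^{\hat{\mathbb{Z}}_p[\sqrt{p}]}_{1/n})]^*$, i.e.\ the $\pi$-image of May's filtered-Hopf-algebra spectral sequence~\ref{may 2 ss}. Spectral sequence~\ref{relative may 1 ss} is obtained by applying the Lie-May spectral sequence to the (now primitively generated) associated graded Hopf algebra and then splitting off the acyclic tensor factor exactly as in the proof of Theorem~\ref{ravenels may spectral sequences}; the surviving factor is the Koszul complex of the restricted Lie algebra of primitives, which by the preceding paragraph is $\mathcal{K}(2n,\lfloor 2np/(p-1)\rfloor)/I_{n,m}$, tensored with a polynomial algebra on transpotence classes $b_{i,j}$. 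The reduction of the range of $j$ from $0\le j<2n$ down to $0\le j<n$ follows from the same involutive identification $h_{i,j}\leftrightarrow h_{i,j+n}$ applied to the transpotence classes, together with the fact that $w_{i,j}$-type generators (in the language of Proposition~\ref{presentation for associated graded}) are killed by passing to the quotient. The four maps of spectral sequences in the comparison diagram are then given for free by functoriality of May's constructions applied to $\pi$ and to $\iota^*$.

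The principal obstacle is the technical identification in the second paragraph: verifying, with Ravenel's specific choice of filtration and coordinates $t_i$, that the extra relations imposed on $\mathbb{F}_p[\strictAut(\mathbb{G}_{1/2n})]^*$ by the $\hat{\mathbb{Z}}_p[\sqrt{p}]$-module structure are precisely the dual relations to those defining $I_{2n,m}$, and that the filtration is compatible in the strong sense needed to read off the associated graded. Once that bookkeeping is done, everything else is either naturality or a direct translation of the proof of Theorem~\ref{ravenels may spectral sequences}.
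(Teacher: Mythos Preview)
Your proposal is correct and follows essentially the same route as the paper's own proof: identify $\mathbb{F}_p[\strictAut(\mathbb{G}^{\hat{\mathbb{Z}}_p[\sqrt{p}]}_{1/n})]^*$ as the quotient of $\mathbb{F}_p[\strictAut(\mathbb{G}_{1/2n})]^*$ by the relations $t_i^{p^n}-t_i$ (equivalently, your $t_i^{p^j}-t_i^{p^{j+n}}$) via the formal-module Hopf algebroid $(V^A,V^AT)$, observe that Ravenel's filtration descends to this quotient, and then invoke May's filtered-Hopf-algebra and Lie-May machinery together with Ravenel's splitting, with the comparison maps coming from naturality. The paper likewise defers the technical verification of the quotient identification and filtration compatibility to the references \cite{formalmodules4}, \cite{MR745362}, and \cite{ravenelfilt}, so your flagging of that step as the principal obstacle is exactly right.
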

\begin{proof}
It follows from the usual Hopf algebroid ``functor of points'' argument that
the automorphism group scheme $\strictAut(\mathbb{G}_{1/2n})$ is corepresented by
$\mathbb{F}_p[\strictAut(\mathbb{G}_{1/2n})]^* \cong \mathbb{F}_p\otimes_{BP_*}BP_*BP\otimes_{BP_*} \mathbb{F}_p$, with the action of $BP_*$ on $\mathbb{F}_p$ given by the ring map
$BP_*\rightarrow \mathbb{F}_p$ classifying $\mathbb{G}_{1/2n}$. 
In section~6.3 of~\cite{MR860042} one finds the computation
\[ \mathbb{F}_p\otimes_{BP_*}BP_*BP\otimes_{BP_*} \mathbb{F}_p \cong \mathbb{F}_p[t_1, t_2, \dots ]/\left(t_i^{p^{2n}} - t_i\mbox{\ \ for\ all\ } i\right) .\]
Now the automorphism group scheme $\strictAut(\mathbb{G}^{\hat{\mathbb{Z}}_p\left[ \sqrt{p}\right]}_{1/n})$ is corepresented by
$\mathbb{F}_p[\strictAut(\mathbb{G}^{\hat{\mathbb{Z}}_p\left[\sqrt{p}\right]}_{1/n})]^* \cong \mathbb{F}_p\otimes_{V^A}V^AT\otimes_{V^A} \mathbb{F}_p$,
and in~\cite{formalmodules4} and in~\cite{MR745362}, 
one finds the computation
$\mathbb{F}_p\otimes_{V^A}V^AT\otimes_{V^A} \mathbb{F}_p \cong \mathbb{F}_p\otimes_{BP_*}BP_*BP\otimes_{BP_*} \mathbb{F}_p/\left(t_i^{p^n} - t_i\mbox{\ \ for\ all\ } i\right)$.
Ravenel's filtration of $\mathbb{F}_p\otimes_{BP_*}BP_*BP\otimes_{BP_*} \mathbb{F}_p$ (see the proof of Theorem~\ref{ravenels may spectral sequences}) induces a filtration on the quotient algebra $\mathbb{F}_p\otimes_{V^A}V^AT\otimes_{V^A} \mathbb{F}_p$,
hence a map of resulting May spectral sequences; one checks that the associated graded algebra $E_0\left( \mathbb{F}_p[\strictAut(\mathbb{G}^{\hat{\mathbb{Z}}_p\left[\sqrt{p}\right]}_{1/n})]^* \right)$ is primitively generated and that Ravenel's splitting of the Lie-May spectral sequence~\ref{may 1 ss} is compatible with the quotient map $\mathbb{F}_p\otimes_{BP_*}BP_*BP\otimes_{BP_*}\mathbb{F}_p \rightarrow \mathbb{F}_p\otimes_{V^A}V^AT\otimes_{V^A} \mathbb{F}_p$, hence we have a map of resulting Lie-May spectral sequences. A more detailed proof of these claims, in greater generality (applying to a much larger class of extensions $K/\mathbb{Q}_p$ than those of the form $\mathbb{Q}_p(\sqrt{p})/\mathbb{Q}_p)$), appears in versions of~\cite{ravenelfilt} which were already privately circulated.
\end{proof}

\begin{definition}\label{def of e dga}
Let $p$ be a prime, 
$\ell$ a nonnegative integer, $m$ a positive integer with $\ell\leq m$, $n$ an even positive integer, 
and let $\mathcal{E}(n,m
,\ell)$ denote the bigraded 
$C_n$-equivariant differential graded subalgebra 
\begin{align*} \mathcal{E}(n,m
,\ell) &= \Lambda\left( h_{i,j}, w_{i^{\prime},j^{\prime}}: 1\leq i\leq m, 1\leq i^{\prime}\leq \ell, 0\leq j < n/2, 0\leq j^{\prime} \leq n/2 \right) \\
 & \subseteq E_0\mathcal{K}(n,m
)\end{align*}
of $E_0\mathcal{K}(n,m
)$.
\end{definition}

\begin{observation}\label{properties of e dga}
The important (and easy) properties of the differential graded algebras $\mathcal{E}(n,m,\alpha,\ell)$ are the following:
\begin{itemize}
\item $\mathcal{E}(n,m
,0) = \mathcal{K}(n,m)/I_{n,m}$ (as trigraded equivariant DGAs),
\item $\mathcal{E}(n,m
,m) = E_0\mathcal{K}(n,m
)$, (as trigraded equivariant DGAs),
\item $\mathcal{E}(2n,m,0) \cong \mathcal{K}(2n,m)/I_{2n,m} \cong \mathcal{K}(n,m)$
as {\em bigraded} equivariant DGAs (the right-hand isomorphism does not respect the Ravenel grading),
\item we have a short exact sequence of (equivariant, trigraded) differential graded algebras
\[ 1 \rightarrow \mathcal{E}(n,m
,\ell-1) \rightarrow \mathcal{E}(n,m
,\ell) \rightarrow \Lambda\left( w_{\ell,0},w_{\ell,1},\dots ,w_{\ell,n/2-1}\right) \rightarrow 1.\]
\end{itemize}
\end{observation}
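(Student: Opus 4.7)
The plan is to verify each of the four bullets essentially by inspecting the presentations of the DGAs involved, using Proposition~\ref{presentation for associated graded} as the main input. None of this should require any genuinely new computation; the work is mostly bookkeeping about generators, differentials, and four gradings.

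For the first two bullets, I would first specialize the presentation in Proposition~\ref{presentation for associated graded} to the values $\ell=0$ and $\ell=m$. Taking $\ell=0$ in Definition~\ref{def of e dga} kills all of the $w_{i',j'}$ generators, leaving the exterior algebra on $\{h_{i,j} : 1\leq i\leq m,\ 0\leq j<n/2\}$ with differential $d(h_{i,j})=\sum_k h_{k,j}h_{i-k,j+k}$. On the other hand, $\mathcal{K}(n,m)/I_{n,m}$ is by construction obtained from $\mathcal{K}(n,m)=\Lambda(h_{i,j}:1\leq i\leq m,\ 0\leq j<n)$ by imposing $h_{i,j}=h_{i,j+n/2}$, so it has exactly the same presentation (after reindexing the $j$'s to the range $[0,n/2)$). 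The compatibility of the differentials, the $C_n$-action, and the internal/cohomological/Ravenel gradings is then immediate. Taking $\ell=m$ recovers the full exterior algebra of Proposition~\ref{presentation for associated graded}, so $\mathcal{E}(n,m,m)=E_0\mathcal{K}(n,m)$.

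For the third bullet, the first isomorphism is the first bullet applied with $n$ replaced by $2n$. For $\mathcal{K}(2n,m)/I_{2n,m}\cong\mathcal{K}(n,m)$, I would define the map on generators by $h_{i,j}\mapsto h_{i,j\bmod n}$, using the convention $h_{i,j+n}=h_{i,j}$ on the target. Since the generators of $I_{2n,m}$ are exactly $h_{i,j}-h_{i,j+n}$, the map is a well-defined isomorphism of exterior algebras. Its compatibility with the differential is the assertion that the sum $\sum_k h_{k,j}h_{i-k,j+k}$ is the same whether we work modulo $n$ or modulo $2n$, which is clear since the image only depends on the residue class of $j+k$. Equivariance under $C_n$ (regarded as the subgroup of $C_{2n}$ acting on the quotient) and preservation of the cohomological and internal gradings are similarly immediate. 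The Ravenel grading is not preserved, precisely because $\|h_{i,j}\|=d_{2n,i}$ on the left and $d_{n,i}$ on the right, and these generally differ.

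For the fourth bullet, the inclusion $\mathcal{E}(n,m,\ell-1)\hookrightarrow\mathcal{E}(n,m,\ell)$ is the obvious one from Definition~\ref{def of e dga}. The projection onto $\Lambda(w_{\ell,0},\ldots,w_{\ell,n/2-1})$ is defined on generators by sending every $h_{i,j}$ and every $w_{i',j'}$ with $i'<\ell$ to zero, and fixing $w_{\ell,j'}$. Since $\mathcal{E}(n,m,\ell)\cong\mathcal{E}(n,m,\ell-1)\otimes_{\mathbb{F}_p}\Lambda(w_{\ell,0},\ldots,w_{\ell,n/2-1})$ as bigraded equivariant algebras, this projection is well-defined, and its kernel is the ideal generated by the augmentation ideal of $\mathcal{E}(n,m,\ell-1)$, yielding exactness in the sense of filtered DGAs. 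Compatibility with the differential on the quotient (which must be zero) is the observation, from Proposition~\ref{presentation for associated graded}, that every term in $d(w_{\ell,j})$ involves some $h_{k,j}$, $h_{\ell-k,j+k}$, $w_{k,j}$, or $w_{\ell-k,j+k}$ with $1\leq k\leq\ell-1$, all of which lie in the kernel. If there is any obstacle at all in the whole observation, it is just keeping straight the mild abuse in calling this a ``short exact sequence of DGAs'' rather than a split short exact sequence of graded algebras equipped with compatible differentials; but the content is entirely formal.
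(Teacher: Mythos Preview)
Your verification is correct. The paper does not supply any proof for this observation; it simply labels the four properties ``important (and easy)'' and moves on. Your argument is exactly the routine bookkeeping the paper is implicitly invoking: reading off the $\ell=0$ and $\ell=m$ cases of Definition~\ref{def of e dga} against Proposition~\ref{presentation for associated graded}, checking that killing $I_{2n,m}$ collapses the index range on $j$ from $[0,2n)$ to $[0,n)$ (while replacing $d_{2n,i}$ by $d_{n,i}$ breaks the Ravenel grading), and observing that every term of $d(w_{\ell,j})$ lies in the augmentation ideal of $\mathcal{E}(n,m,\ell-1)$. One minor phrasing point: in the third bullet, the $C_{2n}$-action on $\mathcal{K}(2n,m)/I_{2n,m}$ factors through the \emph{quotient} $C_{2n}\twoheadrightarrow C_n$ (since $\sigma^n$ fixes each $h_{i,j}$ in the quotient), rather than restricting to a subgroup; but this does not affect the argument.
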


Putting these ideas together, we have a strategy for computing the cohomology $H^*(\strictAut(\mathbb{G}_{1/2n}); \mathbb{F}_p)$ of the height $2n$ strict Morava stabilizer group scheme by first computing the cohomology 
$H^*(\strictAut(\mathbb{G}_{1/n}); \mathbb{F}_p)$ of the height $n$ strict Morava stabilizer group scheme:
\begin{strategy}\label{main strategy}
\begin{enumerate}
\item Compute $H^{*,*,*}(\mathcal{K}(n,\floor{\frac{np}{p-1}}))$.
(This cohomology is the input for Lie-May spectral sequence~\ref{may 1 ss}, which collapses with no differentials when $p>n+1$. The output of Lie-May spectral sequence~\ref{may 1 ss} is the input for May spectral sequence~\ref{may 2 ss}, which often collapses with no differentials for dimensional reasons. In particular, as per the remarks following Theorem~6.3.5 in~\cite{MR860042},
 when $n<p-1$ the smallest values of $n$ and $p$ for which any differentials are known to be nonzero in spectral sequence~\ref{may 2 ss} is $n=9$ and $p=11$. Consequently, when $n<p-1$, $H^{*,*,*}(\mathcal{K}(n,\floor{\frac{np}{p-1}}))$ is ``usually'' isomorphic to the cohomology $H^*(\strictAut(\mathbb{G}_{1/n}); \mathbb{F}_p)$ of the height $n$ strict Morava stabilizer group scheme.)
\item Using $H^{*,*,*}(\mathcal{K}(n,\floor{\frac{np}{p-1}}))$ as input, run the Cartan-Eilenberg spectral sequences for the extensions of trigraded equivariant DGAs
\[ 1 \rightarrow \mathcal{K}(n, m-1) \rightarrow \mathcal{K}(n, m) \rightarrow \Lambda(h_{m,0},h_{m,1}, \dots, h_{m,n-1}) \rightarrow 1\]
for $m = \floor{\frac{np}{p-1}}+1, \floor{\frac{np}{p-1}}+2, \dots , \floor{\frac{2np}{p-1}}$ to compute $H^{*,*,*}(\mathcal{K}(n,\floor{\frac{2np}{p-1}}))$.
(This cohomology is the input for Lie-May spectral sequence~\ref{relative may 1 ss}, which again collapses with no differentials when $p>n+1$. The output of Lie-May spectral sequence~\ref{relative may 1 ss} is the input for May spectral sequence~\ref{relative may 2 ss}, which again often collapses with no differentials for dimensional reasons, particularly when $n<p-1$. Consequently, when $n<p-1$, $H^{*,*,*}(\mathcal{K}(n,\floor{\frac{2np}{p-1}}))$ is ``usually'' isomorphic to the cohomology $H^*(\strictAut(\mathbb{G}^{\hat{\mathbb{Z}}_p\left[\sqrt{p}\right]}_{1/n}); \mathbb{F}_p)$ of the strict automorphism group scheme of a $\hat{\mathbb{Z}}_p\left[\sqrt{p}\right]$-height $n$ formal $\hat{\mathbb{Z}}_p\left[\sqrt{p}\right]$-module.)
\item Using $H^{*,*,*}(\mathcal{K}(n,\floor{\frac{2np}{p-1}})) \cong H^{*,*,*}(\mathcal{E}(2n,2\floor{\frac{np}{p-1}},0))$ as input (and adjusting the Ravenel grading to put every $h_{i,j}$ in Ravenel degree $d_{2n,i}$ instead of $d_{n,i}$; see Observation~\ref{properties of e dga}),
run the Cartan-Eilenberg spectral sequences for the extensions of trigraded equivariant DGAs
\[ 1 \rightarrow \mathcal{E}(2n,2\floor{\frac{np}{p-1}}
,\ell-1) \rightarrow \mathcal{E}(2n,2\floor{\frac{np}{p-1}}
,\ell) \rightarrow \Lambda\left( w_{\ell,0},w_{\ell,1},\dots ,w_{\ell,n/2-1}\right) \rightarrow 1.\]
for $\ell = 1, 2, \dots , 2\floor{\frac{np}{p-1}}$.
\item Using $H^{*,*,*,*}(\mathcal{E}(2n,2\floor{\frac{np}{p-1}},2\floor{\frac{np}{p-1}}) \cong E_0\mathcal{K}(2n,2\floor{\frac{np}{p-1}})$, run the spectral sequence
\begin{equation}\label{height-doubling ss} H^{*,*,*,*}(E_0\mathcal{K}(2n,2\floor{\frac{np}{p-1}})) \Rightarrow H^{*,*,*}(\mathcal{K}(2n,2\floor{\frac{np}{p-1}}))\end{equation}
 for the filtration of 
$\mathcal{K}(2n,2\floor{\frac{np}{p-1}})$ by powers of $I$.
\item Using $H^{*,*,*}(\mathcal{K}(2n,2\floor{\frac{np}{p-1}}))$, run
spectral sequence~\ref{may 1 ss} to compute $\Cotor_{E_0\mathbb{F}_p[\strictAut(\mathbb{G})]^*}^{*,*,*}(\mathbb{F}_p,\mathbb{F}_p)$.
\item Finally, using $\Cotor_{E_0\mathbb{F}_p[\strictAut(\mathbb{G})]^*}^{*,*,*}(\mathbb{F}_p,\mathbb{F}_p)$, run spectral sequence~\ref{may 2 ss} to compute
$H^*(\strictAut(\mathbb{G}_{1/2n}); \mathbb{F}_p)$.
\end{enumerate}
\end{strategy}
This strategy is actually usable to make explicit computations, and in the rest of this document I describe the case $n=2$ and $p>5$, i.e., I show how to 
use the cohomology of the height $2$ strict Morava stabilizer group scheme to compute the cohomology of the height $4$ strict Morava stabilizer group scheme, at primes $p>5$.

\section{Running the spectral sequences in the height $4$ case, $p>5$.}

\subsection{The cohomology of $\mathcal{K}(2,2)$ and the height $2$ Morava stabilizer group scheme $\mathbb{G}_{1/2}$.}

The material in this subsection is easy and well-known, appearing already in section~6.3 of~\cite{MR860042}. 
\begin{prop}\label{coh of K 2 2}
Suppose $p>2$.
Then we have an isomorphism of trigraded $C_2$-equivariant $\mathbb{F}_p$-algebras
\[ H^{*,*,*}(\mathcal{K}(2,2)) \cong \mathbb{F}_p\{ 1,h_{10},h_{11},h_{10}\eta_2,h_{11}\eta_2,h_{10}h_{11}\eta_2 \} \otimes_{\mathbb{F}_p} \Lambda(\zeta_2), \]
with tridegrees and the $C_2$-action as follows (remember that the internal degree is always reduced modulo $2(p^2-1)$):
\begin{equation}\label{degree chart 1}
\begin{array}{llllll}
\mbox{Coh.\ class}          & \mbox{Coh.\ degree} & \mbox{Int.\ degree} & \mbox{Rav.\ degree} & \mbox{Image\ under\ } \sigma \\
1                           & 0                   & 0                   & 0                   & 1  \\
h_{10}                      & 1                   & 2(p-1)              & 1                   & h_{11} \\
h_{11}                      & 1                   & 2p(p-1)             & 1                   & h_{10} \\
\zeta_2                    & 1                   & 0                   & 2                   & \zeta_2 \\
h_{10}\eta_2                & 2                   & 2(p-1)              & 3                   & -h_{11}\eta_2 \\
h_{11}\eta_2                & 2                   & 2p(p-1)             & 3                   & -h_{10}\eta_2 \\
h_{10}\zeta_2                & 2                   & 2(p-1)              & 3                   & h_{11}\zeta_2 \\
h_{11}\zeta_2                & 2                   & 2p(p-1)             & 3                   & h_{10}\zeta_2 \\
h_{10}h_{11}\eta_2          & 3                   & 0                   & 4                   & h_{10}h_{11}\eta_2 \\
h_{10}\eta_2\zeta_2         & 3                   & 2(p-1)              & 5                   & -h_{11}\eta_2\zeta_2 \\
h_{11}\eta_2\zeta_2         & 3                   & 2p(p-1)             & 5                   & -h_{10}\eta_2\zeta_2 \\
h_{10}h_{11}\eta_2\zeta_2    & 4                   & 0                   & 6                   & h_{10}h_{11}\eta_2\zeta_2 .
    \end{array} \end{equation}
where the cup products in $\mathbb{F}_p\{ 1,h_{10},h_{11},h_{10}\eta_2,h_{11}\eta_2,h_{10}h_{11}\eta_2 \}$ are all zero aside from the Poincar\'{e} duality cup products, i.e.,
each class has the obvious dual class such that the cup product of the two is
$h_{10}h_{11}\eta_2$, and the remaining cup products are all zero.
\end{prop}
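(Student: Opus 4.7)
The plan is to carry out the computation by hand, since $\mathcal{K}(2,2)$ is a $16$-dimensional exterior $\mathbb{F}_p$-algebra. Unwinding Definition~\ref{def of K dga}, $\mathcal{K}(2,2) = \Lambda(h_{10}, h_{11}, h_{20}, h_{21})$ as an $\mathbb{F}_p$-algebra, with $d(h_{1j}) = 0$ and, using the convention $h_{1,2} = h_{1,0}$, $d(h_{20}) = h_{10}h_{11}$ and $d(h_{21}) = h_{11}h_{10} = -h_{10}h_{11}$.

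The key simplification is a change of basis: set $\zeta_2 := h_{20} + h_{21}$ and $\eta_2 := h_{20} - h_{21}$. This is invertible because $p > 2$. Under this change of basis one obtains an isomorphism of trigraded $C_2$-equivariant DGAs
\[\mathcal{K}(2,2) \;\cong\; \Lambda(\zeta_2) \otimes_{\mathbb{F}_p} C, \qquad C := \Lambda(h_{10}, h_{11}, \eta_2),\]
where $\zeta_2$ is a closed exterior generator and $C$ carries the differential $d(h_{10}) = d(h_{11}) = 0$, $d(\eta_2) = 2h_{10}h_{11}$. By Künneth, $H^{*,*,*}(\mathcal{K}(2,2)) \cong H^{*,*,*}(C) \otimes \Lambda(\zeta_2)$, reducing the problem to computing $H^*$ of the eight-dimensional DGA $C$.

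Next, I compute $H^*(C)$ by direct inspection in each cohomological degree. The only nontrivial image of $d$ in $C$ is $d(\eta_2) = 2 h_{10}h_{11}$ in $C^2$; every other differential vanishes because $h_{10}^2 = h_{11}^2 = 0$. This yields $H^0(C) = \mathbb{F}_p$, $H^1(C) = \mathbb{F}_p\{h_{10}, h_{11}\}$, $H^2(C) = \mathbb{F}_p\{h_{10}\eta_2, h_{11}\eta_2\}$ (with $h_{10}h_{11}$ killed as a coboundary, using $p \neq 2$), and $H^3(C) = \mathbb{F}_p\{h_{10}h_{11}\eta_2\}$. Tensoring with $\Lambda(\zeta_2)$ produces the twelve-dimensional basis listed in the proposition, with Poincar\'e series $(1+s)^2(1+s+s^2)$.

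The remaining data — multiplicative structure, $C_2$-action, and tridegrees — follows cleanly from the cocycle representatives. Within $H^*(C)$ one verifies by direct multiplication that all products of named classes vanish except the duality products $h_{10} \cdot h_{11}\eta_2 = h_{10}h_{11}\eta_2$ and $h_{11} \cdot h_{10}\eta_2 = -h_{10}h_{11}\eta_2$ (together with the evident products with $1$), because $\eta_2^2 = 0$ in the exterior algebra; this is precisely the Poincar\'e duality claim. The $C_2$-action follows immediately from $\sigma(h_{ij}) = h_{i,j+1}$: we have $\sigma(\zeta_2) = \zeta_2$ and $\sigma(\eta_2) = -\eta_2$, from which every entry of the table follows by multiplicativity. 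The tridegrees are read off from Definition~\ref{def of K dga}, using $d_{2,1} = 1$, $d_{2,2} = 2$, and $|||h_{2j}||| = 2(p^2-1)p^j \equiv 0 \pmod{2(p^2-1)}$. There is no serious obstacle; the computation is entirely bookkeeping. The only subtlety worth flagging is that $\eta_2$ is not itself a cohomology class (only its products with $h_{10}$, $h_{11}$, and $h_{10}h_{11}$ are cocycles), so one must interpret $\eta_2$ as a convenient formal symbol labeling the non-closed chain $h_{20} - h_{21}$ whose two-fold products with the $h_{1j}$ become genuine cohomology classes.
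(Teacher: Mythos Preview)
Your proof is correct and amounts to the same computation as the paper's, just organized differently. The paper runs the Cartan-Eilenberg spectral sequence for the extension
\[ 1 \rightarrow \mathcal{K}(2,1) \rightarrow \mathcal{K}(2,2) \rightarrow \Lambda(h_{20},h_{21}) \rightarrow 1, \]
makes the identical change of basis $\zeta_2 = h_{20}+h_{21}$, $\eta_2 = h_{20}-h_{21}$, and reads off the same differentials $d\zeta_2 = 0$, $d\eta_2 = \pm 2 h_{10}h_{11}$ (the sign discrepancy with your $+2$ is harmless). You bypass the spectral sequence by observing directly that $\mathcal{K}(2,2) \cong \Lambda(\zeta_2)\otimes C$ as DGAs and applying K\"unneth; this is cleaner for such a small complex, while the paper's spectral-sequence packaging is chosen because it is the template that will be iterated for the larger DGAs $\mathcal{E}(4,m,\ell)$ later in the argument.
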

\begin{proof}
We compute the Cartan-Eilenberg spectral sequence for the extension of $C_2$-equivariant trigraded DGAs
\[ 1 \rightarrow \mathcal{K}(2,1) \rightarrow
  \mathcal{K}(2,2)\rightarrow
  \Lambda(h_{20},h_{21}) \rightarrow 1.\]
Since the differential on $\mathcal{K}(2,1)$ is zero (see Definition~\ref{def of K dga}), $H^{*,*,*}(\mathcal{K}(2,1)) \cong \mathcal{K}(2,1) \cong \Lambda(h_{10},h_{11})$.
A change of $\mathbb{F}_p$-linear basis is convenient here: we will write $\zeta_2$ for the element $h_{20} + h_{21}\in \Lambda(h_{20},h_{21})$. (This notation for this particular element is standard. As far as I know, it began with M. Hopkins' work on the ``chromatic splitting conjecture,'' in which $\zeta_2$ plays a special role.)
We will write $\eta_2$ for the element $h_{20} - h_{21}$.

We have the differentials
\begin{eqnarray*} d\zeta_2 & = & 0, \\
 d\eta_2 & = & -2h_{10}h_{11}\end{eqnarray*} 

A convenient way to draw this spectral sequence is as follows:
\[\xymatrix{ 1 & \eta_2\ar[dddl] & \zeta_2 & \eta_2\zeta_2\ar[dddl] \\
 h_{10} & & & \\
 h_{11} & & & \\
 h_{10}h_{11} & & & ,}\]
when $p>2$, where arrows represent nonzero differentials. 
\end{proof}
The table~\ref{degree chart 1} has one row for each element in an $\mathbb{F}_p$-linear basis for the cohomology ring $H^{*,*,*}(\mathcal{K}(2,2))$, but from now on in this document, for the sake of brevity, when writing out similar tables for grading degrees of elements in the cohomology of a multigraded equivariant DGA, 
I will just give one row for each element in a set of generators for the cohomology ring of the DGA.

\begin{prop}\label{cohomology of ht 2 morava stab grp}
Suppose $p>3$. Then the cohomology $H^*(\strictAut(\mathbb{G}_{1/2}); \mathbb{F}_p)$ of the height $2$ strict Morava stabilizer group scheme is isomorphic, as a graded $\mathbb{F}_p$-vector space, to
\[ H^{*,*,*}(\mathcal{K}(2,2)) \cong \mathbb{F}_p\{ 1,h_{10},h_{11},h_{10}\eta_2,h_{11}\eta_2,h_{10}h_{11}\eta_2 \} \otimes_{\mathbb{F}_p} \Lambda(\zeta_2) \]
from Proposition~\ref{coh of K 2 2}.
The cohomological grading on $H^*(\strictAut(\mathbb{G}_{1/2}); \mathbb{F}_p)$ corresponds to the cohomological grading on $H^{*,*,*}(\mathcal{K}(2,2))$, so that $h_{10},h_{11},\zeta_2 \in H^1(\strictAut(\mathbb{G}_{1/2}); \mathbb{F}_p)$,
$h_{10}\eta_2,h_{11}\eta_2\in H^2(\strictAut(\mathbb{G}_{1/2}); \mathbb{F}_p)$, and so on.

The multiplication on $H^*(\strictAut(\mathbb{G}_{1/2}); \mathbb{F}_p)$ furthermore agrees with the multiplication on $H^{*,*,*}(\mathcal{K}(2,2))$, modulo the question of exotic multiplicative extensions, i.e., jumps in Ravenel filtration in the products of elements in $H^*(\strictAut(\mathbb{G}_{1/2}); \mathbb{F}_p)$. 
\end{prop}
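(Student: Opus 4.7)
The plan is to apply Theorem~\ref{ravenels may spectral sequences} with $n=2$ and to show that both spectral sequences~\ref{may 1 ss} and~\ref{may 2 ss} collapse at their first page for purely numerical reasons when $p>3$. The only subtlety that survives the collapse argument is the issue of exotic multiplicative extensions between filtration jumps in Ravenel's filtration, which is precisely the caveat stated in the proposition.

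First I observe the degree arithmetic: for $n=2$ and $p\geq 5$ we have $\lfloor np/(p-1)\rfloor = \lfloor 2p/(p-1)\rfloor = 2$ and $\lfloor n/(p-1)\rfloor = \lfloor 2/(p-1)\rfloor = 0$. Consequently the polynomial factor $P(b_{i,j})$ in the $E_2$-page of~\ref{may 1 ss} is trivial, and the $E_2$-page is simply $H^{*,*,*}(\mathcal{K}(2,2))$ as computed in Proposition~\ref{coh of K 2 2}. The differentials $d_r$ in~\ref{may 1 ss} raise the $b$-polynomial degree $v$ by $r-1\geq 1$, but every class sits in $v=0$ and no target class exists in positive $v$, so the spectral sequence collapses at $E_2$. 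Therefore $\Cotor^{*,*,*}_{E_0\mathbb{F}_p[\strictAut(\mathbb{G}_{1/2})]^*}(\mathbb{F}_p,\mathbb{F}_p) \cong H^{*,*,*}(\mathcal{K}(2,2))$ as trigraded algebras.

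Next I run~\ref{may 2 ss}, whose $E_1$-page is $H^{*,*,*}(\mathcal{K}(2,2))$ and whose differential $d_r$ has the form $d_r\colon E_r^{s,t,u}\to E_r^{s+1,t-r,u}$, i.e., it raises cohomological degree by $1$ and drops Ravenel degree by $r\geq 1$. Reading off the Ravenel degrees from the table in Proposition~\ref{coh of K 2 2}: cohomological degree $s=0$ is in Ravenel degree $0$; $s=1$ lies in Ravenel degrees $\{1,1,2\}$; $s=2$ lies in Ravenel degrees $\{3,3,3,3\}$; $s=3$ lies in Ravenel degrees $\{4,5,5\}$; and $s=4$ lies in Ravenel degree $6$. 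In each transition the minimum Ravenel degree in cohomological degree $s+1$ strictly exceeds the maximum Ravenel degree in cohomological degree $s$, so no $d_r$ with $r\geq 1$ can carry a nonzero class to a nonzero target. The spectral sequence therefore collapses at $E_1$, and the claimed graded $\mathbb{F}_p$-vector space isomorphism follows.

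Finally, since~\ref{may 2 ss} is the spectral sequence of a multiplicative filtration (Ravenel's filtration on $\mathbb{F}_p[\strictAut(\mathbb{G}_{1/2})]^*$), the $E_\infty$-multiplication is the associated graded of the true cup product on $H^*(\strictAut(\mathbb{G}_{1/2});\mathbb{F}_p)$ with respect to Ravenel filtration. Hence products in $H^{*,*,*}(\mathcal{K}(2,2))$ agree with cup products in $H^*(\strictAut(\mathbb{G}_{1/2});\mathbb{F}_p)$ modulo terms of strictly higher Ravenel filtration, which is exactly the ``modulo exotic multiplicative extensions'' proviso. There is no serious obstacle in the proof; the only place one must be careful is the Ravenel-degree bookkeeping that rules out each potential $d_r$, but the gap between consecutive cohomological degrees is clean enough that no case analysis beyond reading off the table is required.
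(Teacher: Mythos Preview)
Your proof is correct and, for spectral sequence~\ref{may 2 ss}, cleaner than the paper's. Both you and the paper dispose of spectral sequence~\ref{may 1 ss} in the same way (the polynomial tensor factor is trivial since $\lfloor 2/(p-1)\rfloor = 0$), but you diverge on~\ref{may 2 ss}: the paper proceeds class by class, lifting explicit cobar cocycles for $h_{10}$ and $h_{10}\eta_2$ from $C^\bullet(E_0\mathbb{F}_p[\strictAut(\mathbb{G}_{1/2})]^*)$ to $C^\bullet(\mathbb{F}_p[\strictAut(\mathbb{G}_{1/2})]^*)$ (using $C_2$-equivariance to get $h_{11}$ and $h_{11}\eta_2$), and separately rules out a differential on $\zeta_2$ by an internal-degree check. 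Your argument instead observes a single numerical fact: the Ravenel degrees in the table of Proposition~\ref{coh of K 2 2} are strictly increasing with cohomological degree (the maximum in degree $s$ is always one less than the minimum in degree $s+1$), so a differential that raises $s$ and lowers $t$ can never land on a nonzero class. This buys you a uniform one-line collapse argument with no cocycle computations at all. The paper's approach, on the other hand, produces explicit cobar representatives as a byproduct, which is what one would want for later computations (and indeed the paper reuses this style of argument in Proposition~\ref{coh of ht 2 fm}); your approach is self-contained but does not hand you those representatives.
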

\begin{proof}
Spectral sequence~\ref{may 1 ss} collapses immediately, since $p>3$ implies that
$1> \floor{\frac{2}{p-1}}$.
Hence $\Cotor_{E_0\mathbb{F}_p[\strictAut(\mathbb{G}_{1/2})]^*}^{*,*,*}(\mathbb{F}_p,\mathbb{F}_p) \cong H^{*,*,*}(\mathcal{K}(4,2))$.

We now run spectral sequence~\ref{may 2 ss}. This is, like all May spectral sequences, the spectral sequence of the filtration (in this case, Ravenel's filtration, described in the proof of Theorem~\ref{ravenels may spectral sequences}) on the cobar complex $C^{\bullet}(A)$ of a coalgebra $A$ induced by a filtration on the coalgebra itself. To compute differentials, we take an element $x\in H^*(C^{\bullet}(E_0A))$, lift it to a cochain $\overline{x}\in H^*(C^{\bullet}(A))$ whose image in the cohomology of the associated graded $H^*(E_0(C^{\bullet}(A))) \cong H^*(C^{\bullet}(E_0A))$ is $x$, and then evaluate the differential $d(\overline{x})$ in the cobar complex $C^{\bullet}(A)$. If $d(\overline{x}) = 0$, then $\overline{x}$ is a cocycle in the cobar complex $C^{\bullet}(A)$ and not merely in its associated graded $E_0C^{\bullet}(A)$, hence $\overline{x}$ represents a cohomology class in $H^*(C^{\bullet}(A))$; if $d(\overline{x}) \neq 0$, then we add correcting coboundaries of lower or higher (depending on whether the filtration is increasing or decreasing) filtration until we arrive at a cocycle which we recognize as a cohomology class in the spectral sequence's $E_1$-page.

It will be convenient to use the presentation 
\[ \mathbb{F}_p[t_{i,j}: i\geq 1, 0\leq j\leq 1]/\left( t_{i,j}^p \mbox{\ \ for\ all\ } i,j\right) \]
for 
$E_0\left(\mathbb{F}_p[\strictAut(\mathbb{G}_{1/2})]^*\right)  \cong E_0\left( \mathbb{F}_p[t_1, t_2, \dots ]/\left( t_i^{p^2} - t_i\mbox{\ \ for\ all\ } i\right)\right)$, where $t_{i,j}$ is the image in the associated graded of $t_i^{p^j}$.
The coproduct on $\mathbb{F}_p[t_{i,j}: i\geq 1, 0\leq j\leq 1]/\left( t_{i,j}^p \mbox{\ \ for\ all\ } i,j\right)$, inherited from that of $\mathbb{F}_p[\strictAut(\mathbb{G}_{1/2})]^*$, is given by
\[ \Delta(t_{i,j}) = \sum_{k=0}^i t_{k,j} \otimes t_{i-k, k+j}\]
for all $i < \floor{\frac{2p}{p-1}}$; see Theorem~6.3.2 of~\cite{MR860042} for this formula.
\begin{description}
\item[$h_{10},h_{11}$] The class $h_{10}$ is represented by $t_{1,0}$ in the cobar complex $C^{\bullet}\left(E_0\left(\mathbb{F}_p[\strictAut(\mathbb{G}_{1/2})]^*\right)\right)$, which lifts to $t_1$ in the cobar complex 
$C^{\bullet}\left(\mathbb{F}_p[\strictAut(\mathbb{G}_{1/2})]^*\right)$.
Since $t_1$ is a coalgebra primitive, i.e., a cobar complex $1$-cocycle, 
all May differentials are zero on $h_{1,0}$.
The $C_2$-equivariance of the spectral sequence then tells us that all May 
differentials also vanish on $h_{1,0}$.
\item[$\zeta_2$] There is no nonzero class in cohomological degree $2$ and internal degree $0$ for $\zeta_2$ to hit by a May differential of any length.
\item[$h_{10}\eta_2, h_{11}\eta_2$] The cohomology class $h_{10}\eta_2$ in the Koszul complex of the Lie algebra of primitives in $E_0\left(\mathbb{F}_p[\strictAut(\mathbb{G}_{1/2})]^*\right)$ (of which $\mathcal{K}(4,2)$ is a subcomplex) is represented by the $2$-cocycle $t_{1,0}\otimes t_{2,0} - t_{1,0}\otimes t_{2,1} - t_{1,0}\otimes t_{1,0}t_{1,1}$ in the cobar complex of $E_0\left(\mathbb{F}_p[\strictAut(\mathbb{G}_{1/2})]^*\right)$. This $2$-cocycle lifts to the $2$-cocycle
$t_{1}\otimes t_{2} - t_{1}\otimes t_{2}^p - t_{1}\otimes t_{1}^{p+1}$ in the cobar complex of $\mathbb{F}_p[\strictAut(\mathbb{G}_{1/2})]^*$.
Hence all May differentials vanish on $h_{10}\eta_2$, and by $C_2$-equivariance, also $h_{11}\eta_2$.
\end{description}
So the May differentials of all lengths vanish on the generators of the ring
$\Cotor_{E_0\mathbb{F}_p[\strictAut(\mathbb{G}_{1/2})]^*}^{*,*,*}(\mathbb{F}_p,\mathbb{F}_p)$. So 
$H^*(\strictAut(\mathbb{G}_{1/2}); \mathbb{F}_p) \cong \Cotor_{E_0\mathbb{F}_p[\strictAut(\mathbb{G}_{1/2})]^*}^{*,*,*}(\mathbb{F}_p,\mathbb{F}_p) \cong H^{*,*,*}(\mathcal{K}(2,2))$ as a graded $\mathbb{F}_p$-vector space. 
\end{proof}

\subsection{The cohomology of $\mathcal{E}(4,3,0),\mathcal{E}(4,4,0)$, and the automorphism group scheme of a $\hat{\mathbb{Z}}_p[\sqrt{p}]$-height $2$ formal $\hat{\mathbb{Z}}_p[\sqrt{p}]$-module.}

\begin{prop}\label{coh of E 4 3 0}
Suppose $p>3$.
Then we have an isomorphism of trigraded $C_2$-equivariant $\mathbb{F}_p$-algebras
\[ H^{*,*,*}(\mathcal{E}(4,3,0)) \cong \mathcal{A}_{4,3,0}\otimes_{\mathbb{F}_p} \Lambda(\zeta_2), \]
where 
\begin{dmath*} \mathcal{A}_{4,3,0} \cong 
\mathbb{F}_p\{ 1,h_{10},h_{11},h_{10}h_{30}, h_{11}h_{31}, e_{40}, \eta_2e_{40}, h_{10}\eta_2h_{30}, h_{11}\eta_2h_{31}, h_{10}\eta_2h_{30}h_{31}, h_{11}\eta_2h_{30}h_{31}, h_{10}h_{11}\eta_2h_{30}h_{31}\} ,\end{dmath*}
with tridegrees and the $C_2$-action as follows: 
\begin{equation}\label{degree chart 3}
\begin{array}{llllll}
\mbox{Coh.\ class}          & \mbox{Coh.\ degree} & \mbox{Int.\ degree} & \mbox{Rav.\ degree} & \mbox{Image\ under\ } \sigma \\
\hline \\
1                           & 0                   & 0                   & 0                   & 1  \\
h_{10}                      & 1                   & 2(p-1)              & 1                   & h_{11} \\
h_{11}                      & 1                   & 2p(p-1)             & 1                   & h_{10} \\
h_{10}h_{30}                 & 2                   & 4(p-1)              & 1+p                   & h_{11}h_{31} \\
h_{11}h_{31}                 & 2                   & 4p(p-1)             & 1+p                   & h_{10}h_{30} \\
e_{40}                      & 2                   & 0                   & 1+p                   & -e_{40}\\
\eta_2e_{40}                & 3                   & 0                   & 3+p                   & \eta_2e_{40}\\
h_{10}\eta_2h_{30}           & 3                   & 4(p-1)              & 3+p                   & -h_{11}\eta_2h_{31} \\
h_{11}\eta_2h_{31}           & 3                   & 4p(p-1)             & 3+p                   & -h_{10}\eta_2h_{30} \\
h_{10}\eta_2h_{30}h_{31}     & 4                   & 2(p-1)              & 3+2p                  & h_{11}\eta_2h_{30}h_{31} \\
h_{11}\eta_2h_{30}h_{31}     & 4                   & 2p(p-1)             & 3+2p                   & h_{10}\eta_2h_{30}h_{31} \\
h_{10}h_{11}\eta_2h_{30}h_{31} & 5                   & 0                 & 4+2p                   & -h_{10}h_{11}\eta_2h_{30}h_{31} \\
\hline \\
\zeta_2                    & 1                   & 0                   & 2                   & \zeta_2 ,
    \end{array} \end{equation}
where the cup products in $\mathcal{A}_{4,3,0}$ are all zero aside from the Poincar\'{e} duality cup products, i.e.,
each class has the dual class such that the cup product of the two is
$h_{10}h_{11}\eta_2h_{30}h_{31}$, and the remaining cup products are all zero. The classes in table~\ref{degree chart 3} are listed in order so that the class which is $n$ lines below $1$ is, up to multiplication by a unit in $\mathbb{F}_p$, the Poincar\'{e} dual of the class which is $n$ lines above $h_{10}h_{11}\eta_2h_{30}h_{31}$.
\end{prop}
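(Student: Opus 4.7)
The plan is to exploit the isomorphism $\mathcal{E}(4,3,0) \cong \mathcal{K}(2,3)$ of bigraded $C_2$-equivariant DGAs (Observation~\ref{properties of e dga}) and compute $H^{*,*,*}(\mathcal{K}(2,3))$ via the Cartan--Eilenberg spectral sequence associated to the extension of trigraded equivariant DGAs
\[ 1 \to \mathcal{K}(2,2) \to \mathcal{K}(2,3) \to \Lambda(h_{30}, h_{31}) \to 1. \]
Since the differential of $\mathcal{K}(2,3)$ sends each $h_{3,j}$ to an element of the subalgebra $\mathcal{K}(2,2)$, in the filtration by $\{h_{30}, h_{31}\}$-content the total differential splits as $d_0 + d_1$, with $d_1$ lowering the filtration by exactly one; consequently all higher spectral sequence differentials vanish and $E_2 = E_\infty$. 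Combined with Proposition~\ref{coh of K 2 2}, the $E_1$-page is $H^{*,*,*}(\mathcal{K}(2,2)) \otimes \Lambda(h_{30}, h_{31})$.

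I compute the $d_1$-differential directly from Definition~\ref{def of K dga}: using the convention $h_{i,j+2} = h_{i,j}$ and writing $\eta_2 = h_{20} - h_{21}$, one obtains
\[ d(h_{30}) = h_{10}h_{21} + h_{20}h_{10} = -h_{10}\eta_2, \qquad d(h_{31}) = h_{11}h_{20} + h_{21}h_{11} = h_{11}\eta_2. \]
Extending by the Leibniz rule and using that all non--Poincar\'{e}-duality products in $H^{*,*,*}(\mathcal{K}(2,2))$ vanish (Proposition~\ref{coh of K 2 2}), the kernel and image of $d_1$ are computed in each cohomological degree by direct enumeration. This produces exactly the twelve generators of $\mathcal{A}_{4,3,0}$ listed in the statement: the classes $1, h_{10}, h_{11}$ and the top-degree classes survive trivially; the pair $h_{10}h_{30}, h_{11}h_{31}$ are $d_1$-cocycles forming a $C_2$-orbit; and the $C_2$-anti-invariant class $e_{40}$ is identified as the cocycle $h_{11}h_{30} - h_{10}h_{31}$, which is in fact a cocycle in $\mathcal{K}(2,3)$ itself, since $d(h_{11}h_{30}) = -h_{10}h_{11}\eta_2 = d(h_{10}h_{31})$. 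Multiplying by $\eta_2$ then gives the cocycle representative $h_{10}\eta_2 h_{31} - h_{11}\eta_2 h_{30}$ of $\eta_2 e_{40}$.

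Finally, $\zeta_2 = h_{20} + h_{21}$ is a $d$-cocycle that cannot be a coboundary in $\mathcal{K}(2,3)$ for degree reasons, and multiplication by $\zeta_2$ is injective on cohomology because the $E_1$-page splits as a direct sum of its $\zeta_2$-free and $\zeta_2$-multiple pieces, each of which is closed under $d_1$; this yields the tensor factor $\Lambda(\zeta_2)$. The tridegrees in~\ref{degree chart 3} are read off directly from the tridegrees of the generators, and Poincar\'{e} duality together with the vanishing of all other products is checked by observing that any further product in $\mathcal{K}(2,3)$ either repeats an odd-degree exterior generator, or factors through $\eta_2^2 = 0$, or pairs to the top-degree class $h_{10}h_{11}\eta_2 h_{30}h_{31}$. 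The main obstacle is purely bookkeeping: tracking the signs from graded commutativity and producing explicit cocycle representatives matching each listed tridegree and each $C_n$-action formula; no deeper input is required beyond the $d_1$ computation above and Proposition~\ref{coh of K 2 2}.
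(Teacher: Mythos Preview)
Your proof is correct and takes essentially the same approach as the paper: both compute the Cartan--Eilenberg spectral sequence for the extension by $\Lambda(h_{30},h_{31})$ over the known cohomology of $\mathcal{K}(2,2)\cong\mathcal{E}(4,2,0)$, obtain the same $d_1$-differentials $d(h_{30})=-h_{10}\eta_2$, $d(h_{31})=h_{11}\eta_2$, and read off the answer. Your sign choice for $e_{40}$ is opposite to the paper's ($h_{10}h_{31}-h_{11}h_{30}$ there), but this is harmless; your explicit remark that $d=d_0+d_1$ on the nose, forcing $E_2=E_\infty$, is a nice addition the paper leaves implicit.
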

\begin{proof}
We use the isomorphism $\mathcal{K}(2,2) \cong \mathcal{E}(4,2,0)$, from Observation~\ref{properties of e dga}, to
compute the Cartan-Eilenberg spectral sequence for the extension of $C_2$-equivariant trigraded DGAs
\[ 1 \rightarrow \mathcal{E}(4,2,0) \rightarrow
  \mathcal{E}(4,3,0)\rightarrow
  \Lambda(h_{30},h_{31}) \rightarrow 1.\]
We have the differentials
\begin{eqnarray*} dh_{30} & = & -h_{10}\eta_2, \\
 dh_{31} & = & h_{11}\eta_2,\\
 d(h_{30}h_{31}) & = & -h_{10}\eta_2h_{31} - h_{11}\eta_2h_{30}.
\end{eqnarray*}
and their products with classes in $H^{*,*,*}(\mathcal{E}(4,2,0))$.
The nonzero products are
\begin{eqnarray*} 
 d(h_{11}h_{30}) & = & -h_{10}h_{11}\eta_2 \\
 d(h_{10}h_{31}) & = & -h_{10}h_{11}\eta_2 \\
 d(h_{10}h_{30}h_{31}) & = & h_{10}h_{11}\eta_2h_{30} \\
 d(h_{11}h_{30}h_{31}) & = & -h_{10}h_{11}\eta_2h_{31}. 
\end{eqnarray*}
We write $e_{40}$ for the cocycle $h_{10}h_{31} - h_{11}h_{30}$.
Extracting the output of the spectral sequence from knowledge of the differentials is routine. The Ravenel degrees in table~\ref{degree chart 3}, in principle, could have differed from those in table~\ref{degree chart 1} because we have reindexed the cohomology classes to use the Ravenel numbers $d_{4,i}$ instead of $d_{2,i}$; see the remark in Observation~\ref{properties of e dga}. However, since the only Koszul generators $h_{i,j}$ appearing in table~\ref{degree chart 1} have $i\leq 2$ and since $d_{2,i} = d_{4,i}$ for $i\leq 2$, the Ravenel degrees for the classes that are in both table~\ref{degree chart 1} and table~\ref{degree chart 3} are equal.
\end{proof}

\begin{prop}\label{coh of E 4 4 0}
Suppose $p>3$.
Then we have an isomorphism of trigraded $C_2$-equivariant $\mathbb{F}_p$-algebras
\[ H^{*,*,*}(\mathcal{E}(4,4,0)) \cong \mathcal{A}_{4,4,0} \otimes_{\mathbb{F}_p} \Lambda(\zeta_2,\zeta_4), \]
where 
\begin{dmath*}
\mathcal{A}_{4,4,0} \cong \mathbb{F}_p\{ 1,h_{10},h_{11},h_{10}h_{30}, h_{11}h_{31}, h_{10}\eta_4-\eta_2h_{30}, h_{11}\eta_4-\eta_2h_{31}, \eta_2e_{40}, h_{10}\eta_2h_{30}, h_{11}\eta_2h_{31}, h_{10}h_{30}\eta_4, h_{11}h_{31}\eta_4, \eta_4e_{40}+4\eta_2h_{30}h_{31}, h_{10}\eta_2h_{30}h_{31}, h_{11}\eta_2h_{30}h_{31}, h_{10}\eta_2h_{30}\eta_4, h_{11}\eta_2h_{31}\eta_4, h_{10}\eta_2h_{30}h_{31}\eta_4, h_{11}\eta_2h_{30}h_{31}\eta_4, h_{10}h_{11}\eta_2h_{30}h_{31}\eta_4\} ,\end{dmath*}
with tridegrees and the $C_2$-action as follows: 
\begin{equation}\label{degree chart 4}
\begin{array}{llllll}
\mbox{Coh.\ class}          & \mbox{Coh.\ degree} & \mbox{Int.\ degree} & \mbox{Rav.\ degree} & \mbox{Image\ under\ } \sigma \\
\hline \\
1                                 & 0                   & 0                   & 0                   & 1  \\
h_{10}                            & 1                   & 2(p-1)              & 1                   & h_{11} \\
h_{11}                            & 1                   & 2p(p-1)             & 1                   & h_{10} \\
h_{10}h_{30}                       & 2                   & 4(p-1)              & 1+p                   & h_{11}h_{31} \\
h_{11}h_{31}                       & 2                   & 4p(p-1)             & 1+p                   & h_{10}h_{30} \\
h_{10}\eta_4-\eta_2h_{30}          & 2                   & 2(p-1)              & 1+2p                 & -h_{11}\eta_4+\eta_2h_{30} \\
h_{11}\eta_4-\eta_2h_{31}          & 2                   & 2p(p-1)             & 1+2p                 & -h_{10}\eta_4+\eta_2h_{31} \\
\eta_2e_{40}                      & 3                   & 0                   & 3+p                   & \eta_2e_{40}\\
h_{10}\eta_2h_{30}                 & 3                   & 4(p-1)              & 3+p                   & -h_{11}\eta_2h_{31} \\
h_{11}\eta_2h_{31}                 & 3                   & 4p(p-1)             & 3+p                   & -h_{10}\eta_2h_{30} \\
h_{10}h_{30}\eta_4                 & 3                   & 4(p-1)              & 1+3p                  & -h_{11}h_{31}\eta_4 \\
h_{11}h_{31}\eta_4                 & 3                   & 4p(p-1)             & 1+3p                  & -h_{10}h_{30}\eta_4 \\
\eta_4e_{40}+4\eta_2h_{30}h_{31}   & 3                   & 0                   & 1+3p                   & \eta_4e_{40}+4\eta_2h_{30}h_{31}\\
h_{10}\eta_2h_{30}h_{31}           & 4                   & 2(p-1)              & 3+2p                  & h_{11}\eta_2h_{30}h_{31} \\
h_{11}\eta_2h_{30}h_{31}           & 4                   & 2p(p-1)             & 3+2p                   & h_{10}\eta_2h_{30}h_{31} \\
h_{10}\eta_2h_{30}\eta_4           & 4                   & 4(p-1)              & 3+3p                  & h_{11}\eta_2h_{31}\eta_4 \\
h_{11}\eta_2h_{31}\eta_4           & 4                   & 4p(p-1)             & 3+3p                  & h_{10}\eta_2h_{30}\eta_4 \\
h_{10}\eta_2h_{30}h_{31}\eta_4      & 5                   & 2(p-1)              & 3+4p                 & -h_{11}\eta_2h_{30}h_{31}\eta_4 \\
h_{11}\eta_2h_{30}h_{31}\eta_4      & 5                   & 2p(p-1)             & 3+4p                 & -h_{10}\eta_2h_{30}h_{31}\eta_4 \\
h_{10}h_{11}\eta_2h_{30}h_{31}\eta_4 & 6                   & 0                  & 4+4p                 & h_{10}h_{11}\eta_2h_{30}h_{31}\eta_4 \\
\hline \\
\zeta_2                          & 1                   & 0                   & 2                   & \zeta_2 \\
\zeta_4                          & 1                   & 0                   & 2p                  & \zeta_4 .
    \end{array} \end{equation}
The classes in table~\ref{degree chart 4} are listed in order so that the class which is $n$ lines below $1$ is, up to multiplication by a unit in $\mathbb{F}_p$, the Poincar\'{e} dual of the class which is $n$ lines above $h_{10}h_{11}\eta_2h_{30}h_{31}\eta_4$.
\end{prop}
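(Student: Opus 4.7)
The plan is to compute $H^{*,*,*}(\mathcal{E}(4,4,0))$ by running the Cartan--Eilenberg spectral sequence for the short exact sequence of trigraded $C_2$-equivariant DGAs
\[ 1 \to \mathcal{E}(4,3,0) \to \mathcal{E}(4,4,0) \to \Lambda(h_{40}, h_{41}) \to 1,\]
in direct parallel with the proof of Proposition~\ref{coh of E 4 3 0}. The $E_2$-page is $H^{*,*,*}(\mathcal{E}(4,3,0)) \otimes \Lambda(h_{40},h_{41})$, with the left tensor factor supplied by that proposition.

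The differentials on the new generators are determined by Proposition~\ref{presentation for associated graded} together with the convention $h_{i,j+2} = h_{i,j}$ (valid here since $n=4$) and the exterior-algebra relations $h_{20}^2 = h_{21}^2 = 0$; the result is $d(h_{40}) = h_{10}h_{31} - h_{11}h_{30} = e_{40}$ and $d(h_{41}) = -e_{40}$. A convenient change of basis sets $\zeta_4 = h_{40}+h_{41}$ (a permanent cocycle) and $\eta_4 = h_{40}-h_{41}$ (with $d(\eta_4) = 2e_{40}$). Since $p>3$ the factor $2$ is a unit, so the Cartan--Eilenberg differential on $E_2$ is determined by $[x] \otimes \eta_4 \mapsto \pm 2[x\cdot e_{40}] \otimes 1$ and $[x] \otimes \eta_4\zeta_4 \mapsto \pm 2[x\cdot e_{40}] \otimes \zeta_4$, extended by Leibniz. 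Identifying the kernel and image reduces to a finite enumeration of which products $[x]\cdot[e_{40}]$ in $\mathcal{A}_{4,3,0}$ are nonzero: by the Poincar\'{e} duality structure of Proposition~\ref{coh of E 4 3 0}, among the generators of $\mathcal{A}_{4,3,0}\otimes\Lambda(\zeta_2)$ the only classes $[x]$ with $[x\cdot e_{40}]\neq 0$ are $[1]$, $[\zeta_2]$, $[\eta_2 e_{40}]$, and $[\eta_2 e_{40}\zeta_2]$, so everything else contributes to $E_\infty$. The spectral sequence collapses here because the filtration on $\mathcal{E}(4,4,0)$ coming from the quotient $\Lambda(h_{40},h_{41})$ has only three nontrivial stages. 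As a consistency check, counting cocycles minus coboundaries gives $96-8-8 = 80$ surviving classes, matching the total rank $80$ listed in the introduction for $H^*(\strictAut(\mathbb{G}^{\hat{\mathbb{Z}}_p[\sqrt{p}]}_{1/2});\mathbb{F}_p)$.

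The main obstacle is lifting these $E_\infty$ classes back to explicit cocycle representatives in $\mathcal{E}(4,4,0)$, which accounts for the ``hybrid'' generators $h_{10}\eta_4 - \eta_2 h_{30}$, $h_{11}\eta_4 - \eta_2 h_{31}$, and $\eta_4 e_{40} + 4\eta_2 h_{30}h_{31}$ in table~\ref{degree chart 4}: at the cochain level the na\"{\i}ve representatives $\eta_2 h_{30}$ and $\eta_2 h_{30}h_{31}$ are not cocycles in $\mathcal{E}(4,3,0)$, but their Leibniz-derived defects cancel exactly against those of $h_{10}\eta_4$, $h_{11}\eta_4$, and $\eta_4 e_{40}$ produced by $d(\eta_4) = 2e_{40}$. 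I would verify each cocycle identity by a direct Leibniz-rule calculation, using as a key simplification that $\eta_2^2 = 0$ in the exterior algebra (which immediately kills the $h_{10}\eta_2^2$ and related ``cross terms'' that appear in intermediate computations). The specific numerical coefficient $4$ in $\eta_4 e_{40} + 4\eta_2 h_{30}h_{31}$ is determined by comparing $d(\eta_4 e_{40}) = 2e_{40}^2 = -4 h_{10}h_{11}h_{30}h_{31}$ with $d(\eta_2 h_{30}h_{31})$. Once the explicit cocycles are in hand, the tridegrees and $C_2$-action in table~\ref{degree chart 4} are immediate from those of $h_{40}$, $h_{41}$ (in particular $\sigma(\zeta_4) = \zeta_4$ and $\sigma(\eta_4) = -\eta_4$) combined with the input from Proposition~\ref{coh of E 4 3 0}, and the Poincar\'{e} duality cup-product structure is verified by a routine (but numerous) collection of explicit product computations on the cocycle representatives.
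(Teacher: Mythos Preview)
Your approach is correct and essentially identical to the paper's: both run the Cartan--Eilenberg spectral sequence for the extension $1 \to \mathcal{E}(4,3,0) \to \mathcal{E}(4,4,0) \to \Lambda(h_{40},h_{41}) \to 1$, pass to the basis $\zeta_4 = h_{40}+h_{41}$, $\eta_4 = h_{40}-h_{41}$, compute that $d(\zeta_4)=0$ while $d(\eta_4)$ is a unit multiple of $e_{40}$, identify the lone nontrivial product differential $d(\eta_2 e_{40}\,\eta_4)$ hitting the top class of $\mathcal{A}_{4,3,0}$, and then lift the surviving $E_\infty$-classes $h_{10}\eta_4$, $h_{11}\eta_4$, $\eta_4 e_{40}$ to honest cocycles by adding the lower-filtration corrections recorded in the table. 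Your explicit dimension count ($96-8-8=80$) and your remark that $\eta_2^2=0$ kills the cross terms are details the paper leaves implicit; the paper in turn phrases the lifting step the other way round (the $E_\infty$-classes $h_{1j}\eta_4$ fail to be cocycles and need correction), but the content is the same.
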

\begin{proof}
We use $H^{*,*,*}(\mathcal{E}(4,4,0))$, from Proposition~\ref{coh of E 4 3 0}, to
compute the Cartan-Eilenberg spectral sequence for the extension of $C_2$-equivariant trigraded DGAs
\[ 1 \rightarrow \mathcal{E}(4,3,0) \rightarrow
  \mathcal{E}(4,4,0)\rightarrow
  \Lambda(h_{40},h_{41}) \rightarrow 1.\]
A change of $\mathbb{F}_p$-linear basis is convenient here: we will write $\zeta_4$ for the element $h_{40} + h_{41}\in \Lambda(h_{40},h_{41})$, and we will write $\eta_4$ for $h_{40}-h_{41}$.
We have the differentials
\begin{eqnarray*} d\zeta_{4} & = & 0, \\
 d\eta_4 & = & h_{10}h_{31} + h_{30}h_{11} = e_{40},
\end{eqnarray*}
and one nonzero products with a class in $H^{*,*,*}(\mathcal{E}(4,3,0))$,
\begin{eqnarray*} 
 d(\eta_2e_{40}\eta_4) & = & h_{10}h_{11}\eta_2h_{30}h_{31}.
\end{eqnarray*}
Extracting the output of the spectral sequence from knowledge of the differentials is routine.
The three classes $h_{10}\eta_4, h_{11}\eta_4, \eta_4e_{40}$ in the $E_{\infty}$-term are not cocycles in $H^{*,*,*}(\mathcal{E}(4,4,0))$; adding
terms of lower Cartan-Eilenberg filtration to get cocycles yields the cohomology classes
$h_{10}\eta_4-\eta_2h_{30}, h_{11}\eta_4-\eta_2h_{31}, \eta_4e_{40}+4\eta_2h_{30}h_{31}$.
Note that this implies that there are nonzero multiplications in $\mathcal{A}_{4,4,0}$ other than those between each class and its Poincar\'{e} dual; for example, $h_{10}(h_{10}\eta_4 - \eta_{2}h_{30}) = -h_{10}\eta_2h_{30}$. 
\end{proof}

\begin{prop}\label{coh of ht 2 fm}
Suppose $p>5$. Then the cohomology $H^*(\strictAut(\mathbb{G}_{1/2}^{\hat{\mathbb{Z}}_p\left[\sqrt{p}\right]}); \mathbb{F}_p)$ of the
strict automorphism of the $\hat{\mathbb{Z}}_p\left[\sqrt{p}\right]$-height $2$ formal $\hat{\mathbb{Z}}_p\left[\sqrt{p}\right]$-module $\mathbb{G}^{\hat{\mathbb{Z}}_p\left[\sqrt{p}\right]}_{1/2}$ is isomorphic, as a graded $\mathbb{F}_p$-vector space, to
\[ H^{*,*,*}(\mathcal{E}(4,4,0)) \cong\mathcal{A}_{4,4,0} \otimes_{\mathbb{F}_p} \Lambda(\zeta_2,\zeta_4), \]
from Proposition~\ref{coh of E 4 4 0}.
The cohomological grading on $H^*(\strictAut(\mathbb{G}_{1/2}^{\hat{\mathbb{Z}}_p\left[\sqrt{p}\right]}); \mathbb{F}_p)$ corresponds to the cohomological grading on $H^{*,*,*}(\mathcal{E}(4,4,0))$.

The multiplication on $H^*(\strictAut(\mathbb{G}_{1/2}^{\hat{\mathbb{Z}}_p\left[\sqrt{p}\right]}); \mathbb{F}_p)$ furthermore agrees with the multiplication on $H^{*,*,*}(\mathcal{E}(4,4,0))$, modulo the question of exotic multiplicative extensions, i.e., jumps in Ravenel filtration in the products of elements in $H^*(\strictAut(\mathbb{G}_{1/2}^{\hat{\mathbb{Z}}_p\left[\sqrt{p}\right]}); \mathbb{F}_p)$. 

In particular, the Poincar\'{e} series expressing the $\mathbb{F}_p$-vector space dimensions of the grading degrees in $H^*(\strictAut(\mathbb{G}_{1/2}^{\hat{\mathbb{Z}}_p\left[\sqrt{p}\right]}); \mathbb{F}_p)$ is \[ (1+s)^2\left( 1 + 2s + 4s^2 + 6s^3 + 4s^4 + 2s^5 + s^6\right).\]
\end{prop}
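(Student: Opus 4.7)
The plan is to mirror the proof of Proposition~\ref{cohomology of ht 2 morava stab grp} by running, in sequence, the two relative spectral sequences of Theorem~\ref{relative ravenel-may ss} and showing that both collapse at their starting pages when $p>5$. The $E_2$-page of the first of these will turn out to be $H^{*,*,*}(\mathcal{E}(4,4,0))$, which has already been computed in Proposition~\ref{coh of E 4 4 0}, so most of the real work lies in demonstrating the collapse of the second spectral sequence.

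First I would analyze the relative Lie-May spectral sequence~\ref{relative may 1 ss} in the case $n=2$. The hypothesis $p>5$ forces $\floor{2n/(p-1)}=\floor{4/(p-1)}=0$, so the polynomial factor $P(b_{i,j})$ is trivial; and since $\floor{4p/(p-1)}=4$ for $p>5$, Observation~\ref{properties of e dga} identifies $\mathcal{K}(4,4)/I_{4,4}$ with $\mathcal{E}(4,4,0)$. Consequently the $E_2$-page is exactly $H^{*,*,*}(\mathcal{E}(4,4,0))$, concentrated in the fourth ($b$-power) grading degree zero; since $d_r$ raises that grading by $r-1\geq 1$, all differentials vanish. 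This gives $\Cotor_{E_0\mathbb{F}_p[\strictAut(\mathbb{G}^{\hat{\mathbb{Z}}_p\left[\sqrt{p}\right]}_{1/2})]^*}^{*,*,*}(\mathbb{F}_p,\mathbb{F}_p)\cong H^{*,*,*}(\mathcal{E}(4,4,0))$.

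Next I would run the relative May spectral sequence~\ref{relative may 2 ss}, whose $E_1$-page is now this $\Cotor$ group, and verify that $d_r$ vanishes on each algebra generator. Following the approach of Proposition~\ref{cohomology of ht 2 morava stab grp}, one picks a cobar representative in the cobar complex of the associated graded coalgebra, lifts it to a cochain in $C^{\bullet}(\mathbb{F}_p[\strictAut(\mathbb{G}^{\hat{\mathbb{Z}}_p\left[\sqrt{p}\right]}_{1/2})]^*)\cong C^{\bullet}(\mathbb{F}_p[t_1,t_2,\dots]/(t_i^{p^2}-t_i))$, and checks that the lift is itself a cocycle, possibly after correcting by a cochain of lower Ravenel filtration. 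The classes $h_{10},h_{11},\zeta_2,h_{10}\eta_2,h_{11}\eta_2$ and their Galois twists are pulled back from the analogous computations in Proposition~\ref{cohomology of ht 2 morava stab grp} via the map of spectral sequences in Theorem~\ref{relative ravenel-may ss}; $\zeta_4$ has no possible target in its tridegree; and the genuinely new generators such as $h_{10}h_{30}$, $h_{10}\eta_4-\eta_2 h_{30}$, $\eta_2 e_{40}$, $h_{10}h_{30}\eta_4$, and $\eta_4 e_{40}+4\eta_2 h_{30}h_{31}$ together with their $C_2$-images require new explicit cobar lifts.

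The main obstacle is producing these explicit cobar lifts and verifying the cocycle condition case by case; each such computation is in the same spirit as the verification for $h_{10}\eta_2$ in Proposition~\ref{cohomology of ht 2 morava stab grp}, where a small correction (there, $-t_1\otimes t_1^{p+1}$) suffices to promote the lift to a genuine cocycle. Poincar\'{e} duality of $\mathcal{A}_{4,4,0}$ together with $C_2$-equivariance cuts the number of generators that must be checked by roughly a factor of four. Once collapse is established, the multiplicative statement follows because the graded ring structure on $E_\infty$ is $H^{*,*,*}(\mathcal{E}(4,4,0))$ (up to exotic extensions jumping Ravenel filtration), and the Poincar\'{e} series $(1+s)^2(1+2s+4s^2+6s^3+4s^4+2s^5+s^6)$ is obtained by reading the cohomological degrees $1,2,4,6,4,2,1$ off the table in Proposition~\ref{coh of E 4 4 0} and multiplying by the contribution $(1+s)^2$ from the exterior factor $\Lambda(\zeta_2,\zeta_4)$.
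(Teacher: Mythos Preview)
Your proposal is correct and follows the same overall strategy as the paper: collapse spectral sequence~\ref{relative may 1 ss} using $\floor{4/(p-1)}=0$ for $p>5$, then verify collapse of spectral sequence~\ref{relative may 2 ss} generator by generator, invoking $C_2$-equivariance and Poincar\'{e} duality of $\mathcal{A}_{4,4,0}$ to cut the work.

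Two minor points of difference are worth flagging. First, the paper is more economical in the second step: rather than producing explicit cobar lifts for all the new generators as you propose, it writes down explicit cocycles only for $h_{10}h_{30}$ and for $\zeta_4$, and disposes of $h_{10}\eta_4-\eta_2h_{30}$, $\eta_2e_{40}$, $h_{10}\eta_2h_{30}$, $h_{11}\eta_2h_{31}$, and $\eta_4e_{40}+4\eta_2h_{30}h_{31}$ by observing that there is no class of the correct cohomological and internal degree with strictly \emph{lower} Ravenel degree, so no May differential can be supported. (Conversely, you handle $\zeta_4$ by a degree argument while the paper gives an explicit $1$-cocycle; either works.) Your approach would succeed but requires more computation than necessary. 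Second, your remark that $h_{10}\eta_2$ and $h_{11}\eta_2$ are ``pulled back'' from Proposition~\ref{cohomology of ht 2 morava stab grp} is a slip: those classes are killed already in $H^{*,*,*}(\mathcal{E}(4,3,0))$ (since $dh_{30}=-h_{10}\eta_2$ there), so they are not among the generators of $H^{*,*,*}(\mathcal{E}(4,4,0))$ that need checking.
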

\begin{proof}
We use Observation~\ref{properties of e dga} for the isomorphism $H^{*,*,*}(\mathcal{E}(4,4,0)) \cong H^{*,*,*}(\mathcal{K}(4,4)/I_{4,4})$, which appears in the input of spectral sequence~\ref{relative may 1 ss}.
Spectral sequence~\ref{relative may 1 ss} collapses immediately, since $p>5$ implies that
$1> \floor{\frac{4}{p-1}}$.
Hence $\Cotor_{E_0\mathbb{F}_p[\strictAut(\mathbb{G}^{\hat{\mathbb{Z}}_p\left[\sqrt{p}\right]}_{1/2})]^*}^{*,*,*}(\mathbb{F}_p,\mathbb{F}_p) \cong H^{*,*,*}(\mathcal{E}(4,4,0))$.

We now run spectral sequence~\ref{relative may 2 ss}. See the proof of Proposition~\ref{cohomology of ht 2 morava stab grp} for the general method we use.
It will be convenient to use the presentation 
\[ \mathbb{F}_p[t_{i,j}: i\geq 1, 0\leq j\leq 1]/\left( t_{i,j}^p \mbox{\ \ for\ all\ } i,j\right) \]
for 
$E_0\left(\mathbb{F}_p[\strictAut(\mathbb{G}^{\hat{\mathbb{Z}}_p\left[\sqrt{p}\right]}_{1/2})]^*\right)  \cong E_0\left( \mathbb{F}_p[t_1, t_2, \dots ]/\left( t_i^{p^2} - t_i\mbox{\ \ for\ all\ } i\right)\right)$, where $t_{i,j}$ is the image in the associated graded of $t_i^{p^j}$.
The coproduct on $\mathbb{F}_p[t_{i,j}: i\geq 1, 0\leq j\leq 1]/\left( t_{i,j}^p \mbox{\ \ for\ all\ } i,j\right)$, inherited from that of $\mathbb{F}_p[\strictAut(\mathbb{G}^{\hat{\mathbb{Z}}_p\left[\sqrt{p}\right]}_{1/2})]^*$, is given by
\[ \Delta(t_{i,j}) = \sum_{k=0}^i t_{k,j} \otimes t_{i-k, k+j}\]
for all $i < \floor{\frac{4p}{p-1}}$; reduce the $n=4$ case of 
Theorem~6.3.2 of~\cite{MR860042} 
modulo the ideal generated by $t_i^{p^2} - t_i$, for all $i$, to arrive at this formula.
\begin{description}
\item[$h_{10},h_{11}, \zeta_2$] There are no nonzero May differentials of any length on these classes, by the same computation as in the proof of Proposition~\ref{cohomology of ht 2 morava stab grp}.
\item[$h_{10}h_{30}, h_{11}h_{31}$] The class $h_{10}h_{30}$ is represented by 
the $2$-cocycle \[ t_{1,0}\otimes t_{3,0} - t_{1,0}\otimes t_{1,0}t_{2,0} - \frac{1}{2} t_{1,0}^2\otimes t_{2,0} + \frac{1}{2} t_{1,0}^2 \otimes t_{2,1} - \frac{1}{2} t_{1,0}^2 \otimes t_{1,0}t_{1,1} - \frac{1}{3} t_{1,0}^3\otimes t_{1,1}\]
in the cobar complex $C^{\bullet}\left(E_0\left(\mathbb{F}_p[\strictAut(\mathbb{G}^{\hat{\mathbb{Z}}_p\left[ \sqrt{p}\right]}_{1/2})]^*\right)\right)$, 
which lifts to the $2$-cochain
\[ t_{1}\otimes t_{3} - t_{1}\otimes t_{1}t_{2} - \frac{1}{2} t_{1}^2\otimes t_{2} + \frac{1}{2} t_{1}^2 \otimes t_{2}^p - \frac{1}{2} t_{1}^2 \otimes t_{1}^{p+1} - \frac{1}{3} t_{1,0}^3\otimes t_{1}^p\]
in the cobar complex $C^{\bullet}\left(\mathbb{F}_p[\strictAut(\mathbb{G}_{1/2})]^*\right)$.
Since this $2$-cochain is also a $2$-cocycle, all May differentials vanish on $h_{1,0}h_{3,0}$. The $C_2$-equivariance of the spectral sequence then tells us that all May 
differentials also vanish on $h_{1,1}h_{3,1}$.
\item[$h_{10}\eta_4, h_{11}\eta_4$] The only elements of internal degree $2(p-1)$ and cohomological degree $3$ are scalar multiples of $h_{10}\zeta_2\zeta_4$, but $h_{10}\zeta_2\zeta_4$ is of higher Ravenel degree than $h_{10}\eta_4$. Hence $h_{10}\eta_4$ cannot support a May differential of any length. By $C_2$-equivariance, the same is true of $h_{11}\eta_4$.
\item[$\eta_2e_{40}$] The only elements of internal degree $0$ and cohomological degree $4$ are $\mathbb{F}_p$-linear combinations of 
$\zeta_2\eta_2e_{40}, \zeta_4\eta_2e_{40}, \zeta_2\eta_4e_{40},$ and $\zeta_4\eta_4e_{40},$, but all four of these elements have higher Ravenel degree than $\eta_2e_{40}$, so again $\eta_2e_{40}$ cannot support a May differential of any length.
\item[$h_{10}\eta_2h_{30},h_{11}\eta_2h_{31},\eta_4e_{40}$] Similar degree considerations eliminate the possibility of nonzero May differentials on these classes.
\item[$\zeta_4$]
The class $\zeta_4$ is represented by 
the $1$-cocycle \[ t_{4,0} + t_{4,1} - t_{1,0}t_{3,1} - t_{1,1}t_{3,0} - \frac{1}{2}t_{2,0}^2 - \frac{1}{2}t_{2,1}^2 + t_{1,0}t_{1,1}t_{2,0} + t_{1,0}t_{1,1}t_{2,1}- \frac{1}{2}t_{1,0}^2t_{1,1}^2,\]
in the cobar complex $C^{\bullet}\left(E_0\left(\mathbb{F}_p[\strictAut(\mathbb{G}^{\hat{\mathbb{Z}}_p\left[ \sqrt{p}\right]}_{1/2})]^*\right)\right)$, 
which lifts to the $1$-cochain
\[ t_{4} + t_{4}^p - t_{1}t_{3}^p - t_{1}^pt_{3} - \frac{1}{2}t_{2}^2 - \frac{1}{2}t_{2}^{2p} + t_{1}^{p+1}t_{2} + t_{1}^{p+1}t_{2}^p- \frac{1}{2}t_{1}^{2p+2},\]
in the cobar complex $C^{\bullet}\left(\mathbb{F}_p[\strictAut(\mathbb{G}_{1/2})]^*\right)$.
Since this $1$-cochain is also a $1$-cocycle, all May differentials vanish on $\zeta_4$.
\end{description}
Now suppose that $q\geq 1$ is some integer and that we have already shown that $d_r$ vanishes on all classes, for all $r<q$. Then 
$d_r(\eta_2e_{40}\cdot \eta_4e_{40}) = 0$, i.e., $d_r$ vanishes on the duality class in the algebra $\mathcal{A}_{4,4,0}$.
For each element in that algebra, we have shown that $d_r$ vanishes on either that element, or on its Poincar\'{e} dual. Since $d_r$ also vanishes on the duality class, $d_r$ vanishes on all elements in that algebra.
Since $d_r$ also vanishes on $\zeta_2$ and $\zeta_4$, $d_r$ vanishes on all classes. By induction, the spectral sequence collapses with no nonzero differentials.
So 
$H^*(\strictAut(\mathbb{G}^{\hat{\mathbb{Z}}_p\left[\sqrt{p}\right]}_{1/2}); \mathbb{F}_p) \cong \Cotor_{E_0\mathbb{F}_p[\strictAut(\mathbb{G}^{\hat{\mathbb{Z}}_p\left[\sqrt{p}\right]}_{1/2})]^*}^{*,*,*}(\mathbb{F}_p,\mathbb{F}_p) \cong H^{*,*,*}(\mathcal{E}(4,4,0))$ as a graded $\mathbb{F}_p$-vector space. 
\end{proof}

\subsection{The cohomology of $\mathcal{E}(4,4,1)$ and $\mathcal{E}(4,4,2)$.}

\begin{prop}\label{coh of E 4 4 1}
Suppose $p>3$.
Then we have an isomorphism of quad-graded $C_2$-equivariant $\mathbb{F}_p$-algebras
\[ H^{*,*,*,*}(\mathcal{E}(4,4,1)) \cong H^{*,*,*}(\mathcal{E}(4,4,0))\otimes_{\mathbb{F}_p} \Lambda(w_{1,0},w_{1,1}) ,\]
with elements in $H^{s,t,u}(\mathcal{E}(4,4,0))$ in quad-degree $(s,t,u,0)$, with $w_{1,j}$ in quad-degree $(1,1,2(p-1)p^j,1)$,
and with the $C_2$-action given by $\sigma(w_{1,i}) = w_{1,i+1}$ and $w_{1,2} = -w_{1,0}$.
\end{prop}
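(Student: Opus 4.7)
The plan is to use the short exact sequence of $C_2$-equivariant trigraded DGAs
\[ 1 \rightarrow \mathcal{E}(4,4,0) \rightarrow \mathcal{E}(4,4,1) \rightarrow \Lambda(w_{1,0},w_{1,1}) \rightarrow 1\]
supplied by the fourth bullet of Observation~\ref{properties of e dga}, and to run the associated Cartan-Eilenberg spectral sequence, whose $E_2$-page is $H^{*,*,*}(\mathcal{E}(4,4,0))\otimes_{\mathbb{F}_p} H^{*}(\Lambda(w_{1,0},w_{1,1}))$. The key observation is that the differential formula from Proposition~\ref{presentation for associated graded}, namely $d(w_{i,j}) = \sum_{k=1}^{i-1}\bigl(w_{k,j}h_{i-k,j+k} + h_{k,j}w_{i-k,j+k}\bigr)$, is an empty sum when $i=1$, so $d(w_{1,0}) = d(w_{1,1}) = 0$ in $\mathcal{E}(4,4,1)$ itself.

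Because $w_{1,0}$ and $w_{1,1}$ are already cocycles in the total DGA, they lift from the quotient $\Lambda(w_{1,0},w_{1,1})$ to honest cocycles in $\mathcal{E}(4,4,1)$; hence every Cartan-Eilenberg differential $d_r$ vanishes on them, the spectral sequence collapses at $E_2$, and the same lifts rule out any exotic multiplicative extensions. In fact, once this is recognized, the inclusion $\mathcal{E}(4,4,0)\hookrightarrow \mathcal{E}(4,4,1)$ together with $w_{1,j}\mapsto w_{1,j}$ exhibits $\mathcal{E}(4,4,1)$ as an honest tensor product of DGAs $\mathcal{E}(4,4,0)\otimes_{\mathbb{F}_p}\Lambda(w_{1,0},w_{1,1})$, with trivial differential on the exterior factor, so one could equally well bypass the spectral sequence and invoke K\"unneth directly.

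Finally, the quad-degrees and $C_2$-action are to be read off from the construction of $w_{i,j}$ in Proposition~\ref{presentation for associated graded} as the residue class of $h_{i,j}-h_{i,j+n/2}\in I_{4,4}$ modulo $I_{4,4}^2$. This gives $w_{1,j}$ cohomological degree $1$, Ravenel degree $d_{4,1}=1$, internal degree $2(p-1)p^j$, and arithmetic degree $1$, i.e.\ quad-degree $(1,1,2(p-1)p^j,1)$. The $C_2$-action then follows from $\sigma(w_{i,j})=w_{i,j+1}$ together with the identification $w_{i,j+n/2}=-w_{i,j}$, which forces $w_{1,2}=-w_{1,0}$. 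The ostensible ``main obstacle'' is essentially fictitious: once one notices that the $i=1$ case of the differential formula is empty, the DGA extension trivializes and nothing of substance remains.
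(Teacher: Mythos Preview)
Your proof is correct and follows essentially the same approach as the paper: both run the Cartan-Eilenberg spectral sequence for the extension $1 \to \mathcal{E}(4,4,0) \to \mathcal{E}(4,4,1) \to \Lambda(w_{1,0},w_{1,1}) \to 1$ and observe that it collapses because the $i=1$ case of the differential formula in Proposition~\ref{presentation for associated graded} is an empty sum. Your version is more detailed (explicitly noting the K\"unneth shortcut and reading off the quad-degrees and equivariance), but the core argument is identical.
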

\begin{proof}
By Proposition~\ref{presentation for associated graded}, the differentials
on the classes $w_{1,i}$ are zero, so 
the Cartan-Eilenberg spectral sequence for the extension
\[ 1 \rightarrow \mathcal{E}(4,4,0) \rightarrow \mathcal{E}(4,4,1) \rightarrow \Lambda(w_{1,0},w_{1,1}) \rightarrow 1\]
collapses with no differentials.
\end{proof}

In Proposition~\ref{coh of E 4 4 2}, we can give a relatively compact description of the cohomology of the DGA $\mathcal{E}(4,4,2)$ if we give a set of generators
for $H^{*,*,*,*}(\mathcal{E}(4,4,2))$ as a $\Lambda(w_{10},w_{11})$-module, rather than an $\mathbb{F}_p$-linear basis.
We use the following notation for the $\Lambda(w_{10},w_{11})$-modules which arise:
\begin{itemize}
\item we write $Px$ for the free $\Lambda(w_{10},w_{11})$-module on a generator $x$ (here ``$P$'' stands for ``projective''),
\item we write $M_0x$ for $Px/w_{11}Px$,
\item we write $M_1x$ for $Px/w_{10}Px$,
\item we write $Tx$ for $Px/(w_{10},w_{11})Px$ (here ``$T$'' stands for ``truncated''), and
\item given a sequence of symbols $N_1x_1, N_2x_2, \dots ,N_nx_n$ of the above form, we write $x_0\{ N_1x_1, N_2x_2, \dots ,N_nx_n\}$ for the direct sum
$N_1x_0x_1\oplus N_2x_0x_2 \oplus \dots \oplus N_nx_0x_n$.
\end{itemize}
\begin{prop}\label{coh of E 4 4 2}
Suppose $p>3$.
Then we have an isomorphism of quad-graded $C_2$-equivariant $\mathbb{F}_p$-algebras between $H^{*,*,*,*}(\mathcal{E}(4,4,2))$
and the algebra:
\begin{align*} 
 1\{ P1, M_0w_{11}w\eta_2, M_1w_{10}w\zeta_2, Tw_{10}w_{11}w\eta_2w\zeta_2\} \\
 \oplus\ \  h_{10}\{ M_01, Pw\eta_2, M_0w\zeta_2, Pw\eta_2w\zeta_2\} \\
 \oplus\ \  h_{11}\{ M_11, M_1w\eta_2, Pw\zeta_2, Pw\eta_2w\zeta_2\} \\
 \oplus\ \  h_{10}h_{30}\{ P1, Pw\eta_2, Pw\zeta_2, Pw\eta_2w\zeta_2\} \\
 \oplus\ \  h_{11}h_{31}\{ P1, Pw\eta_2, Pw\zeta_2, Pw\eta_2w\zeta_2\} \\
 \oplus\ \  (h_{10}\eta_4 - \eta_2h_{30})\{ P1, M_0w_{11}w\eta_2, M_1w_{10}w\zeta_2, Tw_{10}w_{11}w\eta_2w\zeta_2\} \\
 \oplus\ \  (h_{11}\eta_4 - \eta_2h_{31})\{ P1, M_0w_{11}w\eta_2, M_1w_{10}w\zeta_2, Tw_{10}w_{11}w\eta_2w\zeta_2\} \\
 \oplus\ \  \eta_2e_{40}\{ T1, M_1w\eta_2, M_0w\zeta_2, Pw\eta_2w\zeta_2\} \\
 \oplus\ \  h_{10}\eta_2h_{30}\{ M_01, Pw\eta_2, M_0w\zeta_2, Pw\eta_2w\zeta_2\} \\
 \oplus\ \  h_{11}\eta_2h_{31}\{ M_11, M_1w\eta_2, Pw\zeta_2, Pw\eta_2w\zeta_2\} \\
 \oplus\ \  h_{10}h_{30}\eta_4\{ P1, Pw\eta_2, M_1w_{10}w\zeta_2, M_1w_{10}w\eta_2w\zeta_2\} \\
 \oplus\ \  h_{11}h_{31}\eta_4\{ P1, M_0w_{11}w\eta_2, Pw\zeta_2, M_0w_{11}w\eta_2w\zeta_2\} \\
 \oplus\ \  (\eta_4e_{40} + 4\eta_2h_{30}h_{31})\{ P1, M_0w_{11}w\eta_2, M_1w_{10}w\zeta_2, Tw_{10}w_{11}w\eta_2w\zeta_2\} \\
 \oplus\ \  h_{10}\eta_2h_{30}h_{31}\{ T1, M_1w\eta_2, M_0w\zeta_2, Pw\eta_2w\zeta_2\} \\
 \oplus\ \  h_{11}\eta_2h_{30}h_{31}\{ T1, M_1w\eta_2, M_0w\zeta_2, Pw\eta_2w\zeta_2\} \\
 \oplus\ \  h_{10}\eta_2h_{30}\eta_4\{ P1, Pw\eta_2, Pw\zeta_2, Pw\eta_2w\zeta_2\} \\
 \oplus\ \  h_{11}\eta_2h_{31}\eta_4\{ P1, Pw\eta_2, Pw\zeta_2, Pw\eta_2w\zeta_2\} \\
 \oplus\ \  h_{10}\eta_2h_{30}h_{31}\eta_4\{ P1, Pw\eta_2, M_1w_{10}w\zeta_2, M_1w_{10}w\eta_2w\zeta_2\} \\
 \oplus\ \  h_{11}\eta_2h_{30}h_{31}\eta_4\{ P1, Pw\eta_2, M_0w_{11}w\zeta_2, M_0w_{11}w\eta_2w\zeta_2\} \\
 \oplus\ \  h_{10}h_{11}\eta_2h_{30}h_{31}\eta_4\{ T1, M_1w\eta_2, M_0w\zeta_2, Pw\eta_2w\zeta_2\} 
\end{align*}
\end{prop}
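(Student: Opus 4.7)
The plan is to run the Cartan-Eilenberg spectral sequence for the short exact sequence of quad-graded $C_2$-equivariant DGAs
\[ 1 \to \mathcal{E}(4,4,1) \to \mathcal{E}(4,4,2) \to \Lambda(w_{20}, w_{21}) \to 1 \]
from Observation~\ref{properties of e dga}. The $E_1$-page is $H^{*,*,*,*}(\mathcal{E}(4,4,1)) \otimes_{\mathbb{F}_p} \Lambda(w_{20}, w_{21})$ by Proposition~\ref{coh of E 4 4 1}. Following the convention used in Proposition~\ref{coh of K 2 2}, I will change basis to $w\zeta_2 := w_{20}+w_{21}$ and $w\eta_2 := w_{20}-w_{21}$. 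Proposition~\ref{presentation for associated graded} then yields
\[ d_1(w\zeta_2) = -2 h_{11} w_{10}, \qquad d_1(w\eta_2) = 2 h_{10} w_{11}, \]
with $d_1$ extended by the Leibniz rule and vanishing on all generators of $H^{*,*,*,*}(\mathcal{E}(4,4,1))$.

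The bulk of the work is computing $E_2 = H^*(E_1, d_1)$. Because $d_1$ involves neither $\zeta_2$ nor $\zeta_4$, these classes split off as an exterior tensor factor, reducing the core calculation to the sub-complex $\mathcal{A}_{4,4,0} \otimes \Lambda(w_{10}, w_{11}, w\zeta_2, w\eta_2)$ equipped with $d_1$. For each of the twenty $\mathbb{F}_p$-basis generators $c$ of $\mathcal{A}_{4,4,0}$ from Proposition~\ref{coh of E 4 4 0}, I would tabulate the $d_1$-cohomology of the sixteen-element complex $c \cdot \Lambda(w_{10}, w_{11}, w\zeta_2, w\eta_2)$. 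The key mechanism is that the relations $2h_{10} w_{11} = d_1(w\eta_2)$ and $-2h_{11} w_{10} = d_1(w\zeta_2)$ force certain products of $c$ by $w_{10}$ or $w_{11}$ to be $d_1$-coboundaries, but only when $c$ admits a suitable preimage in $E_1$. For instance, $h_{10}$ alone yields an $M_0$-summand since $h_{10} w_{11} = \tfrac{1}{2} d_1(w\eta_2)$ directly; but $h_{10} h_{30}$ yields a free $P$-summand because the only natural preimage candidate, $h_{30} w\eta_2$, is unavailable ($h_{30}$ is not a cocycle in $\mathcal{E}(4,4,1)$).

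The DGA differential on $\mathcal{E}(4,4,2)$ has no component dropping $w_{2,*}$-filtration by more than one, but this alone does not immediately force collapse at $E_2$: a class $[x] \in E_2^2$ could in principle support a $d_2$ landing in $E_2^0$. I expect these potential $d_2$-values to vanish uniformly in our setting, because the relevant obstructions (products like $h_{10} h_{20} w_{10} w_{11}$) reduce via the identity $h_{10} h_{20} = \tfrac{1}{2} h_{10}\zeta_2 - \tfrac{1}{2} d(h_{30})$ to multiples of $h_{10}\zeta_2 w_{10} w_{11}$, which are themselves $d_1$-coboundaries of classes such as $\zeta_2 w_{10} w\eta_2$. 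Verifying this case-by-case on all filtration-$2$ $d_1$-cocycles yields the collapse $E_2 = E_\infty$.

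The main obstacle will be the sheer volume of bookkeeping: twenty generators, four candidate $w$-products each, and a four-dimensional $\Lambda(w_{10},w_{11})$-module structure on each. The $C_2$-equivariance (acting as $w_{10} \leftrightarrow w_{11}$ and $w\zeta_2 \mapsto w\zeta_2$, $w\eta_2 \mapsto -w\eta_2$) roughly halves the work. The trickiest cases will be the ``mixed'' generators such as $h_{10}\eta_2 h_{30} h_{31}\eta_4$, whose contribution is a combination of $P$ and $M_1$ summands; here explicit cocycle representatives in $\mathcal{E}(4,4,2)$ must be tracked in order to resolve the precise module structure and any multiplicative extensions between $E_2$-page classes.
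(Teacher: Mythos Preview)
Your approach is essentially the same as the paper's: both run the Cartan-Eilenberg spectral sequence for the extension $1\to\mathcal{E}(4,4,1)\to\mathcal{E}(4,4,2)\to\Lambda(w_{20},w_{21})\to 1$, make the same basis change to $w\zeta_2,w\eta_2$, and obtain $d_1(w\eta_2)=2h_{10}w_{11}$, $d_1(w\zeta_2)=-2h_{11}w_{10}$.

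Two remarks. First, the heart of the paper's computation is an explicit table of the products $c\cdot h_{10}$ and $c\cdot h_{11}$ in $\mathcal{A}_{4,4,0}$, several of which are nontrivial and require coboundary corrections at the level of the DGA $\mathcal{E}(4,4,0)$ (for instance $(h_{10}\eta_4-\eta_2h_{30})h_{11}=-\tfrac{3}{4}\eta_2e_{40}$ modulo a coboundary, and $(\eta_4e_{40}+4\eta_2h_{30}h_{31})h_{10}=-6h_{10}\eta_2h_{30}h_{31}$ modulo a coboundary). Your ``preimage'' framing is equivalent, but be aware that the nontrivial products are where the actual content lies; your proposal does not flag that these need to be worked out. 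Relatedly, your phrase ``the sixteen-element complex $c\cdot\Lambda(w_{10},w_{11},w\zeta_2,w\eta_2)$'' is imprecise: these are not $d_1$-subcomplexes, since $d_1(c\cdot w\eta_2)=2(ch_{10})\cdot w_{11}$ lands in a different $c$-block. Your later discussion shows you understand this, but the organization of the answer by $c$-block reflects the output, not a decomposition of the complex.

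Second, you are actually more careful than the paper about the possibility of a nonzero $d_2$; the paper simply asserts that extracting the answer from the listed $d_1$-differentials is ``routine (although tedious)'' and does not explicitly address $d_2$. Your observation that the differential dropping filtration by at most one does not by itself force collapse at $E_2$ is correct, and your sketch for why the potential $d_2$'s vanish is a genuine addition.
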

\begin{proof}
We run the Cartan-Eilenberg spectral sequence for the extension
\[ 1 \rightarrow \mathcal{E}(4,4,1) \rightarrow \mathcal{E}(4,4,2) \rightarrow \Lambda(w_{2,0},w_{2,1}) \rightarrow 1.\]
A change of $\mathbb{F}_p$-linear basis is convenient here: let $w\zeta_2 = w_{20}+w_{21}$ and let $w\eta_2 = w_{20}-w_{21}$.
By Proposition~\ref{presentation for associated graded}, we have $d(w\eta_2) = 2h_{10}w_{11}$ and $d(w\zeta_2) = -2h_{11}w_{10}$.
Computing products of these differentials with classes in $\mathcal{E}(4,4,1)$ requires that we know products of the classes in $\mathcal{A}_{4,4,0}$ 
with $h_{10}$ and $h_{11}$. The nonzero products which are nontrivial (i.e., not procuts with $1$, and not Poincar\'{e} duality products) are:
\begin{align}
 \nonumber    \left( h_{10}\eta_4 - \eta_2h_{30}\right) h_{10} &= -h_{10}\eta_2h_{30}, \\
 \nonumber    \left( h_{11}\eta_4 - \eta_2h_{31}\right) h_{11} &= -h_{11}\eta_2h_{31}, \\
 \label{eq 1} \left( h_{10}\eta_4 - \eta_2h_{30}\right) h_{11} &= \frac{-3}{4} \eta_2e_{40} - \delta\left( \frac{1}{2}\eta_2\eta_4  - \frac{1}{2}h_{30}h_{31}\right), \\
 \label{eq 2} \left( h_{11}\eta_4 - \eta_2h_{31}\right) h_{10} &= \frac{3}{4} \eta_2e_{40} - \delta\left( \frac{-1}{2}\eta_2\eta_4  - \frac{1}{2}h_{30}h_{31}\right), \\
 \label{eq 3} \left( h_{10}h_{30}\eta_4 \right) h_{11}         &= -h_{10}\eta_2h_{30}h_{31} + \delta(\frac{1}{2} \eta_2h_{30}\eta_4) \\
 \label{eq 4} \left( h_{11}h_{31}\eta_4 \right) h_{10}         &=  h_{11}\eta_2h_{30}h_{31} - \delta(\frac{1}{2} \eta_2h_{31}\eta_4) \\
 \label{eq 5} \left(\eta_4 e_{40} + 4\eta_2h_{30}h_{31}\right)h_{10} &= -6h_{10}\eta_2h_{30}h_{31} + \delta(\eta_2h_{30}h_{31}) \\
 \label{eq 6} \left(\eta_4 e_{40} + 4\eta_2h_{30}h_{31}\right)h_{11} &= -6h_{11}\eta_2h_{30}h_{31} + \delta(\eta_2h_{30}h_{31}) ,\end{align}
where the symbol $\delta$ in~\ref{eq 1}, \ref{eq 2}, \ref{eq 3}, \ref{eq 4}, \ref{eq 5}, and \ref{eq 6} is the differential in the DGA $\mathcal{E}(4,4,2)$, not the differential in the spectral sequence.

Consequently, we get that the spectral sequence differentials are:
\begin{eqnarray*} 
 d(w\eta_2) &=& 2h_{10}w_{11} \\
 d(w\zeta_2) &=& -2h_{11}w_{10} \\
 d\left( (h_{10}\eta_4 - \eta_2h_{30})w\eta_2\right) &=& -2h_{10}\eta_2h_{30}w_{11} \\
 d\left( (h_{10}\eta_4 - \eta_2h_{30})w\zeta_2\right) &=& \frac{3}{2} \eta_2e_{40} w_{10} \\
 d\left( (h_{11}\eta_4 - \eta_2h_{31})w\eta_2\right) &=&  \frac{3}{2} \eta_2e_{40} w_{11} \\
 d\left( (h_{11}\eta_4 - \eta_2h_{31})w\zeta_2\right) &=& 2h_{11}\eta_2h_{31}w_{10} \\
 d\left( (h_{10}h_{30}\eta_4) w\zeta_2\right) &=& 2h_{10}\eta_2h_{30}h_{31}w_{10} \\
 d\left( (h_{11}h_{31}\eta_4) w\eta_2\right) &=& 2h_{11}\eta_2h_{30}h_{31}w_{11} \\
 d\left( (\eta_4e_{40} + 4\eta_2h_{30}h_{31}) w\zeta_2\right) &=& 12h_{11}\eta_2h_{30}h_{31}w_{10} \\
 d\left( (\eta_4e_{40} + 4\eta_2h_{30}h_{31}) w\eta_2\right) &=& -12h_{10}\eta_2h_{30}h_{31}w_{11} \\
 d\left( (h_{10}\eta_2h_{30}h_{31}\eta_4) w\zeta_2 \right) &=& -2h_{10}h_{11}\eta_2h_{30}h_{31}\eta_4w_{10} \\
 d\left( (h_{11}\eta_2h_{30}h_{31}\eta_4) w\eta_2 \right) &=& -2h_{10}h_{11}\eta_2h_{30}h_{31}\eta_4w_{11} \\
 d\left( w\eta_2w\zeta_2 \right) &=& 2h_{10}w_{11}w\zeta_2 + 2h_{11}w_{10}w\eta_2 \\
 d\left( (h_{10}\eta_4 - \eta_2h_{30}) w\eta_2w\zeta_2 \right) &=& -2h_{10}\eta_2h_{30}w_{11}w\zeta_2 - \frac{3}{2} \eta_2e_{40} w_{10} w\eta_2 \\
 d\left( (h_{11}\eta_4 - \eta_2h_{31}) w\eta_2w\zeta_2 \right) &=& 2h_{11}\eta_2h_{31}w_{10}w\eta_2 + \frac{3}{2} \eta_2e_{40} w_{11} w\zeta_2 \\
 d\left( (h_{10}h_{30}\eta_4) w\eta_2 w\zeta_2\right) &=& -2h_{10}\eta_2h_{30}h_{31}w_{10}w\eta_2 \\
 d\left( (h_{11}h_{31}\eta_4) w\eta_2 w\zeta_2\right) &=& 2h_{11}\eta_2h_{30}h_{31}w_{11}w\zeta_2 \\
 d\left( (\eta_4e_{40} + 4\eta_2h_{30}h_{31}) w\eta_2 w\zeta_2\right) &=& -12h_{10}\eta_2h_{30}h_{31}w_{11}w\zeta_2 - 12h_{11}\eta_2h_{30}h_{31}w_{10}w\eta_2 \\
 d\left( (h_{10}\eta_2h_{30}h_{31}\eta_4) w\eta_2 w\zeta_2 \right) &=& 2h_{10}h_{11}\eta_2h_{30}h_{31}\eta_4w_{10}w\eta_2 \\
 d\left( (h_{11}\eta_2h_{30}h_{31}\eta_4) w\eta_2 w\zeta_2 \right) &=& -2h_{10}h_{11}\eta_2h_{30}h_{31}\eta_4w_{11}w\zeta_2 ,
\end{eqnarray*}
along with products of these differentials with $w_{10}$ and $w_{11}$.
Computing the output of the spectral sequence from knowledge of the differentials is routine (although tedious).
\end{proof}

\subsection{The remaining spectral sequences.}

The remaining spectral sequences are more difficult to typeset due to their size, and especially by the time one reaches the last spectral sequence~\ref{may 2 ss}, some computations (of cocycle representatives for certain cohomology classes) are prohibitively difficult to do by hand, although not difficult to do by computer. I do not know yet what the best way is to visually depict these computations. (The absence of these spectral sequences from this document is the main reason that, as written several times already, this document is an ``announcement,'' not an actual paper.)

The spectral sequences remaining to be added to this document are:
\begin{enumerate}
\item the Cartan-Eilenberg spectral sequence for the extension
\[ 1 \rightarrow \mathcal{E}(4,4,2) \rightarrow \mathcal{E}(4,4,3)\rightarrow \Lambda(w_{30},w_{31}) \rightarrow 1,\]
\item the Cartan-Eilenberg spectral sequence for the extension
\[ 1 \rightarrow \mathcal{E}(4,4,3) \rightarrow \mathcal{E}(4,4,4)\rightarrow \Lambda(w_{40},w_{41}) \rightarrow 1,\]
\item the ``height-doubling'' spectral sequence~\ref{height-doubling ss}:
\[ H^{*,*,*,*}(\mathcal{E}(4,4,4))\cong H^{*,*,*,*}(E_0\mathcal{K}(4,4)) \Rightarrow H^{*,*,*}(\mathcal{K}(4,4)),\]
\item the Lie-May spectral sequence~\ref{may 1 ss}, which is easily seen to collapse immediately, yielding the isomorphism
\[ H^{*,*,*}(\mathcal{K}(4,4)) \cong \Cotor_{E_0\mathbb{F}_p[\strictAut(\mathbb{G}_{1/4})]^*}^{*,*,*}(\mathbb{F}_p,\mathbb{F}_p),\]
\item and the May spectral sequence~\ref{may 2 ss},
\[\Cotor_{E_0\mathbb{F}_p[\strictAut(\mathbb{G}_{1/4})]^*}^{*,*,*}(\mathbb{F}_p,\mathbb{F}_p) \Rightarrow H^*(\strictAut(\mathbb{G}_{1/4}); \mathbb{F}_p),\]
which also collapses with no differentials (although arriving at this conclusion requires substantial computation!).
\end{enumerate}

\section{What ought to happen at heights greater than four?}

For each nonnegative integer $n$ and each prime $p$ such that $p>n+1$, 
the $\mathbb{F}_p$-vector space dimension of $H^*(\strictAut(\mathbb{G}_{1/n}); \mathbb{F}_p)$ is divisible by $2^n$; since 
$H^*(\strictAut(\mathbb{G}_{1/n}); \mathbb{F}_p) \otimes_{\mathbb{F}_p} \mathbb{F}_p[v_n^{\pm 1}]$ is the input for the first of a sequence of $n$ Bockstein spectral sequence computing the height $n$ layer in the chromatic spectral sequence $E_1$-term (or, dually, the input $H^*(\Aut(\mathbb{G}_{1/n}); E(\mathbb{G}_{1/n})_*)$ for the descent spectral sequence used to compute $\pi_*(L_{K(n)}S)$), and since each one of these Bockstein spectral sequence halves the $\mathbb{F}_p$-vector space dimension of ``most'' of the $v_n$-periodized copies of $H^*(\strictAut(\mathbb{G}_{1/n}); \mathbb{F}_p)$, the quotient 
$\left(\dim_{\mathbb{F}_p}\left(H^*(\strictAut(\mathbb{G}_{1/n}); \mathbb{F}_p)\right)\right)/2^n$ is an important number: it gives us an excellent estimate of the ``size'' of the $n$th Greek letter family in the stable homotopy groups of spheres, at large primes.
Here is a table of these numbers, again assuming that $p>n+1$:
\begin{equation}\label{ranks of coh}
\begin{array}{lll}
\mbox{n}          & \dim_{\mathbb{F}_p}\left(H^*(\strictAut(\mathbb{G}_{1/n}); \mathbb{F}_p)\right)\mbox{\ \ \ \ } & \mbox{quotient\ by\ \ } 2^n \\
0 & 1 & 1 \\
1 & 2 & 1 \\
2 & 12 & 3 \\ 
3 & 152 & 19 \\
4 & 3440 & 215 \end{array}\end{equation}
The sequence $1,1,3,19,215$ is the initial sequence of Taylor coefficients of the generating function 
\begin{equation}\label{func eq} A(t) = \sum_{n\geq 0} \left( \frac{t^n}{n!}\prod_{k=1}^nA(kt)\right) ,\end{equation}
that is, if
\begin{equation}\label{taylor series} A(t) = \sum_{n\geq 0} \frac{a_n}{n!}t^n\end{equation}
satisfies~\ref{func eq}, then $a_0,a_1,a_2,a_3,a_4$ are equal to $1,1,3,19,215$, respectively. (Thanks to the contributors and editors of the Online Encyclopedia of Integer Sequences, without which I would not have found this generating function!)

\begin{conjecture}\label{recursion conjecture}
Let $a_0, a_1, \dots$ be the Taylor coefficients in the power series~\ref{taylor series} satisfying the equation~\ref{func eq}. Then the mod $p$ cohomology of the height $n$ strict Morava stabilizer group scheme at primes $p>n+1$ has total $\mathbb{F}_p$-vector space dimension $2^na_n$. That is,
\[ \dim_{\mathbb{F}_p}H^*\left(\strictAut(\mathbb{G}_{1/n}); \mathbb{F}_p\right) = 2^na_n.\]

In particular, table~\ref{ranks of coh} continues:
\begin{equation*}
\begin{array}{lll}
\mbox{n}          & \dim_{\mathbb{F}_p}\left(H^*(\strictAut(\mathbb{G}_{1/n}); \mathbb{F}_p)\right)\mbox{\ \ \ \ } & \mbox{ quotient\ by\ \ } 2^n \\
0 & 1 & 1 \\
1 & 2 & 1 \\
2 & 12 & 3 \\ 
3 & 152 & 19 \\
4 & 3440 & 215 \\
5 & 128512 & 4016 \\
6 & 7621888 & 119092 \\ 
7 & 704410240 & 5503205 \\
8 & 100647546112 & 393154477 .
\end{array}
\end{equation*}
\end{conjecture}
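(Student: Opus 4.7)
The plan is to reduce the conjecture, via Strategy~\ref{main strategy} and its generalizations, to a purely algebraic statement about the Koszul DGAs $\mathcal{K}(n,n)$, and then to prove that statement by establishing a matching recursion on ranks. First I would observe that at primes $p > n+1$ the floor $\lfloor np/(p-1)\rfloor$ equals $n$, so Lie-May spectral sequence~\ref{may 1 ss} has input $H^{*,*,*}(\mathcal{K}(n,n))$ tensored with an empty polynomial algebra, and both~\ref{may 1 ss} and~\ref{may 2 ss} are expected to collapse for $n < p-1$ as indicated in Section~6.3 of~\cite{MR860042}. Modulo this collapse (essentially already built into the hypothesis ``$p > n+1$'' of the conjecture), the conjecture reduces to
\[\dim_{\mathbb{F}_p} H^{*,*,*}(\mathcal{K}(n,n)) = 2^n a_n,\]
where $a_n$ is the $n$th Taylor coefficient of $A(t)$.

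The main step would be to establish a recursion on $D_n := \dim_{\mathbb{F}_p} H^{*,*,*}(\mathcal{K}(n,n))$ that matches the functional equation $A(t) = \sum_{n\geq 0} \frac{t^n}{n!} \prod_{k=1}^n A(kt)$. The product $\prod_{k=1}^n A(kt)$ is suggestive of a decomposition in which each ``level'' of Koszul generators $\{h_{k,j} : 0 \leq j < n\}$ contributes a tensor factor whose rank is governed by a rescaled copy of $A$. Concretely, I would try to construct a filtration of $\mathcal{K}(n,n)$ whose $k$th subquotient is, up to an explicit shift, a tensor product of $\mathcal{K}(k,k)$ with an exterior algebra on a controlled number of classes, then run the associated Cartan-Eilenberg spectral sequences and argue that at large primes they collapse. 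The $t^n/n!$ factor I would expect to arise from the $C_n$-symmetry of $\mathcal{K}(n,n)$, perhaps as a PBW-type combinatorial accounting of orbits of monomials. As a complementary approach I would generalize the ``height-doubling'' step of Strategy~\ref{main strategy} to a ``height-multiplying by $k$'' step, using a totally ramified extension $\hat{\mathbb{Z}}_p\bigl[\sqrt[k]{p}\bigr]/\hat{\mathbb{Z}}_p$ of degree $k$ in place of $\hat{\mathbb{Z}}_p\bigl[\sqrt{p}\bigr]$, which should produce relations of the shape $D_{kn} = 2^{(k-1)n^2}\cdot F(D_n)$ for an explicit $F$.

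The hardest obstacle is that no height-multiplying strategy can reach the prime heights $n = 5, 7, 11, \ldots$ from strictly smaller heights, yet the conjecture predicts specific ranks at those heights. Thus a genuinely ``additive'' step from $n$ to $n+1$ is required, and Proposition~\ref{height of underlying fgl} shows that formal $A$-modules for $[K:\mathbb{Q}_p] = d$ always give $p$-height divisible by $d$, so one cannot simply find an extension of $\hat{\mathbb{Z}}_p$ that upgrades the height by one. A plausible workaround is to find a filtration of $\mathcal{K}(n+1,n+1)$ whose associated graded is expressible in terms of $\mathcal{K}(n,n)$ and explicit extra pieces, with no formal-module interpretation required. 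A further obstacle is that the Taylor coefficients of $A(t)$ are defined by an implicit functional equation rather than by an explicit formula, so matching the cohomological recursion to the functional equation will likely require either solving the cohomological recursion in closed form or finding a bijective or species-theoretic interpretation of both sides. Given these difficulties I expect the conjecture to require substantial new ideas beyond those in the present paper, and I would not be surprised if a proof ultimately reveals an unexpected Lie-theoretic or combinatorial structure on the Koszul complexes $\mathcal{K}(n,n)$.
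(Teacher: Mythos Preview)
The statement is a \emph{conjecture}, and the paper does not prove it; it is offered as an open problem motivated by the numerical coincidence for $n\leq 4$. There is therefore no paper proof to compare your proposal against, and your proposal is, by your own admission in its final paragraph, a research outline rather than a proof.

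Two concrete gaps are worth naming. First, your reduction step assumes that for $p>n+1$ both spectral sequences~\ref{may 1 ss} and~\ref{may 2 ss} collapse, giving $\dim_{\mathbb{F}_p}H^*(\strictAut(\mathbb{G}_{1/n});\mathbb{F}_p)=\dim_{\mathbb{F}_p}H^*(\mathcal{K}(n,n))$. But the paper is careful to say only that~\ref{may 2 ss} ``often'' collapses in this range, and explicitly notes that nonzero differentials are known at $n=9$, $p=11$, which satisfies $p>n+1$. So even the reduction to a purely algebraic statement about $\mathcal{K}(n,n)$ is not justified in general. Second, the obstacle you yourself flag is decisive: Proposition~\ref{height of underlying fgl} forces any formal-module height-shifting argument to multiply the height by an integer $d\geq 2$, so prime heights cannot be reached from below by the methods of this paper. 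Your proposed workaround, a filtration on $\mathcal{K}(n+1,n+1)$ with associated graded built from $\mathcal{K}(n,n)$, is not constructed here and there is no indication such a filtration exists; without it the recursion you want cannot even be formulated.

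In short, your outline correctly identifies the shape a proof would have to take and the principal obstruction, but it does not overcome that obstruction, and the conjecture remains open.
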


\bibliography{/home/asalch/texmf/tex/salch}{}
\bibliographystyle{plain}
\end{document}